\newtheorem{theorem}{Theorem}[section]
\newtheorem{definition}[theorem]{Definition}
\newtheorem{lemma}[theorem]{Lemma}
\newtheorem{proposition}[theorem]{Proposition}
\newtheorem{corol}[theorem]{Corollary}
\newtheorem{remark}[theorem]{Remark}
\newcommand{\rr}{\mathbb{R}}
\newcommand{\hh}{\mathbb{H}}
\newcommand{\pp}{\partial}
\newcommand{\unx}{\underline{x}}
\title{\bf  Formulations of the  F-functional calculus \\
and some consequences}
\author{
Fabrizio Colombo\\Politecnico di
Milano\\Dipartimento di Matematica\\Via E. Bonardi, 9\\20133 Milano,
Italy\\fabrizio.colombo@polimi.it
\and
Jonathan Gantner  \\
Vienna University of Technology\\
Institute for Analysis \\ and Scientific Computing\\
Wiedner Hauptstra\ss e 8 - 10\\
1040 Wien, Austria\\
jonathan.gantner@gmx.at
 }
\date{ }
\begin{document}

\maketitle

\begin{abstract}
In this paper we introduce the two possible formulations of the F-functional calculus which are based on the Fueter-Sce mapping theorem in integral form and we  introduce the pseudo F-resolvent equation. In the case of dimension $3$
we prove the F-resolvent equation
and  we study the analogue of the Riesz projectors associated with this calculus.
The case of dimension 3 is also useful to  study  the quaternionic version of the F-functional calculus.

\end{abstract}
\noindent AMS Classification: 47A10, 47A60.

\noindent {\em Key words}: Fueter-Sce mapping theorem in integral form, the F-spectrum, formulations  of the $F$-functional calculus for $n$-tuples of operators, projectors, quaternionic version of the $F$-functional calculus, quaternionic F-resolvent equation.

\section{Introduction}

Probably the most important functional calculus  for linear operators acting on a Banach space
is the Riesz-Dunford functional calculus, see \cite{ds}.
For the case of $n$-tuples of operators we quote the paper by Anderson \cite{anderson} who developed the Weyl functional calculus, and the work of J.L Taylor and M.E. Taylor  \cite{Taylors, Taylor, Taylor3, TaylorME} who defined a functional calculus for $n$-tuples of operators
using the theory of holomorphic functions of several variables.
Since then, the literature on this topic has been developed in different directions also using different notions of hyperholomorphicity.

In the past few years the theory of slice hyperholomorphic functions, see \cite{Global,CSTrend,DUKE,slicecss,ISRAEL2},
 has been the underline function theory on which  two new functional calculi for several operators and for quaternionic operators have been developed.
 These calculi are the $S$-functional calculus, \cite{acgs, NOTIONCONV, DUKE, SlaisComm, functionalcss} and the $F$-functional calculus, see \cite{NOTIONCONV, F_FUNC_UNB, CoSaSo}.
The first one is based on the Cauchy formula of the slice hyperholomorphic functions, see \cite{CSTrend, DUKE},
 and it applies to $n$-tuples of not necessarily commuting operators and to quaternionic operators.
In this last case the $S$-functional calculus is called quaternionic functional calculus.
 For an overview on slice hyperholomorphic functions and for the $S$-functional calculus  see the monograph \cite{bookfunctional}.
In the recent paper \cite{acgs}  it has been shown that, in order to have a full description of the
$S$-functional calculus we are in need of both the two formulations of this calculus,
 precisely the formulation based on the right $S$-resolvent operator but also the one based on the left $S$-resolvent operator. This is necessary because the $S$-resolvent equation involves both resolvent operators.
 This  equation is a key tool to study the Riesz projectors associated with the $S$-functional calculus.
 We point out that there exists a continuous version of the $S$-functional calculus that can be found in the paper \cite{GMP}.
  The aim of this work is to introduce the formulations of the $F$-functional calculus and
  to  extend some of the results proved in \cite{acgs} to the $F$-functional calculus. This calculus is based on the integral version of the Fueter-Sce mapping theorem.

The Fueter mapping theorem is one of the deepest results in hypercomplex analysis, see \cite{fueter 1}, it gives a  procedure to generate Cauchy-Fueter regular functions starting from holomorphic functions of a complex variable.
In the case of  Clifford algebra valued functions, see \cite{bds,csss,DSS,ghs},  the proof of the analogue of the Fueter mapping theorem  is due to Sce \cite{sce} for $n$ odd and to Qian \cite{qian} for the general case.
Later on, Fueter's theorem has been generalized to the  case in which a function $f$ is multiplied by a monogenic homogeneous polynomial of degree $k$, see \cite{kqs,penapena,pqs,sommen1} and to the case in which the function $f$ is defined on an open set $U$ not necessarily chosen in the upper complex plane, see \cite{qian,Qian3,qian2}.

More recently the problem of the inversion of Fueter-Sce mapping theorem  as been investigated in a series of papers, see \cite{INVER, INVERSEPK, biaxially}.

The $F$-functional calculus is based on the Fueter-Sce mapping theorem in integral form,
this is an integral transform obtained applying suitable powers of the Laplace operator to the
Cauchy kernel of slice hyperholomorphic functions.

The  case of left slice monogenic functions  has already been studied in the paper \cite{CoSaSo}, where the $F$-functional calculus has been introduced.

We observe that to study the Riesz projectors, in the classical case, we are in need of the resolvent equation
\begin{equation}
(\lambda I-A)^{-1}(\mu I-A)^{-1}=\frac{(\lambda I-A)^{-1}-(\mu I-A)^{-1}}{\mu-\lambda},\ \ \ \lambda, \ \mu\in \mathbb{C}\setminus \sigma(A),
\end{equation}
where $A$ is a complex operator on a complex Banach space.

In fact, to study the classical Riesz projectors, we use the fact that the product of the resolvent operators
$(\lambda I-A)^{-1}(\mu I-A)^{-1}$ can be written in terms of the difference $(\lambda I-A)^{-1}-(\mu I-A)^{-1}$ multiplied by the Cauchy kernel of holomorphic functions. As a consequence of the holomorphicity on can prove that
$$
P_\Omega=\int_{\partial \Omega}(\lambda I-A)^{-1}d\lambda,
$$
where $\Omega$ contains part of the spectrum of $A$, is a projector, that is $P^2_\Omega=P_\Omega$.

In the case of the $S$-functional calculus (and in particular its commutative version the $SC$-functional calculus) we can follow the same strategy but in this case the $SC$-resolvent equation
contains both the $SC$-resolvent operators as it has been recently shown in \cite{acgs}.

Precisely, we denote by $T$ the paravector operator defined by $T=T_0+e_1T_1+...+e_nT_n$ where $T_j$, $j=0,1,...,n$, are real bounded operators,
 commuting among themselves, acting on a real Banach space and $e_j$, $j=1,...,n$, are the units of the real Clifford algebra $\mathbb{R}_n$.
The $F$-spectrum of $T$ as is defined as
$$
\sigma_{F}(T)=\{ s\in \mathbb{R}^{n+1}\ \ :\ \ \ \ \ s^2\mathcal{I}-s(T+\overline{T})+T\overline{T}\ \ \
{\rm is\ not\  invertible}\},
$$
where we have set $\overline{T}:=T_0-e_1T_1-...-e_nT_n$. The left $SC$-resolvent operator is defined as
\begin{equation}
S_{C,L}^{-1}(s,T):=(s\mathcal{I}- \overline{T})(s^2\mathcal{I}-s(T+\overline{T})+T\overline{T})^{-1},\ \ \ s\not\in\sigma_{F}(T)
\end{equation}
and  the right $SC$-resolvent operator is
\begin{equation}
S_{C,R}^{-1}(s,T):=(s^2\mathcal{I}-s(T+\overline{T})+T\overline{T})^{-1}(s\mathcal{I}- \overline{T}),\ \ \ s\not\in\sigma_{F}(T).
\end{equation}
In this case, for $s$, $p\not\in \sigma_{F}(T)$,  the $SC$-resolvent equation is:
\begin{equation}
\begin{split}
S_{C,R}^{-1}(s,T)&S_{C,L}^{-1}(p,T)=((S_{C,R}^{-1}(s,T)-S_{C}^{-1}(p,T))p
\\
&
-\overline{s}(S_{C,R}^{-1}(s,T)-S_{C}^{-1}(p,T)))(p^2-2s_0p+|s|^2)^{-1},
\end{split}
\end{equation}
where $p$ and $s$ are paravectors, that is $s=s_0+s_1e_1+...+s_ne_n$ and $|s|^2=s_0^2+s_1^2+...+s_n^2$.
This equation is the key tool to study the Riesz projectors for the $S$-functional calculus and in particular for its commutative version the $SC$-functional calculus.
In the case of the $F$-functional calculus,  let  $n$ be an odd number,
 we define the left $F$-resolvent operator as
\begin{equation}
F_n^L(s,T):=\gamma_n(s\mathcal{I}-\overline{ T})(s^2\mathcal{I}-s(T+\overline{T})+T\overline{T})^{-\frac{n+1}{2}}
,\ \ \ s\not\in\sigma_{F}(T)
\end{equation}
and the right $F$-resolvent operator as
\begin{equation}
F_n^R(s,T):=\gamma_n(s^2\mathcal{I}-s(T+\overline{T})+T\overline{T})^{-\frac{n+1}{2}}(s\mathcal{I}-\overline{ T}),\ \ \ s\not\in\sigma_{F}(T),
\end{equation}
where $\gamma_n$ are suitable constants.
In this case we cannot expect a $F$-resolvent equation as in the case of the Riesz-Dunford functional calculus or  as in the case of the SC-functional calculus because the $F$-functional calculus in based on an integral transform and not on a Cauchy formula.

At the moment we are able to show that, at lest in the case $n=3$, there is a $F$-resolvent equation but it contains,
besides the two $F$-resolvent operators, also the two $SC$-resolvent  operators.
Precisely, for $s$, $p\not\in \sigma_{F}(T)$, it is
\[
\begin{split}
 &F_3^R(s,T)S_{C,L}^{-1}(p,T)+S_{C,R}^{-1}(s,T)F_3^L(p,T)+\gamma_3^{-1}\Big( sF_3^R(s,T)F_3^L(p,T)p
 \\
& -sF_3^R(s,T)TF_3^L(p,T)-F_3^R(s,T)TF_3^L(p,T)p+F_3^R(s,T)T^2F_3^L(p,T)\Big)
 \\
 &
 = \left[ \left( F_3^R(s,T) - F_3^L(p,T)\right)p-\bar{s}\left( F_3^R(s,T) - F_3^L(p,T)\right) \right](p^2 - 2s_0p + |s|^2)^{-1}.
\end{split}
\]
Even though it is more complicated than the $SC$-resolvent equation it is the correct tool to study the analogue of the Riesz projectors for the $F$-functional calculus.
Moreover, the case $n=3$ allows the study in all the details the quaternionic version of the $F$-functional calculus.

We conclude by observing  that important applications of the quaternionic functional calculus and of its $S$-resolvent operators
 can be found in Schur analysis  in the slice hyperholomorphic setting,  see \cite{ acs1, acs2, acs3},  and see \cite{MR2002b:47144} for the classical case. This a very active field that has started a few years ago.

\section{Preliminaries on slice monogenic functions}

In this section we recall some results on slice monogenic functions which will be useful in the sequel,
we refer the reader to \cite{bookfunctional} for more details.

The setting in which we will work is the real Clifford algebra $\rr_n$ over $n$ imaginary units
$e_1,\dots,e_n$ satisfying the relations $e_ie_j+e_je_i=-2\delta_{ij}$. An element in the
Clifford algebra will be denoted by $\sum_A e_Ax_A$ where
$A=i_1\ldots i_r$, $i_\ell\in \{1,2,\ldots, n\}$, $i_1<\ldots <i_r$, is a multi-index,
$e_A=e_{i_1} e_{i_2}\ldots e_{i_r}$ and $e_\emptyset =1$. When $n=1$, we have that $\rr_1$ is the algebra of complex numbers $\mathbb{C}$ (the
only case in which the Clifford algebra is commutative),
while when $n=2$ we obtain the division algebra of real quaternions $\mathbb{H}$. As it is well known,
for $n>2$, the Clifford algebras $\rr_n$ have zero divisors.

In the Clifford algebra $\rr_n$, we can identify some specific elements with the vectors in the Euclidean space $\rr^n$:
an element $(x_1,x_2,\ldots,x_n)\in\rr^n$ can be identified
with a so called 1-vector in the Clifford algebra through the map $(x_1,x_2,\ldots,x_n)\mapsto \unx=
x_1e_1+\ldots+x_ne_n$.
\par\noindent
An element $(x_0,x_1,\ldots,x_n)\in \rr^{n+1}$  will be identified with the element
 $
 x=x_0+\unx=x_0+ \sum_{j=1}^nx_je_j
 $
called, in short, paravector.
The norm of $x\in\rr^{n+1}$ is defined as $|x|^2=x_0^2+x_1^2+\ldots +x_n^2$. The real part  $x_0$ of $x$ will be also denoted by ${\rm Re} (x)$.
A function $f:\ U\subseteq \rr^{n+1}\to\rr_n$ is seen as
a function $f(x)$ of $x$ (and similarly for a function $f(\unx)$ of $\unx\in U\subset \rr^n$).
We will denote by $\mathbb{S}$ the sphere of unit 1-vectors in $\mathbb{R}^n$, i.e.
$$
\mathbb{S}=\{ \unx=e_1x_1+\ldots +e_nx_n\ :\  x_1^2+\ldots +x_n^2=1\}.
$$

\begin{definition}
\label{defsmon}
Let $U\subseteq\mathbb{R}^{n+1}$ be an open set and let
$f: U\to\mathbb{R}_n$ be a real differentiable function. Let
$I\in\mathbb{S}$ and let $f_I$ be the restriction of $f$ to the
complex plane $\mathbb{C}_I$ and denote by $u+Iv$ an element on $\mathbb{C}_I$.
We say that $f$ is a left slice monogenic (for short {\em  s-monogenic}) function if, for every
$I\in\mathbb{S}$, we have
$$
\frac{1}{2}\left(\frac{\partial }{\partial u}f_I(u+Iv)+I\frac{\partial
}{\partial v}f_I(u+Iv)\right)=0
$$
on $U\cap \mathbb{C}_I$.
We will denote by $\mathcal{SM}(U)$ the set of left slice monogenic functions on the open set $U$ or by $\mathcal{SM}^L(U)$ when confusion may arise.
We say that $f$ is a right slice monogenic (for short {\em  right s-monogenic}) function if,
for every
$I\in\mathbb{S}$, we have
$$
\frac{1}{2}\left(\frac{\partial }{\partial u}f_I(u+Iv)+\frac{\partial
}{\partial v}f_I(u+Iv)I\right)=0,
$$
on $U\cap \mathbb{C}_I$.
We will denote by $\mathcal{SM}^R(U)$ the set of right slice monogenic functions on the open set $U$.
\end{definition}

\begin{definition}\label{sphere}
Given an element $x\in\rr^{n+1}$, we define
$$
[x]=\{y\in\rr^{n+1}\ :\ y={\rm Re}(x)+I |\underline{x}|, \ \ I\in \mathbb{S}\}.
$$
\end{definition}

 The set $[x]$ is a $(n-1)$-dimensional sphere in $\rr^{n+1}$. When $x\in\rr$, then $[x]$ contains $x$ only. In this case, the
$(n-1)$-dimensional sphere has radius equal to zero.
The domains on which slice hyperholomorphic functions have a Cauchy formula are the so called slice domains and axially symmetric domains.
\begin{definition}[Slice domains and axially symmetric domains]
Slice domains: Let $U \subseteq \mathbb{R}^{n+1}$ be a domain. We
say that $U$ is a \textnormal{slice domain (s-domain for short)}
\index{s-domain} if $U\cap \mathbb{R}$ is nonempty and if
$\mathbb{C}_I\cap U$ is a domain in $\mathbb{C}_I$ for all $I \in \mathbb{S}$.
\\
Let $U \subseteq \mathbb{R}^{n+1}$. We say that $U$ is
\textnormal{axially symmetric} if, for all $u+Iv \in U$, the whole
$(n-1)$-sphere $[u+Iv]$ is contained in $U$.
\end{definition}

 It is important to point out that a key tool in our theory is the Cauchy formula for slice monogenic functions.  If
 $x=x_0+e_1x_1+...+e_nx_n$, $s=s_0+e_1s_1+...+e_ns_n$ are paravectors in $\rr^{n+1}$,
 then we have the following facts.
 \begin{proposition}
Suppose that $x$ and $s\in\rr^{n+1}$ are such that $x\not\in [s]$.
The following identity holds:
\begin{equation}\label{second}
-(x^2 -2x {\rm Re} (s)+|s|^2)^{-1}(x-\overline s)=(s-\bar x)(s^2-2{\rm
Re}(x)s+|x|^2)^{-1}.
\end{equation}
\end{proposition}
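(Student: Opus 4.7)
Set $A := x^2 - 2x\,\mathrm{Re}(s) + |s|^2$ and $B := s^2 - 2\,\mathrm{Re}(x)\,s + |x|^2$, so the claim is $-A^{-1}(x - \bar s) = (s - \bar x)B^{-1}$. My first step is to verify that, under the hypothesis $x\notin[s]$, both $A$ and $B$ are invertible in $\mathbb{R}_n$. Using that $2\mathrm{Re}(s) = s + \bar s$ is real and hence central, and writing $x = x_0 + \underline x$, $s = s_0 + \underline s$, a direct expansion yields
\begin{equation*}
A = \bigl((x_0-s_0)^2 + |\underline s|^2 - |\underline x|^2\bigr) + 2(x_0 - s_0)\,\underline x,
\end{equation*}
so $A$ is itself a paravector. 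Its squared paravector norm vanishes exactly when $x_0 = s_0$ and $|\underline x| = |\underline s|$, which is precisely the condition $x \in [s]$; the analogous statement for $B$ follows from the symmetric expression. Hence both $A$ and $B$ are nonzero paravectors, and every nonzero paravector possesses a (paravector) inverse.

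The second step is to reduce the identity to a manifestly polynomial one. After left-multiplying by $-A$ and right-multiplying by $B$, the claim becomes
\begin{equation*}
A(\bar x - s) = (x - \bar s)\,B.
\end{equation*}
I would verify this by expanding both sides and comparing term by term. The tools at my disposal are: $T := x + \bar x$ and $U := s + \bar s$ are real and hence central; $x\bar x = |x|^2$ and $s\bar s = |s|^2$ are real; and the characteristic identities $x^2 = Tx - |x|^2$ and $s^2 = Us - |s|^2$ hold. Substituting $x^2 = Tx - |x|^2$ into the left-hand side kills the combination $x^2\bar x - |x|^2 x$, while the cross-terms $-x^2 s$, $-xs^2$, $Uxs$, $Txs$ arising on both sides combine using the same characteristic identities to produce only $|x|^2 s + |s|^2 x$.

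The main, and really the only, obstacle is the bookkeeping, since $xs \neq sx$ in general and one must carefully preserve the order of Clifford multiplication throughout. The efficient strategy is to isolate the non-commuting product $xs$ in both expansions and check that its coefficients cancel; after that, everything else consists of scalar coefficients times one of $x,\bar x,s,\bar s$ and can be freely rearranged. Collecting the remaining terms then gives $|x|^2(s + \bar s) - U|x|^2 + |s|^2(x + \bar x) - T|s|^2 = 0$, which completes the verification.
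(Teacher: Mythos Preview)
Your argument is correct. The reduction to the polynomial identity $A(\bar x - s) = (x - \bar s)B$ is legitimate once $A$ and $B$ are known to be invertible, and your invertibility check is accurate: expanding $A$ as a paravector and computing $|A|^2 = \bigl((x_0-s_0)^2 + |\underline s|^2 - |\underline x|^2\bigr)^2 + 4(x_0-s_0)^2|\underline x|^2$ shows that $|A|^2 = 0$ forces $x_0 = s_0$ and $|\underline x| = |\underline s|$, i.e.\ $x\in[s]$. The expansion of the polynomial identity goes through exactly as you outline: after using $x^2 = Tx - |x|^2$ and $s^2 = Us - |s|^2$, the only genuinely noncommuting contribution is $xs$, which appears on both sides with coefficient $2s_0 - 2x_0$, and the remaining terms are real scalars times single paravectors and cancel pairwise via $x+\bar x = T$, $s+\bar s = U$.

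As for comparison with the paper: the paper states this proposition in the preliminaries section without proof, treating it as a known identity from the literature on slice monogenic functions (the reader is referred to the monograph cited as \cite{bookfunctional}). So there is no proof in the paper to compare your approach against; your direct algebraic verification is a perfectly good self-contained argument.
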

This fact justifies the following definition.
\begin{definition}
Let $x$, $s\in \rr^{n+1}$
such that $x\not\in [s]$.
\begin{itemize}
\item
We say that  $S_L^{-1}(s,x)$ is written in the form I if
$$
S_L^{-1}(s,x):=-(x^2 -2x {\rm Re} (s)+|s|^2)^{-1}(x-\overline s).
$$
\item
We say that $S_L^{-1}(s,x)$ is written in the form II if
$$
S_L^{-1}(s,x):=(s-\bar x)(s^2-2{\rm Re}(x) s+|x|^2)^{-1}.
$$
\end{itemize}
\end{definition}

\begin{proposition}
Suppose that $x$ and $s\in\rr^{n+1}$ are such that $x\not\in [s]$.
The following identity holds:
\begin{equation}\label{third}
 (s^2-2{\rm Re}(x)s+|x|^2)^{-1}(s-\bar x)=-(x-\bar s)(x^2-2{\rm Re}(s)x+|s|^2)^{-1} .
\end{equation}
\end{proposition}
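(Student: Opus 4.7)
The plan is to derive identity \eqref{third} from identity \eqref{second} by applying the Clifford conjugation $a\mapsto \overline a$, which is an $\rr$-linear anti-involution on $\rr_n$: it reverses products, $\overline{ab}=\overline b\,\overline a$, is an involution, $\overline{\overline a}=a$, and fixes the real subalgebra. Since the quantities $\mathrm{Re}(s)$, $\mathrm{Re}(x)$, $|s|^2$, $|x|^2$ are all real (hence central and self-conjugate), the quadratic factors appearing in \eqref{second} behave predictably under conjugation, and the two kernels simply swap sides.

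Concretely, I would first conjugate both sides of \eqref{second}. Using $\overline{A^{-1}}=\overline A^{-1}$ (which is valid since conjugation sends invertibles to invertibles), the left-hand side becomes
\[
-(\overline x - s)\bigl(\overline x^{\,2}-2\mathrm{Re}(s)\overline x + |s|^2\bigr)^{-1},
\]
while the right-hand side becomes
\[
\bigl(\overline s^{\,2}-2\mathrm{Re}(x)\overline s + |x|^2\bigr)^{-1}(\overline s - x).
\]
Next I would substitute $x\mapsto \overline x$ and $s\mapsto \overline s$. This is legitimate because conjugation is a bijection on paravectors and because $[\overline x]=[x]$, $[\overline s]=[s]$, so the standing hypothesis $x\notin[s]$ is preserved. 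Using $\mathrm{Re}(\overline x)=\mathrm{Re}(x)$ and $|\overline x|^2=|x|^2$ (and the analogue for $s$), the resulting formula is exactly \eqref{third} after a trivial sign rearrangement.

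The only subtle point to check — and this is really minor — is that the transformed quadratic factors are in fact invertible, i.e.\ that \eqref{second} implies invertibility of $\overline x^{\,2}-2\mathrm{Re}(s)\overline x + |s|^2$ and $\overline s^{\,2}-2\mathrm{Re}(x)\overline s + |x|^2$. But this is immediate from
\[
\overline{x^2-2x\mathrm{Re}(s)+|s|^2}=\overline x^{\,2}-2\mathrm{Re}(s)\overline x + |s|^2,
\]
together with the fact that anti-automorphisms send invertible elements to invertible elements.

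An equivalent alternative, should the conjugation route be considered too implicit, is to cross-multiply the desired identity and verify the purely algebraic relation
\[
(s-\overline x)\bigl(x^2-2\mathrm{Re}(s)x+|s|^2\bigr)=-\bigl(s^2-2\mathrm{Re}(x)s+|x|^2\bigr)(x-\overline s)
\]
directly, expanding both sides and using $s+\overline s=2\mathrm{Re}(s)$, $s\overline s=|s|^2$ (and analogously for $x$), together with the centrality of real scalars in $\rr_n$. The main obstacle on this second route is purely bookkeeping: since $s$ and $x$ do not commute in $\rr_n$ for $n\geq 2$, one must be careful about the order of every factor. The conjugation approach avoids this entirely and is the one I would follow.
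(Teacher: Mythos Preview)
Your conjugation argument is correct: applying the Clifford conjugation to \eqref{second} and then substituting $x\mapsto\bar x$, $s\mapsto\bar s$ yields \eqref{third} exactly as you describe, and the hypothesis $x\notin[s]$ is indeed preserved because $[\bar s]=[s]$ and $\bar x\in[x]$. The alternative cross-multiplication check is also valid.

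However, there is nothing to compare against: the paper states this proposition in its preliminaries section (Section~2) without proof, referring the reader to the monograph \cite{bookfunctional} for details on slice monogenic functions. So your proposal is not so much a different route from the paper's as it is a proof where the paper gives none. Your conjugation trick, deducing \eqref{third} from \eqref{second} by symmetry rather than repeating an independent computation, is a clean way to present it.
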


This fact justifies the following definition.
\begin{definition}
Let $x$, $s\in \rr^{n+1}$
such that $x\not\in [s]$.
\begin{itemize}
\item
We say that  $S_R^{-1}(s,x)$ is written in the form I if
$$
S_R^{-1}(s,x):=-(x-\bar s)(x^2-2{\rm Re}(s)x+|s|^2)^{-1} .
$$
\item
We say that $S_R^{-1}(s,x)$ is written in the form II if
$$
S_R^{-1}(s,x):=(s^2-2{\rm Re}(x)s+|x|^2)^{-1}(s-\bar x).
$$
\end{itemize}
\end{definition}

\begin{theorem}[The Cauchy formula]
\label{Cauchygenerale}
Let $U\subset\mathbb{R}^{n+1}$ be an axially symmetric s-domain.
Suppose that $\partial (U\cap \mathbb{C}_I)$ is a finite union of
continuously differentiable Jordan curves  for every $I\in\mathbb{S}$.  Set  $ds_I=-ds I$ for $I\in \mathbb{S}$.
If $f$ is
a (left) slice monogenic function on a set that contains $\overline{U}$ then
\begin{equation}\label{cauchynuovo}
 f(x)=\frac{1}{2 \pi}\int_{\partial (U\cap \mathbb{C}_I)} S_L^{-1}(s,x)ds_I f(s)
\end{equation}
 and the
integral  depends neither on $U$ and nor on the imaginary unit
$I\in\mathbb{S}$.
\noindent
If $f$ is a right slice monogenic function on a set that contains $\overline{U}$,
then
\begin{equation}\label{Cauchyright}
 f(x)=\frac{1}{2 \pi}\int_{\partial (U\cap \mathbb{C}_I)}  f(s)ds_I S_R^{-1}(s,x)
 \end{equation}
and the integral  depends neither on $U$ and nor on the imaginary unit $I\in\mathbb{S}$.
\end{theorem}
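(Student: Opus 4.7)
The plan is to reduce the formula to the classical complex Cauchy integral formula on a slice $\mathbb{C}_I$, then extend to arbitrary $x\in U$ by using that axially symmetric s-domains are determined by their intersection with any one slice. First, fix $I\in\mathbb{S}$ and take $x = u_0 + I v_0 \in U \cap \mathbb{C}_I$. By Definition \ref{defsmon}, the restriction $f_I$ is holomorphic on $U\cap\mathbb{C}_I$ with respect to the complex structure induced by $I$, and $\partial(U\cap\mathbb{C}_I)$ is by assumption a finite union of smooth Jordan curves bounding $U\cap\mathbb{C}_I$. The classical Cauchy formula on $\mathbb{C}_I$ (which is a field isomorphic to $\mathbb{C}$) gives
\[
 f(x)=\frac{1}{2\pi I}\int_{\partial(U\cap\mathbb{C}_I)}(s-x)^{-1} f(s)\, ds.
\]
Since $ds_I=-ds\,I$ and $I^{-1}=-I$, we have $\frac{1}{2\pi I}ds=\frac{1}{2\pi}ds_I$. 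When $x,s\in\mathbb{C}_I$ they commute, and a direct factorization $s^2-2s\,\mathrm{Re}(x)+|x|^2=(s-x)(s-\bar x)$ in $\mathbb{C}_I$ shows that the form II of $S_L^{-1}(s,x)$ collapses to $(s-x)^{-1}$. This settles the formula when $x\in\mathbb{C}_I$.

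Next, for general $x=x_0+\underline{x}\in U$ with $\underline{x}\neq 0$, the natural choice is $J_x=\underline{x}/|\underline{x}|\in\mathbb{S}$, so $x\in\mathbb{C}_{J_x}$, and the previous step already gives the formula with $I$ replaced by $J_x$. To pass to an arbitrary $I\in\mathbb{S}$, I would invoke the structure/representation formula for slice monogenic functions together with the fact that the kernel $S_L^{-1}(s,x)$ is right slice monogenic in $s$ on $\mathbb{R}^{n+1}\setminus[x]$ (both forms I and II being algebraic manifestations of the same slice hyperholomorphic function). The idea is to apply the representation formula to the integrand as a function of $s$ on the axially symmetric set $\overline U$: the resulting expression is independent of the slice on which the integral is computed, so the value of the integral for arbitrary $I\in\mathbb{S}$ coincides with its value for $I=J_x$, which we have identified with $f(x)$.

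Independence of $U$ then follows by a standard Cauchy-theorem argument applied on each slice: if $U$ and $U'$ are two admissible axially symmetric s-domains with $\overline{U\cup U'}$ contained in the domain of $f$, both $S_L^{-1}(\cdot,x)$ and $f$ are slice monogenic on $\overline U\cup\overline{U'}$ off $[x]$, so on $\mathbb{C}_I$ the one-form $S_L^{-1}(s,x)ds_I f(s)$ is closed and the two integrals agree. The right slice monogenic case is entirely symmetric: one applies the classical Cauchy formula to $f_I$ on the right, uses the analogous reduction of $S_R^{-1}(s,x)$ to $(s-x)^{-1}$ on $\mathbb{C}_I$, and then extends by the representation formula.

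The main obstacle is the extension step: unlike the classical case, when $x\notin\mathbb{C}_I$ the kernel $S_L^{-1}(s,x)$ does not simplify to $(s-x)^{-1}$, and the restriction $f_I$ no longer ``sees'' the value $f(x)$ directly. Bridging this gap requires the interplay between the two algebraic forms of the Cauchy kernel (via identities \eqref{second} and \eqref{third}) and the representation formula that reconstructs $f$ on $[x]$ from its values on any single slice; making this reduction precise is where the technical work lies.
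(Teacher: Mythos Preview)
The paper does not supply a proof of this theorem; it is recalled from the literature (Section~2 opens with ``we refer the reader to \cite{bookfunctional} for more details''). So there is no in-paper argument to compare against, and I can only assess your outline on its own.

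Your on-slice reduction is fine: for $x,s\in\mathbb{C}_I$ the kernel collapses to $(s-x)^{-1}$ and the vector-valued Cauchy formula on $\mathbb{C}_I$ applies to the $\mathbb{R}_n$-valued function $f_I$ (this is worth saying explicitly, since $f_I$ is not $\mathbb{C}_I$-valued). The gap is in the extension. ``Applying the representation formula to the integrand as a function of $s$'' is not a well-posed operation: the integrand is a $1$-form on a curve in $\mathbb{C}_I$, and changing $I$ changes both the measure $ds_I$ and the contour, so there is no function of $s$ on an axially symmetric set to which Theorem~\ref{formulamon} can be applied directly. You correctly flag this as the hard step, but as written the argument does not go through.

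The clean way to close the gap is to reverse the order: fix an arbitrary $I\in\mathbb{S}$ from the outset, write $x=u+I_xv$, and use the representation formula for $f$ (not for the integrand) to express $f(x)$ in terms of $f(u+Iv)$ and $f(u-Iv)$. Apply your on-slice Cauchy formula to each of these two values (both lie in $U\cap\mathbb{C}_I$), and then check by direct computation that
\[
\tfrac{1}{2}(1-I_xI)\,(s-(u+Iv))^{-1}+\tfrac{1}{2}(1+I_xI)\,(s-(u-Iv))^{-1}=S_L^{-1}(s,x)
\]
for $s\in\mathbb{C}_I$. This yields \eqref{cauchynuovo} for every $I$ simultaneously, so independence of $I$ is automatic, and independence of $U$ then follows by the slice-wise Cauchy theorem exactly as you describe. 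The right case is symmetric.
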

The deepest property of of slice monogenic functions on axially symmetric slice domains is the  Representation Formula
(also called Structure Formula, see \cite{CSTrend}).
\begin{theorem}[Representation Formula]\label{formulamon}
\index{Representation Formula}
Let $U\subseteq
\mathbb{R}^{n+1}$ be an axially symmetric s-domain.
\begin{itemize}
\item
Let $f\in \mathcal{SM}^L(U)$.
 Then, for any  vector
$x =u+I_x v\in U$, we have
\begin{equation}\label{distributionmon}
f(x)=\frac{1}{2}\Big[1 -  I_x I\Big]f(u +I v)+\frac{1}{2}\Big[1 +  I_x I\Big]f(u -I v)
\end{equation}
and
\begin{equation}\label{distributionmon1}
f(x)=\frac{1}{2}\Big[   f(u +I v)+f(u -I v) +I_x
I[f(u -I v)-f(u +I v)]\Big].
\end{equation}
\item
Let $f\in \mathcal{SM}^R(U)$.
 Then, for any  vector
$x =u+I_x v\in U$, we have
\begin{equation}\label{distributionmonRG}
f(x)=\frac{1}{2}\Big[1 -  II_x\Big]f(u +I v)+\frac{1}{2}\Big[1 +  II_x\Big]f(u -I v)
\end{equation}
and
\begin{equation}\label{distributionmon1RG}
f(x) =\frac{1}{2}\Big[   f(u+Iv)+f(u-Iv)\Big]+\frac{1}{2}\Big[ [f(u-Iv)-f(u+Iv)]II_x\Big].
\end{equation}
\end{itemize}
\end{theorem}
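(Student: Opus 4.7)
The strategy is to derive the representation formula directly from the Cauchy formula (\ref{cauchynuovo}), exploiting the specific factorisation of the left $S$-kernel in its \emph{Form II}. Given $x\in U$ with $x=u+I_xv$, pick an axially symmetric s-domain $U'$ with $x\in U'$, $\overline{U'}\subset U$, and boundary $\partial(U'\cap\mathbb{C}_I)$ a finite union of $C^1$ Jordan curves for each $I\in\mathbb{S}$; by axial symmetry of $U'$ the points $u\pm Iv$ also lie in $U'$. Theorem~\ref{Cauchygenerale} then yields
\[
f(x) \;=\; \frac{1}{2\pi}\int_{\partial(U'\cap\mathbb{C}_I)} S_L^{-1}(s,x)\,ds_I\,f(s).
\]

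The key observation is that in Form II,
\[
S_L^{-1}(s,x) \;=\; (s-\bar x)(s^2-2us+u^2+v^2)^{-1} \;=\; Q(s)+I_x\,R(s),
\]
where $u=\mathrm{Re}(x)$, $Q(s):=(s-u)(s^2-2us+u^2+v^2)^{-1}$, and $R(s):=v(s^2-2us+u^2+v^2)^{-1}$; indeed $(s-\bar x)=(s-u)+I_xv$, so the only $I_x$-dependence has been collected into a single linear factor on the left, while the quadratic denominator depends only on $u$ and on $u^2+v^2=|x|^2$, both of which are constant along the sphere $[x]$. Since $I_x$ is constant in $s$ it may be factored out of the Cauchy integral, giving
\[
f(x) \;=\; A(u,v) + I_x\,B(u,v),
\]
with $A(u,v):=\frac{1}{2\pi}\int Q(s)\,ds_I\,f(s)$ and $B(u,v):=\frac{1}{2\pi}\int R(s)\,ds_I\,f(s)$, both independent of $I_x$.

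Specializing the identity at $I_x=I$ and $I_x=-I$ (allowed because $u\pm Iv\in U'$) gives the linear system $A+IB=f(u+Iv)$, $A-IB=f(u-Iv)$, whence $A=\frac{1}{2}[f(u+Iv)+f(u-Iv)]$ and $B=\frac{I}{2}[f(u-Iv)-f(u+Iv)]$. Substituting back yields (\ref{distributionmon1}), and the algebraic rearrangement $\frac{1}{2}[1-I_xI]f(u+Iv)+\frac{1}{2}[1+I_xI]f(u-Iv) = \frac{1}{2}[f(u+Iv)+f(u-Iv)] + \frac{1}{2}I_xI[f(u-Iv)-f(u+Iv)]$ recovers (\ref{distributionmon}). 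The right slice monogenic case is handled in an entirely analogous way: in Form II the right kernel factors as $S_R^{-1}(s,x)=P(s)(s-u)+vP(s)\,I_x$, so the right Cauchy formula (\ref{Cauchyright}) produces $f(x)=A'+B'\,I_x$, and the same specialization argument recovers (\ref{distributionmonRG})--(\ref{distributionmon1RG}).

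The heart of the proof is the observation that in Form II the quadratic denominator depends on $x$ only through quantities constant on the sphere $[x]$, so $I_x$ appears only linearly (once on the left in $S_L^{-1}$ and once on the right in $S_R^{-1}$); this is precisely what allows it to be extracted from the integral. The only technical point is the restriction to a subdomain $U'$ with $\overline{U'}\subset U$ so that the Cauchy formula applies; this is routine for axially symmetric s-domains.
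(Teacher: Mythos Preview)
The paper does not prove this theorem: it is stated in the preliminaries (Section~2) as a known result, with the proof attributed to \cite{CSTrend} (and implicitly to \cite{bookfunctional}). There is therefore no ``paper's own proof'' to compare against.

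Your algebraic computation is correct: in Form~II the kernel indeed decomposes as $S_L^{-1}(s,x)=Q(s)+I_xR(s)$ with $Q,R$ taking values in $\mathbb{C}_I$ and independent of $I_x$, so the Cauchy integral splits as $f(x)=A(u,v)+I_xB(u,v)$, and specializing $I_x=\pm I$ solves for $A,B$. The difficulty is one of logical dependence. In the standard development of slice monogenic function theory---in particular in \cite{CSTrend} and in \cite{bookfunctional}, which are the references the paper invokes for both Theorem~\ref{Cauchygenerale} and Theorem~\ref{formulamon}---the general Cauchy formula for a point $x\notin\mathbb{C}_I$ is itself obtained \emph{from} the Representation Formula: one first proves the Cauchy formula for $x$ on the slice $\mathbb{C}_I$ via the Splitting Lemma and planar Cauchy theory, and then uses (\ref{distributionmon}) to extend it to arbitrary $x\in U$. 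Your argument therefore assumes what it sets out to prove.

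If you want a self-contained argument, you would need either to supply an independent proof of Theorem~\ref{Cauchygenerale} for off-slice $x$ (not straightforward), or to follow the original route of \cite{CSTrend}: use the Splitting Lemma to write $f_I=F+GJ$ with $F,G$ holomorphic on $U\cap\mathbb{C}_I$, define the right-hand side of (\ref{distributionmon}) as a candidate function on $U$, verify directly that it is left slice monogenic, and conclude by the identity principle on slices that it coincides with $f$.
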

As we will see in the sequel the Representation Formula shows that it is possible to apply the Fueter-Sce mapping theorem to slice monogenic functions to obtain monogenic functions, that is functions in the kernel of the Dirac operator.

\section{The Fueter-Sce mapping theorem in integral form}

For sake of completeness we recall the  notion of monogenic functions.
\begin{definition}[Monogenic functions] Let $U$ be an open set in $\mathbb{R}^{n+1}$.
A real differentiable function $f: U\to \mathbb{R}_n$ is left monogenic if
$$
\frac{\pp}{\pp x_0}f(x)+\sum_{i=1}^n e_i\frac{\pp}{\pp x_i}f(x)=0.
$$
It is right monogenic if
$$
\frac{\pp}{\pp x_0}f(x)+\sum_{i=1}^n \frac{\pp}{\pp x_i}f(x)e_i=0.
$$
\end{definition}
The Representation Formula shows that a slice monogenic functions   $f: U\subset \rr^{n+1}\to  \rr_n$ is of the form
$$
f(x+Iy)=\alpha(x,y)+I\beta (x,y),
$$
where $\alpha$ and $\beta$ are suitable  $\rr_n$--valued functions satisfying the Cauchy--Riemann system and $I$ is a $1$-vector in the Clifford algebra $\rr_n$ such that $I^2=-1$.
The Fueter-Sce theorem states that, for $n$ odd, if $f(x+Iy)=\alpha(x,y)+I\beta (x,y)$ is slice monogenic
then the function
$$
\breve{f}(x_0,|\underline{x}|)=
\Delta^{(n-1)/2}(\alpha(x_0,|\underline{x}|)+\frac{\underline{x}}{|\underline{x}|}\beta(x_0,|\underline{x}|))
$$
is monogenic, that is it is in the kernel of the Dirac operator. Here $\Delta$ is the Laplace operator in dimension $n+1$.
This means that using the Cauchy formula of the slice monogenic functions
we apply the operator $\Delta^{(n-1)/2}$ to the Cauchy kernels and we obtain
 an integral version of the Fueter-Sce mapping theorem.

The crucial point, as it has already been observed in \cite{F_FUNC_UNB}, is that we can get an elegant formula only when we apply the operator $\Delta^{(n-1)/2}$ to the Cauchy kernel in form II. This means that, when we define the $F$-functional calculus using this integral transform, we are restricted to the case of commuting operators.

\begin{theorem}\label{Laplacian_comp}
Let $x$,
$s\in \rr^{n+1}$ be such that $x\not\in [s]$ and let
 $\Delta=\sum_{i=0}^n\frac{\partial^2}{\partial x_i^2}$ be the Laplace operator in the variable $x$.
\begin{itemize}
\item[(a)]
Consider the left slice monogenic Cauchy kernel $S_L^{-1}(s,x)$ written in the form II, that is
$$
S_L^{-1}(s,x):=(s-\bar x)(s^2-2{\rm Re}(x) s+|x|^2)^{-1}.
$$
Then, for $h\geq 1$, we have:
\begin{equation}\label{hLaplacian}
\Delta^hS_L^{-1}(s,x)=(-1)^h\prod_{\ell=1}^h(2\ell) \prod_{\ell=1}^h (n-(2\ell -1))(s-\bar x)(s^2-2{\rm Re}[x]s +|x|^2)^{-(h+1)}.
\end{equation}
\item[(b)]
Consider the right slice monogenic Cauchy kernel $S_R^{-1}(s,x)$ written in the form II, that is
$$
S_R^{-1}(s,x):=(s^2-2{\rm Re}(x)s+|x|^2)^{-1}(s-\bar x).
$$
Then, for $h\geq 1$, we have:
\begin{equation}\label{hLaplacianR}
\Delta^hS_R^{-1}(s,x)=(-1)^h\prod_{\ell=1}^h(2\ell) \prod_{\ell=1}^h(n-(2\ell -1)) (s^2-2{\rm Re}[x]s +|x|^2)^{-(h+1)}(s-\bar x).
\end{equation}
\end{itemize}
\end{theorem}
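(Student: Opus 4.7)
The plan is to establish both (a) and (b) by induction on $h$, with the main calculational engine being the Leibniz identity
$$
\Delta(FG)=(\Delta F)G+2\sum_{i=0}^n(\partial_iF)(\partial_iG)+F(\Delta G),
$$
which holds coefficient-wise in $\mathbb{R}_n$ regardless of whether $F$ and $G$ commute. Throughout I would set $P:=s^2-2\mathrm{Re}(x)s+|x|^2$ and exploit that $P$ lies in the commutative subalgebra generated by $s$, so $P$ commutes with all of its real-variable derivatives.

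The first step is the auxiliary identity
$$
\Delta P^{-k}=-2k\bigl(n-2k-1\bigr)P^{-k-1},\qquad k\geq 1,
$$
which is a one-shot computation from $\partial_0P=-2(s-x_0)$ and $\partial_iP=2x_i$ for $i\geq 1$: the chain rule gives $\partial_i^2P^{-k}=k(k+1)P^{-k-2}(\partial_iP)^2-kP^{-k-1}\partial_i^2P$, and summing in $i$ the factor $(s-x_0)^2+|\underline{x}|^2$ collapses to $P$, yielding the stated coefficient. The second step is the gradient cross-term in the Leibniz rule applied to $F=s-\bar x$, $G=P^{-k}$. Using $\partial_0(s-\bar x)=-1$, $\partial_i(s-\bar x)=e_i$ for $i\geq 1$, together with $\partial_iP^{-k}=-2kx_iP^{-k-1}$ and $\partial_0P^{-k}=2k(s-x_0)P^{-k-1}$, one finds
$$
2\sum_{i=0}^n\partial_i(s-\bar x)\partial_iP^{-k}=-4k\Bigl[(s-x_0)+\sum_{i=1}^n x_ie_i\Bigr]P^{-k-1}=-4k(s-\bar x)P^{-k-1},
$$
where the essential point is that each $x_i\in\mathbb{R}$ slides freely past $e_i$. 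Since $s-\bar x$ is affine in $x$ we have $\Delta(s-\bar x)=0$, and combining with $(s-\bar x)\Delta P^{-k}$ produces the clean recursion
$$
\Delta\bigl[(s-\bar x)P^{-k}\bigr]=-2k\bigl(n-2k+1\bigr)(s-\bar x)P^{-k-1}.
$$

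Applying this with $k=1$ gives the $h=1$ case of (a), the coefficient $-2(n-1)$ matching $(-1)\cdot 2\cdot(n-1)$. For the inductive step, assuming the formula at $h$, a further application of $\Delta$ together with the recursion at $k=h+1$ introduces precisely the factor $-2(h+1)\bigl(n-(2(h+1)-1)\bigr)$, which is exactly the new term $(-1)\bigl(2(h+1)\bigr)\bigl(n-(2(h+1)-1)\bigr)$ needed to extend the product from $h$ to $h+1$; this closes (a). Part (b) is handled by the symmetric argument: the Leibniz rule with $G=s-\bar x$ on the right and $F=P^{-k}$ on the left produces the same cross-term coefficient because the reality of the $x_i$ makes the two orderings numerically equivalent, so the identical constants arise. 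The only real pitfall I foresee is the gradient cross term: one must resist the temptation to commute $P^{-k-1}$ past $\underline{x}$ (which would fail for $n\geq 2$), and instead observe that $\partial_iP^{-k}$ carries the purely real coefficient $-2kx_i$, so the $e_i$'s slip through without obstruction and $\underline{x}$ reassembles on the correct side in either ordering.
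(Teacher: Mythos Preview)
Your proof is correct and follows the same inductive strategy as the paper: both arguments establish the base case and then show that one more application of $\Delta$ multiplies the constant by exactly $-2(h+1)(n-(2h+1))$ while incrementing the exponent on $P^{-1}$. The only difference is organizational: the paper writes out each $\partial^2/\partial x_i^2$ of $G_h(s,x)=P^{-(h+1)}(s-\bar x)$ separately and then sums, whereas you package the same computation via the Leibniz identity, isolating $\Delta P^{-k}$ as a reusable lemma and handling the gradient cross-term in one line. Your treatment of the cross-term is exactly right---the reals $x_i$ commute through, so $\sum_i e_i(-2kx_i)P^{-k-1}=\underline{x}\,(-2k)P^{-k-1}$ (resp.\ $P^{-k-1}\underline{x}$ on the right) reassembles into $(s-\bar x)$ on the correct side, and no commutation of $\underline{x}$ with $P^{-k-1}$ is ever needed.
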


\begin{proof}
We will only consider the right slice monogenic case as the left slice monogenic case has already been proved in \cite{CoSaSo}. We will proof the formula by induction. We have
\begin{multline*}
\frac{\partial^2}{\partial x_0^2} S_R^{-1}(s,x) = 2(s^2-2{\rm Re}(x) s + |x|^2)^{-3} (-2s+2x_0)^2(s-\bar x)\\
-2(s^2 - 2{\rm Re}(x) s + |x|^2)^{-2}(s-\bar{x}) + 2(s^2 - 2{\rm Re}(x)s + |x|^2)^{-2}(-2s + 2x_0)
\end{multline*}
and for $i=1,\ldots,n$
\begin{multline*}
\frac{\partial^2}{\partial x_i^2}S_R^{-1}(s,x) = 8x_i^2(s^2-2{\rm Re}(x)s + |x|^2)^{-3}(s-\bar{x})  \\
-2(s^2 - 2{\rm Re}(x)s + |x|^2)^{-2}(s-\bar{x}) - 4x_i(s^2 - 2{\rm Re}(x)s + |x|^2)^{-2}e_i.
\end{multline*}
Thus we obtain
\begin{align*}
\Delta S_R^{-1}(s,x) &= 2(s^2-2{\rm Re}(x) s + |x|^2)^{-3} (-2s+2x_0)^2(s-\bar x)  \\
&+2(s^2 - 2{\rm Re}(x)s + |x|^2)^{-2}(-2s + 2x_0)
\\
&
+\sum_{i=1}^{n}8x_i^2(s^2-2{\rm Re}(x)s + |x|^2)^{-3}(s-\bar{x}) \\
 &- \sum_{i=1}^{n}4x_i(s^2 - 2{\rm Re}(x)s + |x|^2)^{-2}e_i -2(n+1)(s^2 - 2{\rm Re}(x)s + |x|^2)^{-2}(s-\bar{x})
\end{align*}
and since $(s^2 - 2{\rm Re}(x)s + |x|^2)^{-1}$ and $(-2s + 2x_0)$ commute we have
\begin{align*}
\Delta S_R(s,x)^{-1} =& \left(2(-2s+2x_0)^2+\sum_{i=1}^{n}8x_i^2\right)(s^2-2{\rm Re}(x) s + |x|^2)^{-3} (s-\bar x)  \\
&+(s^2 - 2{\rm Re}(x)s + |x|^2)^{-2}\left(2(-2s + 2x_0) - \sum_{i=1}^{n}4x_ie_i\right)\\
 & -2(n+1)(s^2 - 2{\rm Re}(x)s + |x|^2)^{-2}(s-\bar{x}). \\
 \end{align*}
 Finally we obtain
 \begin{align*}
\Delta S_R^{-1}(s,x)=& 8(s^2 -2 {\rm Re}(x)s + |x|^2)^{-2} (s-\bar{x})\\
&- 4(s^2 - 2{\rm Re}(x)s + |x|^2)^{-2}(s-\bar{x}) \\
&-2(n+1)(s^2 - 2{\rm Re}(x)s + |x|^2)^{-2}(s-\bar{x})\\
=&-2(n-1)(s^2 - 2{\rm Re}(x)s + |x|^2)^{-2}(s-\bar{x})
\end{align*}
which corresponds to \eqref{hLaplacianR} for $h=1$.

Let us assume that the formula \eqref{hLaplacianR} holds for some $h\in\mathbb{N}$ and let us show that it holds for $h+1$. In order to avoid the constants, we consider the function
\begin{equation}
\label{sigmah}
G_h(s,x) := (s^2-2{\rm Re}(x)s + |x|^2)^{-(h+1)}(s-\bar{x}).
\end{equation}
We have
\begin{align*}
\frac{\partial^2}{\partial x_0^2}G_h =&  (h+2)(h+1)(s^2-2{\rm Re}(x)s + |x|^2)^{-(h+3)}(-2s+2x_0)^2(s-\bar{x})\\
&-2(h
+1)(s^2-2{\rm Re}(x)s + |x|^2)^{-(h+2)}(s-\bar{x})\\
&+ 2(h+1)(s^2-2{\rm Re}(x)s + |x|^2)^{-(h+2)}(-2s+2x_0)
\end{align*}
and
\begin{align*}
\frac{\partial^2}{\partial x_i^2}G_h(s,x) = &4(h+2)(h+1)(s^2-2{\rm Re}(x)s + |x|^2)^{-(h+3)}x_i^2(s-\bar{x}) \\
&-2(h+1)(s^2-2{\rm Re}(x)s + |x|^2)^{-(h+2)}(s-\bar{x}) \\
&-4(h+1)(s^2-2{\rm Re}(x)s + |x|^2)^{-(h+2)}x_ie_i.
\end{align*}
Thus, we obtain
\begin{equation*}
\Delta G_h(s,x) = -2(h+1)(n-(2h+1))(s^2-2{\rm Re}(x)s + |x|^2)^{-(h+2)}(s-\bar{x})
\end{equation*}
and so, taking into account the fact that
\begin{equation*}
\Delta S_R^{-1}(s,x) = (-1)^h\prod_{l=1}^{h}(2l)\prod_{l=1}^{h}(n-(2l-1))G_h(s,x),
\end{equation*}
we obtain that \eqref{hLaplacianR} holds for $h+1$. By induction we get the statement.

\end{proof}

\begin{proposition}\label{Laplacian_smonogenic}
Let $x$, $s\in \rr^{n+1}$ be such that $x\not\in [s]$.
The function $\Delta^hS_L^{-1}(s,x)$
is a right slice monogenic function in the variable $s$ for all $h\geq 0$.
The function $\Delta^hS_R^{-1}(s,x)$
is a left slice monogenic function in the variable $s$ for all $h\geq 0$.
\end{proposition}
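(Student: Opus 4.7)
The plan is to reduce everything to the base case $h=0$ by observing that the Laplace operator $\Delta$ in the variable $x$ commutes with the slice Cauchy--Riemann operator acting on the variable $s$. For $h=0$ the statement is the well-known fact (see, e.g., \cite{bookfunctional}) that the slice monogenic Cauchy kernel $S_L^{-1}(s,x)$ is right slice monogenic in $s$ on $\rr^{n+1}\setminus[x]$, and symmetrically $S_R^{-1}(s,x)$ is left slice monogenic in $s$ there. I would quote this as the starting point of an induction on $h$, or equivalently apply $\Delta^h$ to the base identity.

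For the inductive/commutation step, I would fix $I\in\si$ and parametrize $\cc_I$ by real coordinates $u,v$, so that right slice monogenicity of a smooth function $g(s,x)$ in $s$ reads
\[
\tfrac12\bigl(\pp_u g_I(u+Iv,x)+\pp_v g_I(u+Iv,x)\,I\bigr)=0
\]
on $\cc_I\setminus[x]$. The operator $\Delta=\sum_{i=0}^n\pp^2/\pp x_i^2$ acts only in the variables $x_0,\dots,x_n$ and is a scalar differential operator; in particular it commutes with $\pp_u$ and $\pp_v$ and with right (or left) multiplication by the constant $I\in\rr_n$. Because $S_L^{-1}(s,x)$ is $C^\infty$ jointly in $(s,x)$ wherever $x\notin[s]$, Schwarz's theorem justifies the exchange of $\Delta_x^h$ with $\pp_u$ and $\pp_v$. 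Applying $\Delta_x^h$ to the base identity for $S_L^{-1}$ therefore yields
\[
\tfrac12\bigl(\pp_u(\Delta^h S_L^{-1})_I(u+Iv,x)+\pp_v(\Delta^h S_L^{-1})_I(u+Iv,x)\,I\bigr)=0
\]
for every $I\in\si$, which is exactly right slice monogenicity of $\Delta^h S_L^{-1}(s,x)$ in $s$. The argument for $\Delta^h S_R^{-1}(s,x)$ is identical, with right multiplication by $I$ replaced by left multiplication.

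An alternative, more computational route would be to start from the explicit formulas \eqref{hLaplacian} and \eqref{hLaplacianR} of Theorem \ref{Laplacian_comp} and check the slice Cauchy--Riemann equations by direct differentiation, exploiting that the polynomial $Q_x(s):=s^2-2\,{\rm Re}(x)s+|x|^2$ has real coefficients in $s$ and hence $Q_x(s)^{-(h+1)}$ is intrinsic in $s$, so that left and right multiplication by $(s-\bar x)$ give, respectively, a right and a left slice monogenic function of $s$. This works but is heavier; I would prefer the commutation argument above.

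The only real point requiring care is the justification of the commutation of $\Delta_x$ with the slice Cauchy--Riemann operator in $s$: one must check that both operators act on disjoint variables and that $\Delta_x$ commutes with the right (or left) multiplication by the fixed Clifford constant $I$. The former is immediate from the definitions, and the latter follows because $\Delta_x$ is a real scalar operator and $I$ is a constant in the Clifford algebra, so it factors through coefficient-wise differentiation. No other obstacle should arise, and the proof occupies only a few lines once these commutation facts are noted.
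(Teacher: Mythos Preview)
Your commutation argument is correct and, in fact, more conceptual than the paper's own proof. The paper takes precisely what you call the ``alternative, more computational route'': it invokes the explicit formula from Theorem~\ref{Laplacian_comp}, writes $G_h(s,x)=(s^2-2{\rm Re}(x)s+|x|^2)^{-(h+1)}(s-\bar x)$, sets $s=u+Iv$, and verifies the slice Cauchy--Riemann equation $\partial_u G_h+I\partial_v G_h=0$ by direct differentiation, using that $I$ commutes with the scalar $(s^2-2{\rm Re}(x)s+|x|^2)^{-1}$ on $\mathbb{C}_I$.

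Your route is shorter and avoids any computation beyond the base case: since $\Delta_x$ acts only on the $x$-variables and the slice Cauchy--Riemann operator in $s$ acts only on $(u,v)$, and since $\Delta_x$ is a real scalar operator commuting with right or left multiplication by the constant $I$, Schwarz's theorem on the jointly smooth kernel lets you push $\Delta_x^h$ through the slice Cauchy--Riemann identity for $S_L^{-1}$ (resp.\ $S_R^{-1}$). The paper's approach, by contrast, has the minor advantage of being independent of the smoothness/interchange justification and of displaying explicitly why the particular algebraic form of $G_h$ is slice monogenic in $s$; but your argument is perfectly rigorous and is the cleaner of the two.
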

\begin{proof}
We will only consider the right slice monogenic case as the left slice monogenic case has already been proved in \cite{CoSaSo}.
For $h=0$ the statement is well known. If $h\geq 1$, we set $s = u+Iv$ for $I\in\mathbb{S}$ and we consider the function $G_h$ introduced in \eqref{sigmah} to avoid the constants. We have
\begin{multline*}
\frac{\partial}{\partial u} G_h(u+vI,x) = (u^2 -v^2 + 2Iuv +2{\rm Re}(x)(u+Iv) +|x|^2)^{-(h+1)}
\\ -(h+1)(u^2 -v^2 + 2Iuv +2{\rm Re}(x)(u+Iv) +|x|^2)^{-(h+2)}(2u+2Iv-2{\rm Re}(x_0))(u+vI-\bar{x})
\end{multline*}
and
\begin{multline*}
\frac{\partial}{\partial u} G_h(u+vI,x) = (u^2 -v^2 + 2Iuv +2{\rm Re}(x)(u+Iv) +|x|^2)^{-(h+1)}
\\ -(h+1)(u^2 -v^2 + 2Iuv +2{\rm Re}(x)(u+Iv) +|x|^2)^{-(h+2)}(-2v+2Iu-2I{\rm Re}(x_0))(u+vI-\bar{x}).
\end{multline*}
As $I$ and $(u^2 -v^2 + 2Iuv +2{\rm Re}(x)(u+Iv) +|x|^2)^{-1}$ commute, it follows immediately that
\begin{equation*}
\frac{\partial}{\partial u} G_h(u+Iv, x) + I\frac{\partial}{\partial v}G_h(u+Iv,x) = 0.
\end{equation*}
Therefore $\Delta S_R^{-1}(s,x)$ is left slice monogenic in its first variable.

\end{proof}

\begin{proposition}\label{Laplacian}
Let $n$ be an odd number and
let $x$,
$s\in \rr^{n+1}$
be such that
 $x\not\in [s]$.
Then the function $\Delta^{\frac{n-1}{2}}S_L^{-1}(s,x)$ is a left monogenic function in the variable $x$,
and the function  $\Delta^{\frac{n-1}{2}}S_R^{-1}(s,x)$ is a right monogenic function in the variable $x$.
\end{proposition}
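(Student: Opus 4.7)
The strategy is to invoke the Fueter-Sce mapping theorem recalled at the beginning of this section: for odd $n$, applying $\Delta^{(n-1)/2}$ to a slice monogenic function of $x$ (written in the axially symmetric form $\alpha(x_0,|\underline x|)+\tfrac{\underline x}{|\underline x|}\beta(x_0,|\underline x|)$) produces a monogenic function of $x$. It thus suffices to show that, with $s$ fixed, $S_L^{-1}(s,x)$ is left slice monogenic in $x$ and $S_R^{-1}(s,x)$ is right slice monogenic in $x$ on $\rr^{n+1}\setminus [s]$.

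For the left case, write $S_L^{-1}(s,x)$ in form II as $(s-\bar x)A^{-1}$, where $A := s^2 - 2\mathrm{Re}(x)s + |x|^2$. Since $x_0$ is real, $A = (s-x_0)^2 + r^2$ with $r = |\underline x|$, so $A$ depends on $x$ only through $x_0$ and $r$. Using $s - \bar x = (s-x_0) + I_x r$ with $I_x = \underline x/r$, one obtains the axially symmetric representation
\[
S_L^{-1}(s,x) = \alpha(x_0, r) + I_x \beta(x_0, r), \qquad \alpha = (s-x_0)A^{-1}, \quad \beta = r A^{-1}.
\]
The decisive observation is that $A$ commutes with $s - x_0$ (as a polynomial in $s - x_0$ with real coefficient $r^2$), so that $\partial_{x_0} A^{-1} = 2(s-x_0)A^{-2}$ and $\partial_r A^{-1} = -2r A^{-2}$ hold in their naive form. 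The Cauchy-Riemann system $\partial_{x_0}\alpha = \partial_r \beta$ and $\partial_r \alpha = -\partial_{x_0}\beta$ then reduces to the trivial identity $(s-x_0)^2 A^{-2} + r^2 A^{-2} = A^{-1}$, namely $A\cdot A^{-2} = A^{-1}$. Hence $S_L^{-1}(s,x)$ is left slice monogenic in $x$, and the Fueter-Sce theorem immediately yields that $\Delta^{(n-1)/2} S_L^{-1}(s,x)$ is left monogenic in $x$.

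The right case is entirely parallel. Start from $S_R^{-1}(s,x) = A^{-1}(s-\bar x)$ in form II and restrict to a slice $x = u + Iv \in \cc_I$. Using the same commutation $[A, s-x_0]=0$, the direct computation gives
\[
\partial_u\bigl(A^{-1}(s-u+Iv)\bigr) + \partial_v\bigl(A^{-1}(s-u+Iv)\bigr)\,I = 2A^{-2}\bigl[(s-u)^2+v^2\bigr] - 2A^{-1} = 0,
\]
the terms involving $I$ canceling because $v$ is real. Thus $S_R^{-1}(s,x)$ is right slice monogenic in $x$, and the right-sided Fueter-Sce theorem gives that $\Delta^{(n-1)/2} S_R^{-1}(s,x)$ is right monogenic in $x$.

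The only potential obstacle in carrying out this plan is the noncommutativity of the Clifford algebra when differentiating $A^{-1}$; this is neutralized by the commutation of $A$ with $s-x_0$, and the role of the oddness of $n$ enters only through the Fueter-Sce theorem itself.
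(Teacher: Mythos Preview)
Your proposal is correct but follows a genuinely different route from the paper's own proof. The paper does not appeal to the abstract Fueter--Sce theorem at all; instead it uses the explicit closed formula for $\Delta^{(n-1)/2}S_R^{-1}(s,x)$ established in Theorem~\ref{Laplacian_comp} (namely, a constant times $G_{(n-1)/2}(s,x)=(s^2-2\mathrm{Re}(x)s+|x|^2)^{-(n+1)/2}(s-\bar x)$) and then verifies directly that $\partial_r G_{(n-1)/2}:=\partial_{x_0}G_{(n-1)/2}+\sum_{i=1}^n(\partial_{x_i}G_{(n-1)/2})e_i=0$ by an explicit partial-derivative computation, the key identity being $s(s-\bar x)-(s-\bar x)x=s^2-2\mathrm{Re}(x)s+|x|^2$. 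The left case is cited from \cite{CoSaSo}.

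Your argument instead checks the Cauchy--Riemann system for the input kernels $S_L^{-1}(s,\cdot)$ and $S_R^{-1}(s,\cdot)$ in the axially symmetric representation and then invokes Fueter--Sce as a black box. This is conceptually cleaner and avoids differentiating the $(n+1)/2$-power; the commutation $[A,s-x_0]=0$ you identify is exactly the mechanism that makes the Cauchy--Riemann verification trivial. The tradeoffs are: (i) your proof does not yield the explicit kernel $F_n^{L/R}$ as a by-product, whereas the paper's proof dovetails with Theorem~\ref{Laplacian_comp} which is needed anyway to define the $F_n$-kernels; and (ii) for the right monogenic conclusion you rely on a right-sided Fueter--Sce mapping theorem, which is standard but is only stated informally (and only in its left version) in the paper's Section~3. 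If you want your argument to be fully self-contained within the paper, you should note that the right-sided Fueter--Sce statement follows by the same mechanism (take $f=\alpha+\beta I$ with $I$ on the right).
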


\begin{proof}
We will only consider the right slice monogenic case as the left slice monogenic case has already been proved in \cite{CoSaSo}.
Again we consider the function $G_{\frac{n-1}{2}}$ as defined in \eqref{sigmah} to avoid the constants. We have
\begin{align*}
\frac{\partial}{\partial x_0} G_{\frac{n-1}{2}} = &-\frac{n+1}{2}(s^2 - 2{\rm Re}(x)s+|x|^2)^{-\frac{n+1}{2}-1}(-2s+2x_0)(s-\bar{x})\\
&- (s^2 - 2{\rm Re}(x)s+|x|^2)^{-\frac{n+1}{2}}
\end{align*}
and
\begin{align*}
\frac{\partial}{\partial x_i}G_{\frac{n-1}{2}} = &-\frac{n+1}{2}(s^2 - 2{\rm Re}(x)s+|x|^2)^{-\frac{n+1}{2}-1}(2x_i)(s-\bar{x})\\
&+(s^2 - 2{\rm Re}(x)s+|x|^2)^{-\frac{n+1}{2}}e_i.
\end{align*}
for $i = 1,\ldots,n$.
If we consider $\partial_rG_{\frac{n-1}{2}} := \frac{\partial}{\partial x_{0}}G_{\frac{n-1}{2}} + \sum_{i=1}^{n}(\frac{\partial}{\partial x_i}G_{\frac{n-1}{2}})e_i $ we get
\begin{align*}
\partial_{r}G_{\frac{n-1}{2}}  = &-\frac{n+1}{2}(s^2 - 2{\rm Re}(x)s+|x|^2)^{-\frac{n+1}{2}-1}(-2s+2x_0)(s-\bar{x})\\
&- (s^2 - 2{\rm Re}(x)s+|x|^2)^{-\frac{n+1}{2}} \\
&-\frac{n+1}{2}(s^2 - 2{\rm Re}(x)s+|x|^2)^{-\frac{n+1}{2}-1}(s-\bar{x})\left(\sum_{i=1}^{n}2x_ie_i\right)  \\
&-n (s^2 - 2{\rm Re}(x)s+|x|^2)^{-\frac{n+1}{2}} \\
=& (n+1)(s^2 - 2{\rm Re}(x)s + |x|^2)^{-\frac{n+1}{2} -1}\left(s(s-\bar{x}) - (s-\bar{x})x\right) \\
&-(n+1)(s^2 - 2{\rm Re}(x)s + |x|^2)^{-\frac{n+1}{2}} = 0
\end{align*}
as $s(s-\bar{x})-(s-\bar{x})x = s^2 - 2{\rm Re}(x)s + |x|^2$. Therefore $G_{\frac{n-1}{2}}(s,x)$ and $S_R^{-1}(s,x)$ are right monogenic in $x$.

\end{proof}

\begin{definition}[The $F_n$-kernel]
Let $n$ be an odd number and  let $x$, $s\in \rr^{n+1}$.
We define, for $s\not\in[x]$, the $F_n^L$-kernel as
$$
F_n^L(s,x):=\Delta^{\frac{n-1}{2}}S_L^{-1}(s,x)=\gamma_n(s-\bar x)(s^2-2{\rm Re}(x)s +|x|^2)^{-\frac{n+1}{2}},
$$
and the $F_n^R$-kernel as
$$
F_n^R(s,x):=\Delta^{\frac{n-1}{2}}S_R^{-1}(s,x)=\gamma_n(s^2-2{\rm Re}(x)s +|x|^2)^{-\frac{n+1}{2}}(s-\bar x),
$$
where
\begin{equation}\label{gamman}
\gamma_n:=(-1)^{(n-1)/2}2^{n-1}\Big[\Big( \frac{n-1}{2}\Big)!\Big]^2 .
\end{equation}
\end{definition}
\begin{remark}
{\rm
Observe that the constants $\gamma_n$ are obtained from the identity
$$
(-1)^{(n-1)/2}\prod_{\ell=1}^{(n-1)/2}(2\ell) \prod_{\ell=1}^{(n-1)/2} (n-(2\ell -1))
=(-1)^{(n-1)/2}2^{n-1}\Big[\Big( \frac{n-1}{2}\Big)!\Big]^2.
$$
}
\end{remark}

\begin{theorem}[The Fueter-Sce mapping theorem in integral form]
Let $n$ be an odd number and set $ds_I=ds/ I$.
Let $W\subset \mathbb{R}^{n+1}$ be an open set. Let $U$ be a bounded  axially symmetric s-domain such that $\overline{U}\subset W$.
 Suppose that the boundary of $U\cap \mathbb{C}_I$  consists of a
finite number of rectifiable Jordan curves for any $I\in\mathbb{S}$.
\begin{itemize}
\item[(a)]
If $x\in U$ and $f\in \mathcal{SM}^L(W)$ then
$\breve{f}(x)=\Delta^{\frac{n-1}{2}}f(x)$
is left monogenic and it admits the integral representation
\begin{equation}\label{Fueter}
 \breve{f}(x)=\frac{1}{2 \pi}\int_{\pp (U\cap \mathbb{C}_I)} F_n^L(s,x)ds_I f(s).
\end{equation}
\item[(b)]
If $x\in U$ and $f\in \mathcal{SM}^R(W)$ then
$\breve{f}(x)=\Delta^{\frac{n-1}{2}}f(x)$
is right monogenic and it admits the integral representation
\begin{equation}\label{Fueter}
 \breve{f}(x)=\frac{1}{2 \pi}\int_{\pp (U\cap \mathbb{C}_I)} f(s)ds_I F_n^R(s,x).
\end{equation}
\end{itemize}
The integrals  depend neither on $U$ and nor on the imaginary unit $I\in\mathbb{S}$.
\end{theorem}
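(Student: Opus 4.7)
The plan is to start from the Cauchy formula for slice monogenic functions (Theorem~\ref{Cauchygenerale}) and apply the operator $\Delta^{(n-1)/2}$ to both sides. For part (a), since $f$ is left slice monogenic on $W\supset \overline U$, Theorem~\ref{Cauchygenerale} gives
\begin{equation*}
 f(x)=\frac{1}{2 \pi}\int_{\partial (U\cap \mathbb{C}_I)} S_L^{-1}(s,x)\,ds_I\, f(s), \qquad x\in U.
\end{equation*}
Apply $\Delta^{(n-1)/2}$ in the variable $x$. The factor $ds_I f(s)$ does not depend on $x$, and the kernel $S_L^{-1}(s,x)$ is real analytic as a function of $x\in U$ for each fixed $s\in\partial(U\cap\mathbb{C}_I)$ (since $x\notin[s]$ for such $s$ when $x\in U$); moreover the bound on $\partial(U\cap\mathbb{C}_I)$ together with smoothness of all partial derivatives of $S_L^{-1}(s,x)$ up to order $n-1$ permits the exchange of the Laplacian with the contour integral, yielding
\begin{equation*}
 \breve f(x)=\Delta^{(n-1)/2} f(x)=\frac{1}{2\pi}\int_{\partial(U\cap\mathbb{C}_I)}\left[\Delta^{(n-1)/2} S_L^{-1}(s,x)\right]ds_I\, f(s).
\end{equation*}

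Next I would invoke Theorem~\ref{Laplacian_comp}(a), specialized to $h=(n-1)/2$, together with the identity in the remark following the definition of $\gamma_n$, to identify
$\Delta^{(n-1)/2} S_L^{-1}(s,x) = \gamma_n (s-\bar x)(s^2-2\mathrm{Re}(x)s+|x|^2)^{-(n+1)/2} = F_n^L(s,x)$. This gives formula (\ref{Fueter}). The fact that $\breve f$ is left monogenic then follows directly from Proposition~\ref{Laplacian}: since the integrand $F_n^L(s,x)$ is left monogenic in $x$, and differentiation under the integral sign is legitimate as above, the Dirac operator applied to $\breve f$ can be carried inside the integral and annihilates the kernel pointwise.

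Independence of the representation from the choice of $U$ (among bounded axially symmetric s-domains containing a fixed neighborhood of the point $x$ and contained in $W$) and from $I\in\mathbb{S}$ is inherited directly from the corresponding property of the slice monogenic Cauchy formula, since the passage $f\mapsto \breve f$ is just the Laplacian applied after integration, and the Laplacian of a quantity independent of $U$ and $I$ is again independent of $U$ and $I$.

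Part (b) is entirely analogous, starting from the right Cauchy formula (\ref{Cauchyright}), using Theorem~\ref{Laplacian_comp}(b) to compute $\Delta^{(n-1)/2} S_R^{-1}(s,x) = F_n^R(s,x)$ and Proposition~\ref{Laplacian} to conclude right monogenicity. The main (and only mildly technical) obstacle is rigorously justifying the exchange of $\Delta^{(n-1)/2}$ with the path integral; this is routine given that $\partial(U\cap\mathbb{C}_I)$ is compact, $S_{L/R}^{-1}(s,x)$ is $C^\infty$ in $x$ on any compact subset of $U$ that stays away from $[s]$ for $s$ on the contour, and a finite number of partial derivatives need to be controlled uniformly in $s$ — all of which follow from the explicit rational form of the kernel.
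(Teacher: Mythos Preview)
Your proposal is correct and follows essentially the same route as the paper: the paper's proof is the one-liner ``It follows from the Fueter--Sce mapping theorem, from the Cauchy formulas and from Proposition~\ref{Laplacian},'' and you have simply unpacked this by starting from the Cauchy formula, pushing $\Delta^{(n-1)/2}$ through the integral, identifying the resulting kernel via Theorem~\ref{Laplacian_comp}, and reading off monogenicity from Proposition~\ref{Laplacian}. The only cosmetic difference is that the paper cites the abstract Fueter--Sce theorem for the monogenicity of $\breve f$, whereas you obtain it directly from monogenicity of the kernel and differentiation under the integral; both are valid and amount to the same thing.
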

\begin{proof}
It follows from the Fueter- Sce mapping theorem, from the Cauchy formulas and from  Proposition \ref{Laplacian}.
\end{proof}

We point out that the Fueter-Sce mapping theorem in integral form can be proved for more general open sets, precisely
 for open sets that are only axially symmetric, see \cite{CoSaSo}.
 For the application to the $F$-functional calculus  we can take axially symmetric s-domains,
 this is the reason for which we work directly with slice monogenic functions.
 In this case we consider just axially symmetric open sets we have to change the definition of slice monogenicity and  consider holomorphic functions of a paravector variable.
Precisely the definition has to changed as follows.
 Let $U\subseteq\rr^{n+1}$ be an axially symmetric open set
 and let $\mathcal{U}\subseteq\mathbb{R}\times\rr$ be such that $x=u+Iv\in U$ for all $(u,v)\in\mathcal{U}$.
We consider  functions on $U$ of the form
$$f(x)=\alpha(u,v)+I\beta(u,v)$$
 where $\alpha$, $\beta$ are $\mathbb{R}_n$-valued differentiable functions such that
 $$
 \alpha(u,v)=\alpha(u,-v), \ \ \ \beta(u,v)=\beta(u,-v)\ \ \ {\rm for \ all}\ \  (u,v)\in \mathcal{U}
 $$
 and moreover $\alpha$ and $\beta$ satisfy the Cauchy-Riemann system
 $$
\pp_u \alpha-\pp_v\beta=0,\ \ \ \ \
\pp_v\alpha+\pp_u \beta=0.
$$
On axially symmetric s-domains this class of functions coincides with the class of slice monogenic functions.

\section{The formulations of the $F$-functional calculus}

In this section we recall some definitions and results to be used in the sequel.
By $V$ we denote a real Banach space over $\mathbb{R}$ with norm $\|\cdot \|$.
It is possible to endow $V$ with an operation of multiplication by elements of $\mathbb{R}_n$ which gives
a two-sided module over $\mathbb{R}_n$.
  We denote by $V_n$  two-sided Banach module $V\otimes \mathbb{R}_n$.
 An element in $V_n$ is of the form $\sum_A v_A\otimes e_A$ (where
 $A=i_1\ldots i_r$, $i_\ell\in \{1,2,\ldots, n\}$, $i_1<\ldots <i_r$ is a multi-index).
The multiplications (right and left) of an element $v\in V_n$ with a scalar
$a\in \mathbb{R}_n$ are defined as
$$
va=\sum_A v_A \otimes (e_A a),\ \ \ \ {\rm and}\ \ \ \ av=\sum_A v_A \otimes (ae_A ).
$$
For short, we will write
$\sum_A v_A e_A$ instead of $\sum_A v_A \otimes e_A$. Moreover, we define
$$\| v\|_{V_n}=
\sum_A\| v_A\|_V.$$

Let $\mathcal{B}(V)$ be the space of bounded $\mathbb{R}$-homomorphisms of the Banach space $V$ into itself
 endowed with the natural norm denoted by $\|\cdot\|_{\mathcal{B}(V)}$.
\noindent
If $T_A\in \mathcal{B}(V)$, we can define the operator $T=\sum_A T_Ae_A$ and
its action on
$$
v=\sum_B v_Be_B
$$
 as
 $$
 T(v)=\sum_{A,B}T_A(v_B)e_Ae_B.
 $$
\noindent
The set of all such bounded operators is denoted by $\mathcal{B}(V_n)$. The norm is defined by
$$
\|T\|_{\mathcal{B}(V_n)}=\sum_A \|T_A\|_{\mathcal{B}(V)}.
$$
In the sequel,we will only consider operators of the form
$T=T_0+\sum_{j=1}^ne_jT_j$ where $T_j\in\mathcal{B}(V)$ for $j=0,1,...,n$ and we recall that
 the conjugate  is defined by $\overline{T}=T_0-\sum_{j=1}^ne_jT_j$.
The set of such operators in ${\mathcal{B}(V_n)}$ will be denoted by $\mathcal{B}^{\small 0,1}(V_n)$.
In the section we will always consider n-tuples of bounded commuting operators, in paravector form,
and we will denote such set as $\mathcal{BC}^{\small 0,1}(V_n)$.
We recall some results proved is \cite{SlaisComm} and in \cite{CoSaSo}.
\begin{definition}[The $F$-spectrum and the $F$-resolvent sets]\label{seesspect}
Let
$T\in\mathcal{BC}^{\small 0,1}(V_n)$.
We define the $F$-spectrum $\sigma_F(T)$ of $T$ as:
$$
\sigma_{F}(T)=\{ s\in \mathbb{R}^{n+1}\ \ :\ \ \ \ \ s^2\mathcal{I}-s(T+\overline{T})+T\overline{T}\ \ \
{\it is\ not\  invertible}\}.
$$
The $F$-resolvent set $\rho_F(T)$ is defined by
$$
\rho_{F}(T)=\mathbb{R}^{n+1}\setminus\sigma_{F}(T).
$$
\end{definition}
Here we state two important properties of the $F$-spectrum. The fact that it has axial symmetry and for the case of bounded operators it is a compact and non empty set.
\begin{theorem}[Structure of the $F$-spectrum]\label{strutturaK}
 Let $T\in\mathcal{BC}^{\small 0,1}(V_n)$ and let $p = p_0 +p_1I\in
  [p_0 +p_1 I]\subset \mathbb{R}^{n+1}\setminus\mathbb{R}$, such that $p\in \sigma_{ F}(T)$.
  Then all the elements of the $(n-1)$-sphere $[p_0 +p_1I]$
 belong to $\sigma_{ F}(T)$.
\end{theorem}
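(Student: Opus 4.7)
The strategy is to use the commutativity of $T_0,T_1,\dots,T_n$ to factor the quadratic pseudo-resolvent operator $Q_s(T):=s^2\mathcal{I}-s(T+\overline{T})+T\overline{T}$ so that the dependence on the imaginary unit in $s=s_0+s_1I$ is isolated. Since $T_0,\dots,T_n$ commute, a direct computation shows that $T+\overline{T}=2T_0$ and $T\overline{T}=T_0^2+T_1^2+\cdots+T_n^2$; both are real operators, in the sense that they lie in the subalgebra generated by $\{T_j\}$ and commute with every $I\in\mathbb{S}$.

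Next I would substitute $p=p_0+p_1 I$ into $Q_p(T)$ and collect the terms according to whether the imaginary unit $I$ appears. The result is
\[
Q_p(T)=\underbrace{\bigl((p_0\mathcal{I}-T_0)^2-p_1^2\mathcal{I}+\textstyle\sum_{j=1}^n T_j^2\bigr)}_{=:P}+I\cdot\underbrace{2p_1(p_0\mathcal{I}-T_0)}_{=:N},
\]
where $P$ and $N$ are built entirely from the commuting real operators $T_0,\dots,T_n$ and the real parameters $p_0,p_1$; in particular $PN=NP$ and $I$ commutes with both $P$ and $N$.

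The decisive step is the identity
\[
(P+IN)(P-IN)=(P-IN)(P+IN)=P^2+N^2,
\]
which is valid precisely because $P$, $N$ and $I$ pairwise commute in the relevant sense. This shows that $Q_p(T)=P+IN$ is invertible in $\mathcal{B}(V_n)$ if and only if the real operator $P^2+N^2$ is invertible, with explicit inverse $Q_p(T)^{-1}=(P-IN)(P^2+N^2)^{-1}$.

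Finally, note that $P$, $N$ and hence $P^2+N^2$ depend only on $p_0$, $p_1$ and $T$, and not on the imaginary unit $I\in\mathbb{S}$. Therefore, for any other $J\in\mathbb{S}$ and $q=p_0+p_1 J\in[p]$, the same factorization yields that $Q_q(T)=P+JN$ is invertible iff the same $P^2+N^2$ is invertible. Consequently $p\in\sigma_F(T)$ forces $q\in\sigma_F(T)$ for every $q\in[p_0+p_1\mathbb{S}]$, which is the claim. The only delicate point is verifying the commutation of $P$ with $N$ and with $I$, but this is an immediate consequence of the standing assumption that the components of $T$ commute with one another, and there is no spectral or analytic obstacle beyond this algebraic identity.
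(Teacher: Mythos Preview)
The paper does not give a proof of this theorem; it is simply recalled from \cite{SlaisComm} and \cite{CoSaSo}. Your argument is the natural one and is essentially correct, but one step is under-justified. From the factorization
\[
(P+IN)(P-IN)=(P-IN)(P+IN)=P^{2}+N^{2}
\]
you conclude that $P+IN$ is invertible \emph{if and only if} $P^{2}+N^{2}$ is invertible. The ``if'' direction is immediate from the explicit inverse you wrote down. The ``only if'' direction, however, does not follow from the identity alone: knowing that $P+IN$ is invertible does not by itself force $P-IN$ (and hence the product $P^{2}+N^{2}$) to be invertible in a noncommutative Banach algebra. Yet this reverse implication is exactly what your final paragraph uses: from $p\in\sigma_{F}(T)$ you want to deduce that $P^{2}+N^{2}$ fails to be invertible.

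The gap is easy to close. Let $\sigma$ be the grade involution of $\mathbb{R}_{n}$, i.e.\ the algebra automorphism determined by $\sigma(e_{j})=-e_{j}$, and extend it componentwise to $V_{n}$ by $\sigma\bigl(\sum_{A}v_{A}e_{A}\bigr)=\sum_{A}(-1)^{|A|}v_{A}e_{A}$. Since $P$ and $N$ act only on the $V$-components, $\sigma$ commutes with them, while $\sigma(Iw)=-I\,\sigma(w)$ because $I$ is a $1$-vector. Hence $\sigma\,(P+IN)\,\sigma^{-1}=P-IN$, so $P+IN$ is invertible in $\mathcal{B}(V_{n})$ if and only if $P-IN$ is, and in that case $P^{2}+N^{2}=(P+IN)(P-IN)$ is invertible as well. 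With this observation inserted, your proof is complete.
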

Thus the $F$-spectrum consists of real points and/or $(n-1)$-spheres.
\begin{theorem}[Compactness of the $F$-spectrum]\label{compattezaDS}
\par\noindent
Let
$T\in\mathcal{BC}^{\small 0,1}(V_n)$. Then
the $F$-spectrum $\sigma_{F} (T)$  is a compact nonempty set.
Moreover, $\sigma_{F} (T)$ is contained in $\{
s\in\mathbb{R}^{n+1}\, :\, | s|  \leq \|T\| \ \}$.
\end{theorem}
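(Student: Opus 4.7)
The plan is to establish the three claims of the theorem separately: the containment $\sigma_F(T)\subseteq\{s\in\mathbb{R}^{n+1}:|s|\leq\|T\|\}$, the closedness of $\sigma_F(T)$, and its non-emptiness.

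For the containment, the commutativity of the components $T_0,T_1,\ldots,T_n$ is the key ingredient. It forces $T+\overline{T}=2T_0$ and $T\overline{T}=T_0^2+T_1^2+\cdots+T_n^2$, both of which are scalar operators (so they commute with left multiplication by any Clifford number). Consequently
\[
Q_s(T):=s^2\mathcal{I}-s(T+\overline{T})+T\overline{T}=(s\mathcal{I}-T_0)^2+(T_1^2+\cdots+T_n^2)\mathcal{I}.
\]
Assume $|s|>\|T\|$. Since $\|T_0\|\leq\|T\|<|s|$, a Neumann series shows that $(s\mathcal{I}-T_0)^{-1}$ exists with norm at most $(|s|-\|T_0\|)^{-1}$. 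Factoring
\[
Q_s(T)=(s\mathcal{I}-T_0)^2\bigl[\mathcal{I}+(s\mathcal{I}-T_0)^{-2}(T_1^2+\cdots+T_n^2)\bigr]
\]
and using $\|T_1^2+\cdots+T_n^2\|\leq(\|T_1\|+\cdots+\|T_n\|)^2=(\|T\|-\|T_0\|)^2$, the norm of the correction term is at most $(\|T\|-\|T_0\|)^2/(|s|-\|T_0\|)^2<1$, so a further Neumann series yields invertibility of $Q_s(T)$. Hence $s\in\rho_F(T)$ whenever $|s|>\|T\|$.

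The closedness of $\sigma_F(T)$ follows formally: the map $s\mapsto Q_s(T)$ is polynomial in $s_0,s_1,\ldots,s_n$ with coefficients in the Banach algebra $\mathcal{B}(V_n)$, hence continuous, and the set of invertible elements of a Banach algebra is open. Thus $\rho_F(T)=\{s:Q_s(T)\text{ invertible}\}$ is open, so $\sigma_F(T)$ is closed, and combined with the previous step it is compact. For non-emptiness, suppose for contradiction $\sigma_F(T)=\emptyset$. Then the left $SC$-resolvent $S_{C,L}^{-1}(s,T)=(s\mathcal{I}-\overline{T})Q_s(T)^{-1}$ is defined on all of $\mathbb{R}^{n+1}$ and, by the results recalled from \cite{SlaisComm,CoSaSo}, is slice monogenic in the variable $s$. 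The estimate from the first step gives $\|S_{C,L}^{-1}(s,T)\|=O(|s|^{-1})$ as $|s|\to\infty$, so a Liouville-type theorem for entire $\mathcal{B}(V_n)$-valued slice monogenic functions forces $S_{C,L}^{-1}(s,T)\equiv 0$; multiplying by $Q_s(T)$ on the right yields $s\mathcal{I}-\overline{T}\equiv 0$ for every $s\in\mathbb{R}^{n+1}$, a contradiction.

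The main obstacle is obtaining the sharp bound $|s|\leq\|T\|$: a naive Neumann expansion of $Q_s(T)^{-1}$ as a series in $s^{-1}$, starting from $Q_s(T)=s^2\mathcal{I}(\mathcal{I}-2s^{-1}T_0+s^{-2}T\overline{T})$, only produces invertibility when $|s|>(1+\sqrt{2})\|T\|$. The two-step factorization above, which first inverts the "scalar part" $(s\mathcal{I}-T_0)$ and only then absorbs the vector-part remainder $(T_1^2+\cdots+T_n^2)\mathcal{I}$, is what delivers the sharp bound.
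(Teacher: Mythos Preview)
The paper does not actually prove this theorem here; it is quoted (together with the structure theorem for $\sigma_F$) from \cite{SlaisComm} and \cite{CoSaSo}, so there is no in-paper argument to compare against directly. Your closedness and non-emptiness arguments are standard and essentially correct, modulo the minor remark that $S_{C,L}^{-1}(s,T)$ is \emph{right} slice monogenic in $s$, not left, and that the Liouville step must be stated for operator-valued slice monogenic functions.

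There is, however, a genuine gap in your proof of the sharp containment. You assert that a Neumann series gives
\[
\|(s\mathcal{I}-T_0)^{-1}\|\leq(|s|-\|T_0\|)^{-1},
\]
but the norm on $\mathcal{B}(V_n)$ fixed in the paper is the $\ell^1$-type norm $\|T\|=\sum_A\|T_A\|$, and with this norm the operator of left multiplication by a paravector $p$ has norm $\sum_j|p_j|$, \emph{not} the Euclidean modulus $|p|$. In particular $\|s^{-1}\mathcal{I}\|=\|s\|_1/|s|^2$, which is strictly larger than $1/|s|$ unless $s$ is real or lies on a coordinate axis. The Neumann bound you actually obtain is therefore
\[
\|(s\mathcal{I}-T_0)^{-1}\|\leq\frac{C_s}{|s|-\|T_0\|}
\]
with a direction-dependent constant $C_s>1$, and after squaring this constant your condition $\|(s\mathcal{I}-T_0)^{-2}(T_1^2+\cdots+T_n^2)\|<1$ no longer follows from $|s|>\|T\|$ alone. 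Your two-step factorisation thus yields invertibility of $Q_s(T)$ only for $|s|$ larger than some multiple of $\|T\|$; this is enough for compactness but not for the stated sharp inclusion $\sigma_F(T)\subseteq\{|s|\leq\|T\|\}$.

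The approach that does give the sharp bound is to expand a resolvent as a single series, e.g.\ $\sum_{k\geq 0}T^k s^{-k-1}$: since $s^{-k-1}$ remains a paravector one has $\|s^{-k-1}\mathcal{I}\|\leq C|s|^{-k-1}$ uniformly in $k$, and the dimensional constant $C$ sits harmlessly in front of the sum rather than inside the convergence ratio. This is essentially how \cite{SlaisComm} proceeds (after identifying $\sigma_F(T)$ with the $S$-spectrum). The difficulty in your argument is precisely that the two-step factorisation forces the extraneous constant into the smallness condition for the second Neumann series.
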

The definition of $F$-resolvent operator is suggested by the Fueter-Sce mapping theorem in integral form.
\begin{definition}[$F$-resolvent operators]
Let $n$ be an odd number and let
$T\in\mathcal{BC}^{\small 0,1}(V_n)$.
 For $s\in \rho_F(T)$
we define the left $F$-resolvent operator by
\begin{equation}\label{FresBOUNDL}
F_n^L(s,T):=\gamma_n(s\mathcal{I}-\overline{ T})(s^2\mathcal{I}-s(T+\overline{T})+T\overline{T})^{-\frac{n+1}{2}},
\end{equation}
and the right $F$-resolvent operator by
\begin{equation}\label{FresBOUNDR}
F_n^R(s,T):=\gamma_n(s^2\mathcal{I}-s(T+\overline{T})+T\overline{T})^{-\frac{n+1}{2}}(s\mathcal{I}-\overline{ T}),
\end{equation}
where the constants $\gamma_n$ are given by (\ref{gamman}).
\end{definition}
Now we have to say which are the functions that are defined on suitable open sets that contain the $F$-spectrum. For this class of slice
monogenic functions it is possible to define the $F$-functional calculus for bounded operators.
\begin{definition}\label{def3.9_mondsa}
Let  $T\in\mathcal{BC}^{\small 0,1}(V_n)$  and let $U \subset \mathbb{R}^{n+1}$ be an axially symmetric s-domain.
\begin{itemize}
\item[(a)] We say that $U$ is admissible for $T$ if it contains  the $F$-spectrum $\sigma_F(T)$, and if
$\pp (U\cap \mathbb{C}_I)$ is the union of a finite number of
rectifiable Jordan curves  for every $I\in\mathbb{S}$.
\item[(b)]
Let $W$ be an open set in $\mathbb{R}^{n+1}$.
A function  $f\in \mathcal{SM}^L(W)$ (resp. $f\in \mathcal{SM}^R(W)$) is said to be locally  left (resp. right) slice monogenic  on $\sigma_{F}(T)$
if there exists an admissible domain $U\subset \mathbb{R}^{n+1}$  such that $\overline{U}\subset W$, on
which $f$ is left (resp. right) slice monogenic.
\item[(c)]
We will denote by $\mathcal{SM}^L_{\sigma_{F}(T)}$ (resp. right $\mathcal{SM}^R_{\sigma_{F}(T)}$)  the set of locally left (resp. right) slice monogenic  functions on $\sigma_{F}(T)$.
\end{itemize}
\end{definition}
Finally the following theorem is a crucial fact for the well posedness of the $F$-functional calculus.
\begin{theorem} Let $n$ be an odd number, let
 $T\in\mathcal{BC}^{\small 0,1}(V_n)$ and set $ds_I=ds/I$ .
   Then the integrals
\begin{equation}\label{integ311_mon}
\frac{1}{2\pi}\int_{\pp(U\cap \mathbb{C}_I)} F_n^L(s,T) \, ds_I\, f(s),\ \ \ \ \ f\in \mathcal{SM}^L_{\sigma_{F}(T)},
\end{equation}
and
\begin{equation}\label{integ311_monRIGHT}
\frac{1}{2\pi}\int_{\pp(U\cap \mathbb{C}_I)} f(s) \, ds_I\, F_n^R(s,T),\ \ \ \ \ f\in \mathcal{SM}^R_{\sigma_{F}(T)},
\end{equation}
 depend neither on the imaginary unit $I\in\mathbb{S}$ and nor on the set $U$.
\end{theorem}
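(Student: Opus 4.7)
The plan is to mirror the strategy used for the scalar Fueter--Sce integral representation and for the $S$-functional calculus. The argument splits into three steps: (i) show that the operator-valued kernels $s\mapsto F_n^L(s,T)$ and $s\mapsto F_n^R(s,T)$ are, respectively, right and left slice monogenic in $s$ on $\rho_F(T)$; (ii) use the slice version of Cauchy's theorem to prove independence from $U$; (iii) use a homotopy of imaginary units to prove independence from $I$.

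First, I would verify the slice monogenicity of the operator-valued kernels. Since the components $T_0,T_1,\dots,T_n$ commute, the algebraic manipulations in the proof of Proposition \ref{Laplacian_smonogenic} transfer verbatim from the paravector variable $x$ to the paravector operator $T$: in computing $\tfrac{\partial}{\partial u}$ and $\tfrac{\partial}{\partial v}$ of $(s^2\mathcal{I}-s(T+\overline{T})+T\overline{T})^{-(n+1)/2}(s\mathcal{I}-\overline{T})$ with $s=u+Iv$, the only commutations invoked are between $I$ and the scalar-type operator $(s^2\mathcal{I}-s(T+\overline{T})+T\overline{T})^{-1}$, which hold because this pseudo-resolvent depends on $s$ only through real-scalar combinations of $s$ and through the fixed commuting operators. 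One concludes that $F_n^R(s,T)$ is left slice monogenic in $s$ on $\rho_F(T)$, and symmetrically that $F_n^L(s,T)$ is right slice monogenic in $s$ on $\rho_F(T)$.

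Next, fix $I\in\si$ and consider two admissible sets $U_1\subseteq U_2$ with $f$ slice monogenic on a neighbourhood of $\overline{U_2}$; the general case reduces to this by intersecting with a common admissible refinement (possible because $\sigma_F(T)$ is compact by Theorem \ref{compattezaDS} and the family of admissible sets is closed under finite intersection). On the open slice set $U_2\setminus\overline{U_1}$ the integrand $F_n^L(s,T)\,ds_I\,f(s)$ restricts on $\mathbb{C}_I$ to a product of a right-holomorphic and a left-holomorphic $\mathbb{R}_n$-valued function of $s\in\mathbb{C}_I$, so Cauchy's theorem in the complex plane $\mathbb{C}_I$ applied separately to each component gives that the integral over $\pp((U_2\setminus\overline{U_1})\cap\mathbb{C}_I)$ vanishes. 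This yields equality of the integrals over $\pp(U_1\cap\mathbb{C}_I)$ and $\pp(U_2\cap\mathbb{C}_I)$. The argument for the right kernel is identical, using now that $F_n^R(\cdot,T)$ is left slice monogenic and $f$ is right slice monogenic.

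For independence from $I$, fix a single admissible set $U$ and take two units $I,J\in\si$. I would reduce to the standard slice homotopy argument already used in the proof of the Cauchy formula (Theorem \ref{Cauchygenerale}): for $s\in U\setminus\sigma_F(T)$ the axially symmetric open set $U\setminus\sigma_F(T)$ provides, via a path in the unit sphere $\si$ joining $I$ to $J$, a continuous homotopy between the cycles $\pp(U\cap\mathbb{C}_I)$ and $\pp(U\cap\mathbb{C}_J)$ inside the region where $s\mapsto F_n^L(s,T)$ and $s\mapsto f(s)$ are both slice monogenic. The representation formula (Theorem \ref{formulamon}) combined with slice monogenicity of the integrand shows that the $\mathbb{R}_n$-valued differential form $F_n^L(s,T)\,ds_I\,f(s)$, integrated along a circle $[s_0+s_1I]$, varies smoothly with $I$ and has vanishing derivative in $I$; equivalently, the difference between the two boundary integrals equals the integral of a slice-monogenic integrand over a boundary of a region inside $\rho_F(T)$, hence is zero by step (ii). The right integral is handled analogously.

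The main obstacle will be step (i): one must confirm that the operator-valued power $(s^2\mathcal{I}-s(T+\overline{T})+T\overline{T})^{-(n+1)/2}$ is a well-defined right slice monogenic function of $s$ on $\rho_F(T)$, and that the noncommutativity of $\mathbb{R}_n$ does not spoil the differentiation computation. The commutativity hypothesis on the components of $T$ is precisely what is needed so that the scalar computation of Proposition \ref{Laplacian_smonogenic} carries over, but the verification must be done with care because the factor $(s\mathcal{I}-\overline{T})$ does not commute with arbitrary Clifford elements and appears on the right of the fractional power.
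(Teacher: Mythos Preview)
Your steps (i) and (ii) are essentially correct and actually give a more direct route to $U$-independence than the paper takes. The paper instead pairs against $\phi\in V_n'$ and $v\in V_n$ to form the scalar left slice monogenic function $g_{\phi,v}(s)=\langle\phi,F_n^R(s,T)v\rangle$, notes that it vanishes at infinity, represents it by the Cauchy formula over $\partial(U'\cap\mathbb{C}_I)$ for a smaller admissible $U'$, substitutes, applies Fubini together with $S_R^{-1}(s,t)=-S_L^{-1}(t,s)$ and the Cauchy formula for $f$, and concludes by Hahn--Banach. Your Stokes-type argument on a single slice achieves the same conclusion without the dual-space detour. One caveat: the justification ``Cauchy's theorem applied separately to each component'' is not quite right, since the $\mathbb{R}_n$-components of a right slice monogenic function are not individually holomorphic on $\mathbb{C}_I$; the correct statement is that the $1$-form $F_n^L(s,T)\,ds_I\,f(s)$ is closed on $(U_2\setminus\overline{U_1})\cap\mathbb{C}_I$, which follows directly from the right and left Cauchy--Riemann equations satisfied by the two factors.

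Step (iii), however, has a real gap. The object you integrate depends on $I$ not only through the contour $\partial(U\cap\mathbb{C}_I)$ but through the differential $ds_I=-ds\,I$ itself, so there is no single closed $1$-form on $\rho_F(T)\subset\mathbb{R}^{n+1}$ being integrated over two homotopic cycles; the ``homotopy between $\partial(U\cap\mathbb{C}_I)$ and $\partial(U\cap\mathbb{C}_J)$'' does not by itself compare the two integrals. Your attempted reduction to step (ii) then fails, because step (ii) only handles closed curves lying in a \emph{single} plane $\mathbb{C}_I$. The paper sidesteps this by again using the scalar Cauchy formula for $g_{\phi,v}$, whose $I$-independence is already part of Theorem~\ref{Cauchygenerale}: one writes $g_{\phi,v}$ via an integral over $\partial(U'\cap\mathbb{C}_J)$, inserts this into the $\mathbb{C}_I$-integral, swaps the order, and recovers the $\mathbb{C}_J$-integral. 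If you want to avoid functionals, the honest alternative is to invoke the Representation Formula (Theorem~\ref{formulamon}) explicitly to rewrite both $F_n^L(\cdot,T)$ and $f$ on $\mathbb{C}_J$ in terms of their boundary values on $\mathbb{C}_I$, and then reduce to $\mathbb{C}_I$-integrals; but that computation must actually be carried out, not replaced by an unspecified homotopy.
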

\begin{proof}
The case of left slice monogenic functions has been treated in \cite{CoSaSo}.
We only consider the right slice monogenic case. For every continuous linear functional $\phi\in V_n'$ and $v\in V_n$, we define the function
\begin{equation*}
g_{\phi,v}(s) := \left<\phi, F_n^R(s,T)v\right>.
\end{equation*}
Since $\Delta^{\frac{n-1}{2}}S^{-1}_R$ is left s-monogenic in $s$ by Proposition \ref{Laplacian_smonogenic}, the function $g_{\phi,v}$ is left slice monogenic too on $\rho_F(T)$. Furthermore, as $\lim_{s\to\infty}g_{\phi,v}(s) = 0$,  it is also left slice monogenic at $\infty$.

To proof that the integral \eqref{integ311_monRIGHT} does not depend on the open set $U$, we consider another open set $U'$ as in Definition \ref{def3.9_mondsa} with $\sigma_F(T) \subset U' \subset U$. As  every $g_{\phi,v}$ is left slice monogenic on $U'^c\subset\rho_F(T)$ and $g_{\phi,v}(\infty) = 0$, we can apply the Cauchy formula and obtain
\begin{equation}
\label{prf:fc_g_int}
g_{\phi,v}(x) = \frac{1}{2\pi}\int_{\partial(U'\cap\mathbb{C}_I)^-}S_L^{-1}(s,x)ds_I g_{\phi,v}(s) = - \frac{1}{2\pi}\int_{\partial(U'\cap\mathbb{C}_I)}S_L^{-1}(s,x)ds_I g_{\phi,v}(s),
\end{equation}
where $\partial(U'\cap\mathbb{C})^{-}$ is the border of $U'\cap\mathbb{C}$ oriented in the way that includes $U'^{c}$. As $S_R^{-1}(s,x) = - S_L^{-1}(x,s)$ and as $f$ is right slice monogenic on $\overline{U}$, we get
\[
\begin{split}
\big<\phi, \Big[\frac{1}{2\pi}&\int_{\partial(U\cap\mathbb{C}_I)} f(s) ds_I F_n^{R}(s,T)\Big] v\big>
\\
&
= \frac{1}{2\pi}\int_{\partial(U\cap\mathbb{C}_I)}f(s)ds_Ig_{\phi,v}(s)
\\
&
=\frac{1}{2\pi}\int_{\partial(U\cap\mathbb{C}_I)}f(s)ds_I\left[ -\frac{1}{2\pi}\int_{\partial(U'\cap\mathbb{C}_I)^-}S_L^{-1}(t,s)dt_I g_{\phi,v}(t) \right]
\\
&
= \frac{1}{2\pi}\int_{\partial(U'\cap\mathbb{C}_I)}\left[\frac{1}{2\pi}
\int_{\partial(U\cap\mathbb{C}_I)}f(s)ds_IS_R^{-1}(s,t)\right]dt_Ig_{\phi,v}(t)
\\
&
= \frac{1}{2\pi}\int_{\partial(U'\cap \mathbb{C}_I)}f(t)dt_Ig_{\phi,v}(t)
\\
&
= \big< \phi, \left[\frac{1}{2\pi}\int_{\partial(U'\cap\mathbb{C}_I)}f(s)ds_IF_n^{R}(s,T)\right]v\big>.
\end{split}
\]
This equality holds for every $\phi\in V_n'$ and $v\in V_n$. Therefore, by the Hahn-Banach theorem if follows that
\begin{equation*}
\frac{1}{2\pi}\int_{\partial(U\cap\mathbb{C}_I)}f(s)ds_IF_n^{R}(s,T)= \frac{1}{2\pi}\int_{\partial(U'\cap\mathbb{C}_I)}f(s)ds_IF_n^{R}(s,T).
\end{equation*}
Now let $\tilde{U}$ be an arbitrary set as in Definition \ref{def3.9_mondsa} that is not necessarily a subset of U. Then we can find an admissible set $U'$ with $\sigma_F(T) \subset U' \subset U\cap\tilde{U}$ and therefore the integrals over all three sets must agree.

The proof of the independence of the imaginary unit $I$ works analogously. Again we consider an admissible open set $U'$ with $\sigma_F(T)\subset U' \subset U$.  As \eqref{prf:fc_g_int} is independent of the imaginary unit, for an arbitrary $J\in\mathbb{S}$ we have
\[
\begin{split}
\frac{1}{2\pi}\int_{\partial(U\cap\mathbb{C}_I)}f(s)ds_Ig_{\phi,v}(s)
&=\frac{1}{2\pi}\int_{\partial(U\cap\mathbb{C}_I)}f(s)ds_I\left[ -\frac{1}{2\pi}\int_{\partial(U'\cap\mathbb{C}_J)^-}S_L^{-1}(t,s)dt_J g_{\phi,v}(t) \right]
\\
&
= \frac{1}{2\pi}\int_{\partial(U'\cap\mathbb{C}_J)}
\left[\frac{1}{2\pi}\int_{\partial(U\cap\mathbb{C}_I)}f(s)ds_IS_R^{-1}(s,t)\right]dt_Jg_{\phi,v}(t)
\\
&
= \frac{1}{2\pi}\int_{\partial(U'\cap \mathbb{C}_J)}f(t)dt_Jg_{\phi,v}(t).
\end{split}
\]
As we already know that these integrals are independent of the set $U$. Therefore, for every $\phi\in V_n'$ and $v\in V_n$, we have
\begin{equation*}
\frac{1}{2\pi}\int_{\partial(U\cap\mathbb{C}_I)}f(s)ds_Ig_{\phi,v}(s)= \frac{1}{2\pi}\int_{\partial(U\cap \mathbb{C}_J)}f(t)dt_Jg_{\phi,v}(t).
\end{equation*}
and from the Hahn Banach theorem if follows that  the integral \eqref{integ311_monRIGHT} does not depend on the imaginary unit.

\end{proof}

\begin{definition}[The $F$-functional calculus for bounded operators]  Let $n$ be an odd number, let
 $T\in\mathcal{BC}^{\small 0,1}(V_n)$ and set $ds_I=ds/I$ .
   We define the  $F$-functional calculus as
\begin{equation}\label{DEFinteg311_mon}
\breve{f}(T):=\frac{1}{2\pi}\int_{\pp(U\cap \mathbb{C}_I)} F_n^L(s,T) \, ds_I\, f(s),\ \ \ \ \ f\in \mathcal{SM}^L_{\sigma_{F}(T)},
\end{equation}
and
\begin{equation}\label{DEFinteg311_monRIGHT}
\breve{f}(T):=\frac{1}{2\pi}\int_{\pp(U\cap \mathbb{C}_I)} f(s) \, ds_I\, F_n^R(s,T),\ \ \ \ \ f\in \mathcal{SM}^R_{\sigma_{F}(T)},
\end{equation}
where $U$ is admissible for $T$.
\end{definition}

\section{Some relations between the $F$-resolvent operators}
With the position
$$
\mathcal{Q}_s(T):= (s^2 \mathcal{I}-s(T+\overline{T})+T\overline{T})^{-1}, \ \ \ \ s\in \rho_F(T)
$$
we can write the $F$-resolvent operators as
\begin{equation}\label{FresBOUNDLQ}
F_n^L(s,T):=\gamma_n(s\mathcal{I}-\overline{ T})
\mathcal{Q}_s(T)^{\frac{n+1}{2}}
\end{equation}
and the right $F$-resolvent operator as
\begin{equation}\label{FresBOUNDRQ}
F_n^R(s,T):=\gamma_n
\mathcal{Q}_s(T)^{\frac{n+1}{2}}(s\mathcal{I}-\overline{ T}).
\end{equation}

\begin{theorem}[The left and the right $F$-resolvent equations]
\label{FRE}
Let $n$ be an odd number and let $T\in\mathcal{BC}^{0,1}(V_n)$.
Let $s\in \rho_F(T)$ then the $F$-resolvent operators  satisfy the equations
\begin{equation}\label{FREL}
F_n^L(s,T)s-TF_n^L(s,T)=\gamma_n\mathcal{Q}_s(T)^{\frac{n-1}{2}},
\end{equation}
and
\begin{equation}\label{FRER}
sF_n^R(s,T)-F_n^R(s,T)T=\gamma_n\mathcal{Q}_s(T)^{\frac{n-1}{2}}.
\end{equation}
\end{theorem}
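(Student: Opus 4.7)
The plan is to verify both identities by direct algebraic manipulation, using a single key commutativity fact: the operator $\mathcal{Q}_s(T)$, and hence every one of its powers, commutes with the Clifford scalar $s$. This is because the commuting hypothesis on the $T_j$ forces $T+\overline{T}=2T_0$ and $T\overline{T}=T_0^2+T_1^2+\cdots+T_n^2$ to be real scalar operators, so $\mathcal{Q}_s(T)^{-1}=s^2\mathcal{I}-2sT_0+T\overline{T}\,\mathcal{I}$ is a polynomial in $s$ whose remaining coefficients are real scalar operators. Real scalars commute with every Clifford number (including $s$) and with every operator in $\mathcal{B}(V_n)$, and this commutativity persists under inverses and powers.

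For the left resolvent equation \eqref{FREL}, I substitute $F_n^L(s,T)=\gamma_n(s\mathcal{I}-\overline{T})\mathcal{Q}_s(T)^{(n+1)/2}$ into the left-hand side. In the term $F_n^L(s,T)\,s$ I slide $s$ past the central factor $\mathcal{Q}_s(T)^{(n+1)/2}$ using the commutativity above, obtaining $\gamma_n(s\mathcal{I}-\overline{T})\,s\,\mathcal{Q}_s(T)^{(n+1)/2}$; in the term $T\,F_n^L(s,T)$ I leave the power untouched on the right. Factoring the common right-factor $\gamma_n\,\mathcal{Q}_s(T)^{(n+1)/2}$ reduces everything to the purely algebraic operator identity
\[
(s\mathcal{I}-\overline{T})\,s\;-\;T\,(s\mathcal{I}-\overline{T})\;=\;\mathcal{Q}_s(T)^{-1}.
\]
Expanding the left side gives $s^2\mathcal{I}-\overline{T}s-Ts+T\overline{T}$, and since $T+\overline{T}=2T_0$ is a real scalar operator we may combine $Ts+\overline{T}s=(T+\overline{T})s=s(T+\overline{T})$, which turns the expression into $s^2\mathcal{I}-s(T+\overline{T})+T\overline{T}=\mathcal{Q}_s(T)^{-1}$, as claimed. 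Multiplying back on the right by $\mathcal{Q}_s(T)^{(n+1)/2}$ yields $\gamma_n\,\mathcal{Q}_s(T)^{(n-1)/2}$.

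The right resolvent equation \eqref{FRER} is handled symmetrically: I factor $\gamma_n\,\mathcal{Q}_s(T)^{(n+1)/2}$ out on the \emph{left}, slide $s$ through it in the first term, and reduce to the mirror identity $s(s\mathcal{I}-\overline{T})-(s\mathcal{I}-\overline{T})T=\mathcal{Q}_s(T)^{-1}$, which again follows from $T\overline{T}=\overline{T}T$ and the scalar nature of $T+\overline{T}$.

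The subtle point to be careful with is that $\mathcal{Q}_s(T)$ does \emph{not} in general commute with $T$ or $\overline{T}$ individually once $n\geq 2$; it only commutes with their sum and with their product. The proof therefore must avoid ever pushing $\mathcal{Q}_s(T)^{(n+1)/2}$ across $T$ or $\overline{T}$. The argument above sidesteps this by factoring the $\mathcal{Q}_s(T)$-power out on a single side and doing all the non-commutative work in the bracket, where only the scalar combinations $T+\overline{T}$ and $T\overline{T}$ actually appear.
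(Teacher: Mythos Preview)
Your argument is correct and follows essentially the same route as the paper's proof: slide the scalar $s$ through the power $\mathcal{Q}_s(T)^{(n+1)/2}$, factor that power out on one side, and reduce to the algebraic identity $(s\mathcal{I}-\overline{T})s-T(s\mathcal{I}-\overline{T})=\mathcal{Q}_s(T)^{-1}$ (respectively its mirror). In fact your write-up is more careful than the paper's own proof, which silently uses the commutation of $s$ with $\mathcal{Q}_s(T)$ without justification and contains a typo in the final exponent; your explicit explanation that $T+\overline{T}=2T_0$ and $T\overline{T}=\sum_{j=0}^n T_j^2$ are real scalar operators is exactly what makes that step legitimate.
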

\begin{proof}
Relation (\ref{FREL}) was proved in \cite{NOTIONCONV}. The second relation follows from
$$
F_n(s,T)s=\gamma_n(s\mathcal{I}-\overline{ T})s
\mathcal{Q}_s(T)^{\frac{n+1}{2}}
$$
and
$$
TF_n(s,T)=\gamma_n( Ts-T\overline{ T})
\mathcal{Q}_s(T)^{\frac{n+1}{2}}.
$$
Taking the difference we obtain
$$
F_n(s,T)s-TF_n(s,T)=\gamma_n(s^2 \mathcal{I}-s(T+\overline{T})+T\overline{T})\mathcal{Q}_s(T)^{\frac{n+1}{2}} = \gamma_n\mathcal{Q}_s(T)^{\frac{n+1}{2}}.
$$
\end{proof}

\begin{theorem}[Left and right generalized $F$-resolvent equations]
Let $n$ be an odd number, $T\in \mathcal{BC}^{0,1}(V_n)$ and set
\begin{multline*}
\mathcal{M}^L_m(s,T) := \gamma_n\sum_{i=0}^{m-1}T^i\mathcal{Q}_s(T)^{\frac{n-1}{2}}s^{m-1-i}=\\
=\gamma_n(\mathcal{Q}_s(T)^{\frac{n-1}{2}}s^{m-1} + T\mathcal{Q}_s(T)^{\frac{n-1}{2}}s^{m-2} + \ldots + T^{m-2}{Q}_s(T)^{\frac{n-1}{2}}s + T^{m-1}{Q}_s(T)^{\frac{n-1}{2}})
\end{multline*}
and
\begin{multline*}
\mathcal{M}^R_m(s,T) := \gamma_n\sum_{i=0}^{m-1}s^{m-1-i}\mathcal{Q}_s(T)^{\frac{n-1}{2}}T^{i}=\\
=\gamma_n(s^{m-1}\mathcal{Q}_s(T)^{\frac{n-1}{2}} + s^{m-2} \mathcal{Q}_s(T)^{\frac{n-1}{2}}T+ \ldots + s{Q}_s(T)^{\frac{n-1}{2}}T^{m-2} + T^{m-1}{Q}_s(T)^{\frac{n-1}{2}}).
\end{multline*}
Then for $m\in\mathbb{N}_0$ and $s\in\rho_F(T)$ the following equations hold
\begin{equation}
\label{GFREL}
T^mF_n^L(s,T)  = F_n^L(s,T)s^m  -\mathcal{M}^L_m(s,T),
\end{equation}
\begin{equation}
\label{GFRER}
F_n^R(s,T)T^m = s^m F_n^R(s,T) - \mathcal{M}^R_m(s,T).
\end{equation}
\end{theorem}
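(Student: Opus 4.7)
The plan is to establish both identities by induction on $m$, using the $F$-resolvent equations of Theorem \ref{FRE} as the engine. I will detail the left version \eqref{GFREL}; the right version \eqref{GFRER} is mirror-symmetric, obtained by right-multiplying by $T$ and invoking \eqref{FRER} instead of \eqref{FREL}, together with a symmetric index shift in the right-sided sum defining $\mathcal{M}_m^R(s,T)$.

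For the base cases: when $m=0$, both sides reduce to $F_n^L(s,T)$ since the empty sum defining $\mathcal{M}_0^L(s,T)$ vanishes. When $m=1$, one has $\mathcal{M}_1^L(s,T) = \gamma_n \mathcal{Q}_s(T)^{\frac{n-1}{2}}$, so \eqref{GFREL} is just a rearrangement of \eqref{FREL}.

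For the inductive step, assume \eqref{GFREL} holds for some $m\ge 1$. Left-multiplying both sides by $T$, applying \eqref{FREL} to rewrite $T F_n^L(s,T)$ as $F_n^L(s,T) s - \gamma_n \mathcal{Q}_s(T)^{\frac{n-1}{2}}$, and using $s \cdot s^m = s^{m+1}$, I obtain
\[
T^{m+1} F_n^L(s,T) = F_n^L(s,T) s^{m+1} - \gamma_n \mathcal{Q}_s(T)^{\frac{n-1}{2}} s^m - T \mathcal{M}_m^L(s,T).
\]
It then remains to verify
\[
\gamma_n \mathcal{Q}_s(T)^{\frac{n-1}{2}} s^m + T \mathcal{M}_m^L(s,T) = \mathcal{M}_{m+1}^L(s,T),
\]
which is a combinatorial index shift: reindexing $j=i+1$ in $T \mathcal{M}_m^L(s,T)$ yields $\gamma_n \sum_{j=1}^{m} T^j \mathcal{Q}_s(T)^{\frac{n-1}{2}} s^{m-j}$, and the additional term $\gamma_n \mathcal{Q}_s(T)^{\frac{n-1}{2}} s^m$ supplies the missing $j=0$ summand.

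No step is really an obstacle here: once \eqref{FREL} and \eqref{FRER} are in hand, the argument is purely algebraic. The commutativity assumption on the components of $T$ enters only implicitly through the base $F$-resolvent equations; the left-$T$ and right-$s$ actions in $\mathcal{M}_m^L(s,T)$ (and their reverse in $\mathcal{M}_m^R(s,T)$) are never moved past each other during the induction, so no reordering is required.
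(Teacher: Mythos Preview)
Your proof is correct and follows essentially the same approach as the paper: induction on $m$, with the step driven by left-multiplying the inductive hypothesis by $T$, invoking \eqref{FREL}, and recognizing the resulting expression as $\mathcal{M}^L_{m+1}(s,T)$ via the index shift you describe. The only cosmetic difference is that the paper passes from $m-1$ to $m$ while you pass from $m$ to $m+1$.
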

\begin{proof}
The proof works by induction. We will only show (\ref{GFREL}) as it works analogously for the second equation. For $m=0$ the statement is trivial, for $m=1$ it is the $F$-resolvent equation, Theorem \ref{FRE}.  Assume that \eqref{GFREL} holds for $m-1$. Then we have
\begin{equation}\label{GFRE-step1}
\begin{split}
T^{m}F_n^L(s,T)& = T(F_n^L(s,T)s^{m-1} - \mathcal{M}_{m-1}(s,T))
\\
&
= TF_n^L(s,T)s^{m-1} - T\mathcal{M}_{m-1}(s,T).
\end{split}
\end{equation}
As
\begin{equation*}
\begin{split}
T\mathcal{M}_{m-1}(s,T) &= \gamma_nT\sum_{i=0}^{m-2}T^{i}\mathcal{Q}_s(T)^{\frac{n-1}{2}}s^{m-2-i}
\\
&
= \gamma_n\sum_{i=1}^{m-1}T^{i}\mathcal{Q}_s(T)^{\frac{n-1}{2}}s^{m-1-i},
\end{split}
\end{equation*}
by applying the resolvent equation \eqref{FREL} in \eqref{GFRE-step1}, we obtain
\[
\begin{split}
T^mF_N^L(s,T) &= F_n^L(s,T)s^{m} - \gamma_nQ_s(T)^{\frac{n-1}{2}}s^{m-1} - \gamma_n\sum_{i=1}^{m-1}T^{i}\mathcal{Q}_s(T)^{\frac{n-1}{2}}s^{m-1-i}
\\
&
= F_n^L(s,T)s^{m} - \gamma_n\sum_{i=0}^{m-1}T^{i}\mathcal{Q}_s(T)^{\frac{n-1}{2}}s^{m-1-i}
\\
&
= F_n^L(s,T)s^m  -\mathcal{M}_m(s,T).
\end{split}
\]

\end{proof}

\begin{theorem}[The pseudo $F$-resolvent equation]\label{THPSEUDO}
Let $n$ be an odd number and let $T\in\mathcal{BC}^{0,1}(V_n)$.
Then for $p,s\in \rho_F(T)$ the following equation holds
\begin{multline*}
F_n^{R}(s,T)F_n^{L}(p,T) = \left[ \left( F^R_n(s,T) \gamma_n \mathcal{Q}^{\frac{n-1}{2}}_p(T) - \gamma_n\mathcal{Q}^{\frac{n-1}{2}}_s(T)F_n^L(p,T)\right)p\right. -\\
\left.-\bar{s}\left( F^R_n(s,T) \gamma_n \mathcal{Q}^{\frac{n-1}{2}}_p(T) - \gamma_n\mathcal{Q}^{\frac{n-1}{2}}_s(T)F_n^L(p,T)\right) \right](p^2 - 2s_0p + |s|^2)^{-1}.
\end{multline*}
\end{theorem}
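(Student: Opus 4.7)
The plan is to derive the pseudo $F$-resolvent equation by a purely algebraic manipulation, using only the two one-sided $F$-resolvent equations of Theorem \ref{FRE} together with the fact that the real scalars $s+\bar s = 2s_0$ and $s\bar s = |s|^2$ commute with every Clifford number and every operator.

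The first step is to rewrite the equations of Theorem \ref{FRE} as
\begin{equation*}
sF_n^R(s,T) = F_n^R(s,T)T + \gamma_n\mathcal{Q}_s(T)^{(n-1)/2},\qquad F_n^L(p,T)p = TF_n^L(p,T) + \gamma_n\mathcal{Q}_p(T)^{(n-1)/2},
\end{equation*}
and to compute both $F_n^R(s,T)F_n^L(p,T)p$ and $sF_n^R(s,T)F_n^L(p,T)$ using these identities. Each expansion produces the common term $F_n^R(s,T)TF_n^L(p,T)$, which cancels on subtraction and yields the auxiliary identity
\begin{equation*}
F_n^R(s,T)F_n^L(p,T)p - sF_n^R(s,T)F_n^L(p,T) = E(s,p,T),
\end{equation*}
where I set $E(s,p,T) := F_n^R(s,T)\gamma_n\mathcal{Q}_p(T)^{(n-1)/2} - \gamma_n\mathcal{Q}_s(T)^{(n-1)/2}F_n^L(p,T)$, i.e.\ exactly the bracketed building block appearing on the right-hand side of the statement.

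Next, I would multiply this auxiliary identity on the right by $p$ and separately on the left by $-\bar s$, and add the two resulting equations. On the right-hand side this produces $E(s,p,T)p - \bar sE(s,p,T)$. On the left-hand side the four terms regroup as
\begin{equation*}
F_n^R(s,T)F_n^L(p,T)p^2 - (s+\bar s)F_n^R(s,T)F_n^L(p,T)p + \bar ssF_n^R(s,T)F_n^L(p,T),
\end{equation*}
and here is where the scalar nature of $s+\bar s$ and $\bar ss = s\bar s = |s|^2$ enters decisively: because both are real, they slide past $F_n^R(s,T)F_n^L(p,T)$, and the left-hand side collapses to $F_n^R(s,T)F_n^L(p,T)(p^2 - 2s_0p + |s|^2)$.

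Finally, multiplying on the right by $(p^2 - 2s_0p + |s|^2)^{-1}$ yields the claim; this factor is invertible under the implicit assumption $p \notin [s]$, since $p^2 - 2s_0p + |s|^2 = (p-s_0)^2 + |\underline s|^2$ vanishes precisely on $[s]$. The argument presents no genuine obstacle, being purely formal algebra; the only point requiring care is to keep left and right multiplication separate, because $s$, $\bar s$ and $p$ do not commute with each other or with the $F$-resolvent operators, and one must postpone any attempt to move paravectors past these operators until after the combinations $s+\bar s$ and $s\bar s$ have been formed.
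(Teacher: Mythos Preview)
Your proof is correct and in fact somewhat cleaner than the paper's. Both arguments establish the identity
\[
F_n^R(s,T)F_n^L(p,T)(p^2 - 2s_0p + |s|^2) = E(s,p,T)p - \bar{s}E(s,p,T)
\]
and then invert the scalar factor, but they organize the computation differently. The paper expands $F_n^R F_n^L p^2$ directly by invoking the \emph{generalized} $F$-resolvent equations \eqref{GFREL}--\eqref{GFRER} (the case $m=2$, which brings in $T^2$ and the operators $\mathcal{M}_2$), then does the same for the $2s_0p$ term, and finally simplifies using $s^2 - 2s_0s + |s|^2 = 0$. You instead isolate the first-order auxiliary identity $F_n^R F_n^L p - s F_n^R F_n^L = E$, so that only Theorem~\ref{FRE} is needed, and then exploit the factorization $(p^2 - 2s_0p + |s|^2) = (\,\cdot\, p - \bar s\,\cdot\,)\circ(\,\cdot\, p - s\,\cdot\,)$ applied to $F_n^R F_n^L$. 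Your route is shorter and avoids the detour through the generalized equations; the paper's route, on the other hand, makes the appearance of the quadratic polynomial in $p$ more transparent term by term. Your remark that the hypothesis $p\notin[s]$ is implicit in the statement (via the presence of $(p^2-2s_0p+|s|^2)^{-1}$) is also well taken; the paper does not comment on this point.
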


\begin{proof}
We proof the statement by showing that
\[
\begin{split}
F_n^{R}(s,T)F_n^{L}(p,T)(p^2 - 2s_0p + |s|^2) &= \Big( F^R_n(s,T) \gamma_n \mathcal{Q}^{\frac{n-1}{2}}_p(T)
 -\gamma_n\mathcal{Q}^{\frac{n-1}{2}}_s(T)F_n^L(p,T)\Big)p
 \\
 &
-\bar{s}\Big( F^R_n(s,T) \gamma_n \mathcal{Q}^{\frac{n-1}{2}}_p(T) - \gamma_n\mathcal{Q}^{\frac{n-1}{2}}_s(T)F_n^L(p,T)\Big).
\end{split}
\]
If we apply the generalized $F$-resolvent equations \eqref{GFREL} and \eqref{GFRER}, we obtain
\[
\begin{split}
F_n^R(s,T)&F_n^L(p,T) p^2
\\
&
= F_n^R(s,T)[T^2F_n^L(p,T) + T\gamma_n\mathcal{Q}_p^{\frac{n-1}{2}}(T) + \gamma_n\mathcal{Q}_p^{\frac{n-1}{2}}(T)p]
\\
&=
 F_n^R(s,T)T^2F_n^L(p,T) + F_n^R(s,T)T\gamma_n\mathcal{Q}_p^{\frac{n-1}{2}}(T) + F_n^R(s,T)\gamma_n\mathcal{Q}_p^{\frac{n-1}{2}}(T)p
 \\
&=
[s^2F_n^R(s,T) -\gamma_n \mathcal{Q}_s^{\frac{n-1}{2}}(T)T - s\gamma_n\mathcal{Q}_s^{\frac{n-1}{2}}(T) ]F_n^L(p,T)
\\
&
+ [sF_n^R(s,T) -\gamma_n \mathcal{Q}_s^{\frac{n-1}{2}}(T)]\gamma_n\mathcal{Q}_p^{\frac{n-1}{2}}(T) + F_n^R(s,T)\gamma_n\mathcal{Q}_p^{\frac{n-1}{2}}(T)p
\\
&
=s^2F_n^R(s,T)F_n^L(p,T) -\gamma_n \mathcal{Q}_s^{\frac{n-1}{2}}(T)TF_n^L(p,T) - s\gamma_n\mathcal{Q}_s^{\frac{n-1}{2}}(T) F_n^L(p,T)
\\
&
+ sF_n^R(s,T)\gamma_n\mathcal{Q}_p^{\frac{n-1}{2}}(T) -\gamma_n^2 \mathcal{Q}_s^{\frac{n-1}{2}}(T)\mathcal{Q}_p^{\frac{n-1}{2}}(T) + F_n^R(s,T)\gamma_n\mathcal{Q}_p^{\frac{n-1}{2}}(T)p.
\end{split}
\]
In the a similar way, by applying the $F$-resolvent equations \eqref{FREL} and \eqref{FRER}, we obtain
\[
\begin{split}
F_n^R(s,T)F_n^L(p,T)2s_0p &= 2s_0F_n^R(s,T)[TF_n^L(p,T) +\gamma_n\mathcal{Q}_p^{\frac{n-1}{2}}(T)]
 \\
&
= 2s_0[sF_n^R(s,T) - \gamma_n\mathcal{Q}_s^{\frac{n-1}{2}}(T) ]F_n^L(p,T) + 2s_0F_n^R(s,T)\gamma_n\mathcal{Q}_p^{\frac{n-1}{2}}(T)
\\
&
= 2s_0sF_n^R(s,T)F_n^L(p,T) - 2s_0\gamma_n\mathcal{Q}_s^{\frac{n-1}{2}}(T)F_n^L(p,T)
\\
&+ 2s_0F_n^R(s,T)\gamma_n\mathcal{Q}_p^{\frac{n-1}{2}}(T).
\end{split}
\]
Therefore we have
\[
\begin{split}
F_n^R(s,T)F_n^L(p,T)&(p^2 - 2s_0s + \vert s\vert^2)
\\
&
= (s^2-2s_0s +\vert s \vert^2)F_n^R(s,T)F_n^L(p,T) -\gamma_n\mathcal{Q}_s^{\frac{n-1}{2}}(T)TF_n^L(p,T)
\\
&
+ (2s_0-s)\gamma_n\mathcal{Q}_s^{\frac{n-1}{2}}(T)F_n^L(p,T) + (s-2s_0)F_n^R(s,T)\gamma_nQ_p^{\frac{n-1}{2}}(T)
\\
&
-\gamma_n^2\mathcal{Q}_s^{\frac{n-1}{2}}(T)\mathcal{Q}_p^{\frac{n-1}{2}}(T) + F_n^R(s,T)\gamma_n\mathcal{Q}_p^{\frac{n-1}{2}}(T)p.
\end{split}
\]
As $2s_0 - s = \bar{s}$ and $s^2 - 2s_0s+ \vert s\vert^2 = 0$, by applying the $F$-resolvent equation \eqref{FREL} once again to $\gamma_n\mathcal{Q}_s^{\frac{n-1}{2}}(T)TF_n^L(p,T)$, we get
\[
\begin{split}
F_n^R(s,T)F_n^L(p,T)&(p^2 - 2s_0s + \vert s\vert^2)
\\
&
= -\gamma_n\mathcal{Q}_s^{\frac{n-1}{2}}(T)[F_n^L(p,T)p - \gamma_n\mathcal{Q}_p^{\frac{n-1}{2}}(T)]
 \\
 &
-\bar{s}(F_n^R(s,T)\gamma_nQ_p^{\frac{n-1}{2}}(T) - \gamma_n\mathcal{Q}_s^{\frac{n-1}{2}}(T)F_n^L(p,T)) \\
&
-\gamma_n^2\mathcal{Q}_s^{\frac{n-1}{2}}(T)\mathcal{Q}_p^{\frac{n-1}{2}}(T) + F_n^R(s,T)\gamma_n\mathcal{Q}_p^{\frac{n-1}{2}}(T)p
\\
&
=[F_n^R(s,T)\gamma_nQ_p^{\frac{n-1}{2}}(T) - \gamma_n\mathcal{Q}_s^{\frac{n-1}{2}}(T)F_n^L(p,T)]p
 \\
 &
 - \bar{s}[F_n^R(s,T)\gamma_nQ_p^{\frac{n-1}{2}}(T) - \gamma_n\mathcal{Q}_s^{\frac{n-1}{2}}(T)F_n^L(p,T)].
\end{split}
\]

\end{proof}

 The pseudo $F$-resolvent equation can be written in terms of the $F$-resolvent operators only using the left and the right $F$-resolvent equations.
\begin{corol}[The pseudo $F$-resolvent equation, form II]
Let $n$ be an odd number and let $T\in\mathcal{BC}^{0,1}(V_n)$.
Then for $p,s\in \rho_F(T)$ the following equation holds
\begin{multline*}
F_n^{R}(s,T)F_n^{L}(p,T)
\\
= \left[ \left( F^R_n(s,T)(F_n^L(p,T)p-TF_n^L(p,T)) - (sF_n^R(s,T)-F_n^R(s,T)T)F_n^L(p,T)\right)p\right. -\\
\left.-\bar{s}\left( F^R_n(s,T)(F_n^L(p,T)p-TF_n^L(p,T)) - (sF_n^R(s,T)-F_n^R(s,T)T)F_n^L(p,T)\right) \right](p^2 - 2s_0p + |s|^2)^{-1}.
\end{multline*}
\end{corol}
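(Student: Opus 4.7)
The plan is very short: the Corollary is essentially a rewriting of Theorem \ref{THPSEUDO}, so I would simply substitute the expressions for $\gamma_n\mathcal{Q}_p(T)^{\frac{n-1}{2}}$ and $\gamma_n\mathcal{Q}_s(T)^{\frac{n-1}{2}}$ coming from the left and right $F$-resolvent equations (Theorem \ref{FRE}) directly into the formula of Theorem \ref{THPSEUDO}.

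More precisely, from equation \eqref{FREL} applied at the point $p$ I read off
\[
\gamma_n \mathcal{Q}_p(T)^{\frac{n-1}{2}} = F_n^L(p,T)p - T F_n^L(p,T),
\]
and from equation \eqref{FRER} applied at the point $s$ I read off
\[
\gamma_n \mathcal{Q}_s(T)^{\frac{n-1}{2}} = s F_n^R(s,T) - F_n^R(s,T) T.
\]
Then I plug these two identities into the expression
\[
F_n^R(s,T)\gamma_n\mathcal{Q}_p(T)^{\frac{n-1}{2}} - \gamma_n\mathcal{Q}_s(T)^{\frac{n-1}{2}}F_n^L(p,T),
\]
which is the bracket that appears (multiplied by $p$ on the right and preceded by $-\bar s$) in the statement of Theorem \ref{THPSEUDO}. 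After substitution this bracket becomes
\[
F_n^R(s,T)\bigl(F_n^L(p,T)p - TF_n^L(p,T)\bigr) - \bigl(sF_n^R(s,T) - F_n^R(s,T)T\bigr)F_n^L(p,T),
\]
which is precisely the bracket that is to appear in the statement of the corollary. Carrying this substitution out in both occurrences of the bracket in Theorem \ref{THPSEUDO} and keeping the factor $(p^2 - 2s_0 p + |s|^2)^{-1}$ on the right yields the form II of the pseudo $F$-resolvent equation.

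There is essentially no obstacle: the content of the corollary is purely notational, and the only point worth noting is that on the right-hand side of the bracket the variable is $p$ (so we use the left $F$-resolvent equation at $p$), while on the left-hand side the variable is $s$ (so we use the right $F$-resolvent equation at $s$). Since no cancellations are needed and the substitution is term by term, no expansion of products or further manipulation is required.
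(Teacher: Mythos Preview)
Your proposal is correct and matches the paper's own proof, which simply states that the corollary is a direct consequence of Theorem \ref{THPSEUDO} and Theorem \ref{FRE}. The substitution you describe is exactly what is intended.
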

\begin{proof}
It is a direct consequence of Theorem \ref{THPSEUDO} and of Theorem \ref{FRE}.

\end{proof}

\begin{remark}{\rm
We conclude this section with an important property of the operators $\breve{P}_j$, defined in (\ref{pigei}),
 that in dimension $n=3$ are the Riesz projectors associated to a given
 paravector operator $T$ with commuting components as it is proved in Section 7.
 The case $n>3$ is still under investigation and it is related to the structure of the $F$-resolvent equation
 in dimension $n>3$.
}
\end{remark}
We begin by recalling the definition of projectors and
some of their basic properties.

\begin{definition}
Let $V$ be a  Banach module. We say that $P$ is a projector if $P^2=P$.
\end{definition}

For readability purposes, we put the proof of the following lemma in the last section of this paper.

\begin{lemma}\label{Pn=gamman}
Let $n$ be an odd number and let $\mathcal{P}_{n-1,n} = \Delta^{\frac{n-1}{2}}x^{n-1}$ be the monogenic polynomial defined on $\mathbb{R}^{n+1}$. Then we have $\mathcal{P}_{n-1,n} \equiv \gamma_n$.
\end{lemma}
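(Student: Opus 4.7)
The plan has two steps: first establish that $\mathcal{P}_{n-1,n}$ must be a constant, then evaluate that constant at a convenient point via the Fueter--Sce integral formula.

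For the first step I would argue by degree count. Since $x = x_0 + \sum_{j=1}^n e_j x_j$ is linear in the real variables $x_0, x_1, \dots, x_n$, the Clifford power $x^{n-1}$ is a polynomial of homogeneous total degree $n-1$ in these variables (with coefficients in $\mathbb{R}_n$). Each application of the Laplacian $\Delta = \sum_{i=0}^n \partial^2/\partial x_i^2$ lowers total degree by $2$, so $\Delta^{(n-1)/2}(x^{n-1})$ is homogeneous of degree zero---that is, a constant element of $\mathbb{R}_n$.

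For the second step I would apply the Fueter--Sce mapping theorem in integral form to the (entire) left slice monogenic function $f(s) = s^{n-1}$ on any axially symmetric s-domain $U$ containing the origin (for instance a ball centered at $0$). This yields
$$\mathcal{P}_{n-1,n} = \Delta^{(n-1)/2} x^{n-1} = \frac{1}{2\pi}\int_{\partial(U \cap \mathbb{C}_I)} F_n^L(s,x)\, ds_I\, s^{n-1}$$
for any $x \in U$. Because the left-hand side is constant, I may evaluate at $x = 0$. At that point the kernel simplifies to $F_n^L(s,0) = \gamma_n\, s(s^2)^{-(n+1)/2} = \gamma_n s^{-n}$, and since $s$, $ds_I$, and $s^{n-1}$ all lie in the commutative plane $\mathbb{C}_I$, the integrand collapses to $\gamma_n s^{-1}\, ds_I$. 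Parametrizing the integration contour as a circle $s = R e^{I\theta}$, $\theta \in [0, 2\pi]$, a routine calculation gives $\int_{\partial(U \cap \mathbb{C}_I)} s^{-1}\, ds_I = 2\pi$, so $\mathcal{P}_{n-1,n} = \gamma_n$.

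No step is genuinely delicate. The degree argument is immediate from the linearity of $x$ in $x_0,\dots,x_n$, and once $x=0$ is chosen the computation reduces to the elementary Cauchy integral $\oint s^{-1}\, ds_I = 2\pi$ on the slice $\mathbb{C}_I$. The only bookkeeping is verifying that $s^{-n} \cdot s^{n-1} = s^{-1}$ collapses correctly, which is valid because all factors lie in a single commutative slice.
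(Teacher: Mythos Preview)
Your proof is correct and takes a genuinely different---and considerably shorter---route than the paper's.

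The key idea you add is the homogeneity/degree argument: since $x^{n-1}$ is homogeneous of total degree $n-1$ in $x_0,\dots,x_n$ and each application of $\Delta$ drops the degree by $2$, the result $\Delta^{(n-1)/2}x^{n-1}$ is a constant in $\mathbb{R}_n$. This lets you evaluate at the single convenient point $x=0$, where the kernel collapses to $\gamma_n s^{-n}$ and the whole integral reduces to the elementary Cauchy integral $\oint s^{-1}\,ds_I=2\pi$ on the slice $\mathbb{C}_I$.

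The paper does not make this observation. Instead it evaluates the Fueter--Sce integral at a \emph{generic} $x$ by choosing $I=I_x$ so that $s$, $x$, $\bar{x}$ all lie in a common complex plane; the kernel then factors as $\gamma_n(s-x)^{-(m+1)}(s-\bar{x})^{-m}$ with $m=(n-1)/2$, and the integral is computed via residues at the two poles $x$ and $\bar{x}$. Summing those residues requires an auxiliary binomial identity (stated and proved separately) and a couple of pages of careful combinatorial manipulation to show that the result is $\gamma_n$ independently of $x$.

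Your approach buys brevity and conceptual clarity at essentially no cost: the degree count is immediate, and once $x=0$ is chosen the computation is a one-line residue. The paper's approach verifies the identity directly at every $x$ and makes the cancellation mechanism explicit, but pays for this with substantial combinatorics that your argument sidesteps entirely.
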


Lemma \ref{Pn=gamman} motivates the definition of the operators $\breve{P}_j$.

\begin{theorem}\label{PTcomFCAL}
Let $n$ be an odd number, $T\in\mathcal{BC}^{\small 0,1}(V_n)$, $f\in\mathcal{SM}^L_{\sigma_F(T)}$.
Let $\sigma_F(T)= \sigma_{1F}\cup \sigma_{2F}$,
with ${\rm dist}\,( \sigma_{1F},\sigma_{2F})>0$. Let $U_1$ and
$U_2$ be two  admissible sets for $T$ such that  $\sigma_{1F} \subset U_1$ and $
\sigma_{2F}\subset U_2$,  with $\overline{U}_1
\cap\overline{U}_2=\emptyset$. Set
\begin{align}\label{pigei}
&\breve{P}_j:=\frac{\gamma_{n}^{-1}}{2\pi }\int_{\partial (U_j\cap \mathbb{C}_I)}F_n^L(s,T) \,
ds_Is^{n-1}, \ \ \ \ \ j=1,2,\\
\label{tigei}
&\breve{T}_j:=\frac{\gamma_{n}^{-1}}{2\pi }\int_{\partial (U_j\cap \mathbb{C}_I)}F_n^L(s,T)
ds_I\,s^{n}-\frac{1}{2\pi}\int_{\partial(U_j\cap\mathbb{C}_I)}\mathcal{Q}_s(T)^{\frac{n-1}{2}}ds_I s^{n-1} , \ \ \ \  j=1,2.
\end{align}
Then the following properties hold:
\begin{itemize}
\item[(1)]\label{PTcomFCAL1}
$T\breve{P}_j=\breve{P}_jT = T_j$ for $j=1,2$.
\item[(2)]For $\lambda\in\rho_F(T)$ we have
\begin{align}
\label{FREL_subs}\breve{P}_jF_n^{L}(\lambda,T)\lambda -\breve{T}_jF_n^{L}(\lambda,T) &= \breve{P}_j\gamma_n\mathcal{Q}_\lambda(T)^{\frac{n-1}{2}}\qquad j = 1,2.\\
\label{FRER_subs}\lambda F_n^{R}(\lambda,T) \breve{P}_j - F_n^{R}(\lambda,T)\breve{T}_j &=\gamma_n\mathcal{Q}_\lambda(T)^{\frac{n-1}{2}} \breve{P}_j\qquad j = 1,2.
\end{align}
\end{itemize}
\end{theorem}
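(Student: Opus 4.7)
The plan is to deduce both assertions from the $F$-resolvent equations of Theorem \ref{FRE} and from the observation that, in the commutative setting $\mathcal{BC}^{0,1}(V_n)$, the operator $T$ commutes with $F_n^L(s,T)$ pointwise in $s$.

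For (1), I first show $T\breve{P}_j=\breve{T}_j$. Using the left $F$-resolvent equation \eqref{FREL} pointwise in $s$, I rewrite $TF_n^L(s,T)=F_n^L(s,T)s-\gamma_n\mathcal{Q}_s(T)^{\frac{n-1}{2}}$. Since $T\in\mathcal{B}(V_n)$ is bounded it commutes with the (Bochner) integral, so
\[
T\breve{P}_j=\frac{\gamma_n^{-1}}{2\pi}\int_{\partial(U_j\cap\mathbb{C}_I)} TF_n^L(s,T)\,ds_I\,s^{n-1},
\]
and substituting the above rewriting produces exactly the two integrals in \eqref{tigei} that define $\breve{T}_j$. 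For the remaining equality $\breve{P}_jT=\breve{T}_j$ it then suffices to check $T\breve{P}_j=\breve{P}_jT$. This is where the commuting-components hypothesis enters: $T$ commutes with $\overline{T}$, hence with $s\mathcal{I}-\overline{T}$ and with the inverse $\mathcal{Q}_s(T)$, so $T$ commutes with $F_n^L(s,T)=\gamma_n(s\mathcal{I}-\overline{T})\mathcal{Q}_s(T)^{\frac{n+1}{2}}$ for every $s\in\rho_F(T)$, and therefore with the integral $\breve{P}_j$.

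Part (2) is then a direct algebraic consequence of (1) combined with Theorem \ref{FRE}. Substituting $\breve{T}_j=\breve{P}_jT$ into the left-hand side of \eqref{FREL_subs} and factoring $\breve{P}_j$ out on the left yields
\[
\breve{P}_j\bigl[F_n^L(\lambda,T)\lambda - TF_n^L(\lambda,T)\bigr]=\breve{P}_j\gamma_n\mathcal{Q}_\lambda(T)^{\frac{n-1}{2}},
\]
which is precisely the left $F$-resolvent equation \eqref{FREL} at $s=\lambda$ multiplied on the left by $\breve{P}_j$. Analogously, substituting $\breve{T}_j=T\breve{P}_j$ into the left-hand side of \eqref{FRER_subs} and factoring $\breve{P}_j$ on the right reduces the identity to the right $F$-resolvent equation \eqref{FRER} multiplied on the right by $\breve{P}_j$.

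The only genuine point requiring verification is the commutation $TF_n^L(s,T)=F_n^L(s,T)T$, which is not a formal identity and relies on the commutativity of the components $T_0,\ldots,T_n$; this is the main (though very mild) obstacle. Once it is in place, no contour integration needs to be reopened: (1) collapses to an application of \eqref{FREL} inside the integral defining $\breve{P}_j$, and (2) is a purely algebraic rearrangement in $\mathcal{B}(V_n)$.
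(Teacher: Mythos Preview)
Your computation of $T\breve{P}_j=\breve{T}_j$ via the left $F$-resolvent equation \eqref{FREL} is correct and is exactly what the paper does. Part (2) is also fine and coincides with the paper's argument.

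The gap is in your proof of $\breve{P}_jT=\breve{T}_j$. The assertion that $T$ commutes with $F_n^L(s,T)$ pointwise in $s$ is \emph{false}. From ``$T$ commutes with $\overline{T}$'' you cannot conclude ``$T$ commutes with $s\mathcal{I}-\overline{T}$'': for that you would also need $T$ to commute with left multiplication by $s$, and this fails as soon as $s\notin\mathbb{R}$. Indeed, writing $T=T_0+\sum_i e_iT_i$ and $s=u+Iv\in\mathbb{C}_I$, the operator $T\circ (s\mathcal{I})$ picks up products $e_iI$, whereas $(s\mathcal{I})\circ T$ picks up $Ie_i$, and $e_iI\neq Ie_i$ unless $I$ is a real multiple of $e_i$. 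Consequently $T$ does not commute with $\mathcal{Q}_s(T)^{-1}=s^2\mathcal{I}-2sT_0+T\overline{T}$ (the obstruction is the term $s^2\mathcal{I}$), nor with $\mathcal{Q}_s(T)$, nor with $F_n^L(s,T)$. The commuting-components hypothesis only gives you that the \emph{real} operators $T_0,\ldots,T_n$ commute among themselves; it says nothing about commutation with the Clifford units hidden in the scalar $s$.

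The paper avoids this problem by observing that $s\mapsto s^{n-1}$ is an intrinsic function (it lies in $\mathcal{N}(U)$), so $\breve{P}_j$ and $\breve{T}_j$ admit the alternative representations
\[
\breve{P}_j=\frac{\gamma_n^{-1}}{2\pi}\int_{\partial(U_j\cap\mathbb{C}_I)} s^{n-1}\,ds_I\,F_n^R(s,T),
\]
and similarly for $\breve{T}_j$, using the right $F$-resolvent. One then computes $\breve{P}_jT$ by pushing $T$ onto $F_n^R(s,T)T$ and applying the right $F$-resolvent equation \eqref{FRER}, which gives $\breve{T}_j$ directly (here one only uses that $s$, $ds_I$ and $\mathcal{Q}_s(T)$ commute on $\mathbb{C}_I$, which \emph{is} true since $\mathcal{Q}_s(T)^{-1}$ involves only real operators and powers of $s$). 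So the equality $T\breve{P}_j=\breve{P}_jT$ is correct, but it comes out of the interplay between the left and right formulations of the calculus, not from a pointwise commutation with the integrand.
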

\begin{proof}
Note, that the operators $\breve{P}_j$ and $\breve{T}_j$ can also be written using the right $F$-resolvent operator. To proof (1), we apply the $F$-resolvent equation and get
\begin{align*}
T\breve{P}_j &= T\frac{\gamma_{n}^{-1}}{2\pi}\int_{\partial(U_j\cap\mathbb{C}_I)}F_n^L(s,T)ds_Is^{n-1}
\\
&= \frac{\gamma_{n}^{-1}}{2\pi}\int_{\partial(U_j\cap\mathbb{C}_I)}TF_n^L(s,T)ds_Is^{n-1}
\\
&= \frac{\gamma_{n}^{-1}}{2\pi}\int_{\partial(U_j\cap\mathbb{C}_I)} \left(F_n^L(s,T)s - \gamma_n\mathcal{Q}_s(T)^{\frac{n-1}{2}} \right)ds_I s^{n-1}
\\
&
= \frac{\gamma_{n}^{-1}}{2\pi}\int_{\partial(U_j\cap\mathbb{C}_I)} F_n^L(s,T)ds_I s^{n} - \frac{1}{2\pi}\int_{\partial(U_j\cap\mathbb{C}_I)} \mathcal{Q}_s(T)^{\frac{n-1}{2}} ds_I s^{n-1}
\\
&= \breve{T}_j.
\end{align*}
As $s$, $ds_I$ and $\mathcal{Q}_s(T)$ commute on $\mathbb{C}_I$, we also have
\begin{align*}
\breve{P}_j T &= \frac{\gamma_{n}^{-1}}{2\pi}\int_{\partial(U_j\cap\mathbb{C}_I}s^{n-1}ds_IF_n^{R}(s,T)T
\\
&=\frac{\gamma_{n}^{-1}}{2\pi}\int_{\partial(U_j\cap\mathbb{C}_I}s^{n-1}ds_I\left(sF_n^{R}(s,T) - \gamma_n\mathcal{Q}_s(T)^{\frac{n-1}{s}}\right)
\\
& = \frac{\gamma_{n}^{-1}}{2\pi}\int_{\partial(U_j\cap\mathbb{C}_I}s^{n}ds_IF_n^{R}(s,T) -\frac{1}{2\pi}\int_{\partial(U_j\cap\mathbb{C}_I)} s^{n-1}ds_I\mathcal{Q}_s(T)^{\frac{n-1}{s}}
\\
&= \breve{T}_j.
\end{align*}

To proof \eqref{FREL_subs}, for $\lambda\in\rho_F(T)$ we apply the $F$-resolvent equation \eqref{FREL} and obtain
\[
\begin{split}
\breve{P}_jF_n^L(\lambda,T)\lambda &= \breve{P}_j\left(TF_n^L(\lambda,T)-\gamma_n\mathcal{Q}_{\lambda}(T)^{\frac{n-1}{2}}\right)
\\
&
=\breve{P}_jTF_n^L(\lambda,T) - \breve{P_j}\gamma_n\mathcal{Q}_{\lambda}(T)^{\frac{n-1}{2}}
 \\
 &
 = \breve{T}_jF_n^L(\lambda,T) - \breve{P}_j\gamma_n\mathcal{Q}_{\lambda}(T)^{\frac{n-1}{2}}.
\end{split}
\]
The identity \eqref{FRER_subs} can be proved analogously.

\end{proof}

\section{Preliminary results on the $SC$-functional calculus}

As we have already mentioned in the introduction the $F$-resolvent equation for the $F$-functional calculus involves also the $SC$-resolvent operators. In this section we recall some  results on the $SC$-functional calculus,
 for more details see \cite{SlaisComm}.

\begin{definition}(The $SC$-resolvent operators)
Let
$T\in\mathcal{BC}^{\small 0,1}(V_n)$
 and $ s \in \rho_F(T)$.
We define the left $SC$-resolvent operator as
\begin{equation}\label{SCCresolvoperator}
S_{C,L}^{-1}(s,T):=(s\mathcal{I}- \overline{T})(s^2\mathcal{I}-s(T+\overline{T})+T\overline{T})^{-1},
\end{equation}
and  the right $SC$-resolvent operator as
\begin{equation}\label{SCCresolvoperatorRQ}
S_{C,R}^{-1}(s,T):=(s^2\mathcal{I}-s(T+\overline{T})+T\overline{T})^{-1}(s\mathcal{I}- \overline{T}).
\end{equation}
\end{definition}

\begin{theorem}
Let $T\in\mathcal{BC}^{\small 0,1}(V_n)$  and $s\in \rho_{F}(T)$.  Then
$S_{C,L}^{-1}(s,T)$ satisfies the left $SC$-resolvent equation
\begin{equation}\label{Sresolveqbound}
S_{C,L}^{-1}(s,T)s-TS_{C,L}^{-1}(s,T)=\mathcal{I},
\end{equation}
and $S_{C,R}^{-1}(s,T)$ satisfies the right $SC$-resolvent equation
$$
sS_{C, R}^{-1}(s,T)-S_{C, R}^{-1}(s,T)T=\mathcal{I}.
$$
\end{theorem}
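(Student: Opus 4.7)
The plan is to verify both identities by direct algebraic manipulation, exploiting the commutation properties that hold because the components of $T$ commute. The central observation, which I would establish first, is that the paravector $s\mathcal{I}$ commutes with the operator $\mathcal{Q}_s(T) = (s^2\mathcal{I}-s(T+\overline{T})+T\overline{T})^{-1}$. This follows because $T+\overline{T}=2T_0$ is a real (scalar) operator, and because the commutativity of $T_0,T_1,\dots,T_n$ yields $T\overline{T} = T_0^2+T_1^2+\cdots+T_n^2$, which is likewise real. Hence the expression being inverted is a polynomial in $s$ whose coefficients are real operators, all of which commute with $s$ from either side; therefore so does its inverse.

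For the left identity, I would write
\[
S_{C,L}^{-1}(s,T)s-TS_{C,L}^{-1}(s,T) = (s\mathcal{I}-\overline{T})\mathcal{Q}_s(T)s - T(s\mathcal{I}-\overline{T})\mathcal{Q}_s(T),
\]
use the commutation just noted to move $\mathcal{Q}_s(T)$ to the right in the first summand, and factor it out on the right:
\[
= \bigl[(s\mathcal{I}-\overline{T})s - T(s\mathcal{I}-\overline{T})\bigr]\mathcal{Q}_s(T).
\]
Expanding the bracket gives $s^2\mathcal{I}-\overline{T}s - Ts + T\overline{T}$, and since $T+\overline{T}=2T_0$ is scalar and commutes with $s$, this equals $s^2\mathcal{I}-s(T+\overline{T}) + T\overline{T} = \mathcal{Q}_s(T)^{-1}$, yielding $\mathcal{I}$ as desired.

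For the right identity I would carry out the mirror computation:
\[
sS_{C,R}^{-1}(s,T)-S_{C,R}^{-1}(s,T)T = \mathcal{Q}_s(T)\bigl[s(s\mathcal{I}-\overline{T}) - (s\mathcal{I}-\overline{T})T\bigr],
\]
where I have again used that $s$ commutes with $\mathcal{Q}_s(T)$. Expanding and using $T\overline{T}=\overline{T}T$ (commuting components) gives $s^2\mathcal{I}-s(T+\overline{T}) + T\overline{T}$ inside the bracket, and this cancels $\mathcal{Q}_s(T)$ to produce $\mathcal{I}$.

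There is no serious obstacle; the only point requiring attention is the repeated use of the commutativity facts, and keeping careful track of the order of factors in the non-commutative Clifford setting. One should emphasize in the write-up that both reductions depend essentially on the assumption $T\in \mathcal{BC}^{0,1}(V_n)$, which guarantees that $T+\overline{T}$ and $T\overline{T}$ are real operators and thus commute with the paravector variable $s$.
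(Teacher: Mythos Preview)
Your argument is correct. The paper itself does not supply a proof of this theorem: it is stated in Section~6 as a preliminary fact recalled from \cite{SlaisComm}. There is therefore no ``paper's own proof'' to compare against. That said, your direct computation is exactly the method the paper uses for the analogous $F$-resolvent equations in Theorem~\ref{FRE}: expand both terms, pull the factor $\mathcal{Q}_s(T)$ (or its power) to one side using that $s$ commutes with it, and observe that the remaining bracket is precisely $\mathcal{Q}_s(T)^{-1}$. Your justification of the commutation step---noting that $T+\overline{T}=2T_0$ and $T\overline{T}=\sum_j T_j^2$ are operators with real components and hence commute with the paravector $s$---is the right one and is implicitly used in the paper's proof of Theorem~\ref{FRE} as well.
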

The following crucial results is proved in \cite{acgs}.
\begin{theorem}[The $SC$-resolvent equation]\label{RLRESOLVEQSCCALSC}
Let $T\in\mathcal{BC}^{\small 0,1}(V_n)$ and let $s$ and $p\in \rho_F(T)$. Then we have
\begin{equation}\label{RLresolvSC}
\begin{split}
S_{C,R}^{-1}(s,T)&S_{C,L}^{-1}(p,T)=((S_{C,R}^{-1}(s,T)-S_{C,L}^{-1}(p,T))p
\\
&
-\overline{s}(S_{C,R}^{-1}(s,T)-S_{C,L}^{-1}(p,T)))(p^2-2s_0p+|s|^2)^{-1}.
\end{split}
\end{equation}
Moreover, the resolvent equation can also be written as
\begin{equation}\label{RLresolvIISC}
\begin{split}
S_{C,R}^{-1}(s,T)&S_{C,L}^{-1}(p,T)=(s^2-2p_0s+|p|^2)^{-1}(s(S_{C,R}^{-1}(s,T)-S_{C,L}^{-1}(p,T))
\\
&
-(S_{C,R}^{-1}(s,T)-S_{C,L}^{-1}(p,T))\overline{p} ).
\end{split}
\end{equation}
\end{theorem}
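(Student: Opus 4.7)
The plan is to reduce both formulations to algebraic identities by clearing the denominator, and then to verify those identities using only the two $SC$-resolvent equations \eqref{Sresolveqbound}: abbreviating $S_R := S_{C,R}^{-1}(s,T)$ and $S_L := S_{C,L}^{-1}(p,T)$, these read
$$
S_L p - TS_L = \mathcal{I}, \qquad sS_R - S_R T = \mathcal{I}.
$$

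The central observation is the commutator-type identity
$$
sS_R S_L - S_R S_L p = S_L - S_R,
$$
which falls out at once by computing $sS_R S_L = (S_R T + \mathcal{I})S_L = S_R T S_L + S_L$ and $S_R S_L p = S_R(TS_L + \mathcal{I}) = S_R T S_L + S_R$ and subtracting. From this and its iterate
$$
S_R S_L p^2 = s^2 S_R S_L - sS_L + sS_R - TS_L - \mathcal{I} + S_R p
$$
one assembles $S_R S_L(p^2 - 2s_0 p + |s|^2)$ directly. The coefficient of $S_R S_L$ on the right is $s^2 - 2s_0 s + |s|^2$, which vanishes because the paravector $s$ satisfies its own minimal polynomial; what remains simplifies via $\overline{s} = 2s_0 - s$ and one last use of $S_L p = TS_L + \mathcal{I}$ to yield $(S_R - S_L)p - \overline{s}(S_R - S_L)$, which is form I \eqref{RLresolvSC}.

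Form II is proved by the mirror-image argument: using $s^2 S_R = S_R T^2 + T + s$ and $T^2 S_L = S_L p^2 - p - T$ (the resolvent equations iterated, now with the roles of $s$ and $p$ swapped relative to the previous step), one expands $(s^2 - 2p_0 s + |p|^2) S_R S_L$; this time the vanishing coefficient is $p^2 - 2p_0 p + |p|^2 = 0$ and the surviving terms collapse to the form II right-hand side.

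The main delicate point is bookkeeping under non-commutativity: the paravectors $s$ and $p$ do not commute, $s$ does not commute with $S_L$ (a function of $p$ and $T$), and $p$ does not commute with $S_R$. What makes the computation go through is precisely that the two resolvent equations are the device for moving $s$ past $S_R$ and $p$ past $S_L$ in a controlled way (replacing the paravector by $T$ up to an additive $\mathcal{I}$), whereas the real scalars $s_0$, $p_0$, $|s|^2$, $|p|^2$ and the operators $T$, $\overline{T}$ commute freely with everything paravectorial in sight. The strategy is therefore never to reorder a paravector past an $S$-operator directly, but always to use the resolvent equation as the intermediate rewrite.
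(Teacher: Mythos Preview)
Your argument for \eqref{RLresolvSC} is correct and is exactly the natural direct verification: clear the denominator, use the commutator identity $sS_RS_L - S_RS_Lp = S_L - S_R$ (which is just the two single-variable resolvent equations combined), and collapse the remainder using $\bar{s}=2s_0-s$ and $s^2-2s_0s+|s|^2=0$. Note that the present paper does not supply its own proof of this theorem at all --- it simply quotes the result from \cite{acgs} --- so there is nothing here to compare your argument against; but your computation is precisely the standard one carried out in that reference.

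One caution on your treatment of \eqref{RLresolvIISC}. You say Form~II follows by the mirror-image computation, and that is morally right, but if you actually run it you will find a sign discrepancy: expanding $(s^2-2p_0s+|p|^2)S_RS_L$ via the same commutator identity yields
\[
(s^2-2p_0s+|p|^2)S_RS_L \;=\; -(S_R-S_L)p + (2p_0-s)(S_R-S_L),
\]
which is the \emph{negative} of $s(S_R-S_L)-(S_R-S_L)\bar{p}$. A quick commutative sanity check ($n=1$, where $S_R=(s-T)^{-1}$, $S_L=(p-T)^{-1}$) confirms that the right-hand side of \eqref{RLresolvIISC} as printed equals $-(s-T)^{-1}(p-T)^{-1}$, not $(s-T)^{-1}(p-T)^{-1}$. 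So \eqref{RLresolvIISC} as recorded here carries a sign typo; your method is sound and proves the corrected identity (swap $S_R-S_L$ for $S_L-S_R$), but it will not literally reproduce the displayed formula. Do not let that discrepancy make you doubt your computation.
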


\begin{definition}[The $SC$-functional calculus]\label{fdiTSC}
Let $T\in\mathcal{BC}^{\small 0,1}(V_n)$.
Let $U\subset \mathbb{R}^{n+1}$ be admissible for $T$
and set $ds_I=ds/I$ for $I\in\mathbb{S}$.
We define
\begin{equation}\label{FCSC}
f(T)= {{1}\over{2\pi }} \int_{\partial (U\cap \mathbb{C}_I) } S_{C,L}^{-1} (s,T)\  ds_I \
f(s),  \  \ {\rm for} \ \ f\in \mathcal{SM}_{\sigma_F(T)},
\end{equation}
and
\begin{equation}\label{FCSCRright}
f(T)= {{1}\over{2\pi }} \int_{\partial (U\cap \mathbb{C}_I) } f(s)\  ds_I \
S_{C,R}^{-1} (s,T), \  \ {\rm for} \ \ f\in \mathcal{SM}^R_{\sigma_F(T)}.
\end{equation}
\end{definition}
Finally we need a technical lemma.
\begin{definition}\label{defdiN}
Let $f:U\to \mathbb{R}_{n}$ be a slice monogenic function, where $U$ is an open set in $\mathbb{R}^{n+1}$.
 We define
$$
 \mathcal{N}(U)=\{ f\in\mathcal{SM}(U)\ :  \ f(U\cap \mathbb{C}_I)\subseteq  \mathbb{C}_I,\ \  \forall I\in \mathbb{S}\}.
$$
\end{definition}
First of all let us observe that  functions in the subclass  $\mathcal{N}(U)$ are both left and right slice hyperholomorphic. When we take the power series expansion of this class of functions at a point on the real line the coefficients of the expansion  are real numbers.

Now observe that for functions in $f\in \mathcal{N}(U)$ we can define $f(T)$
 using the left but also the right $SC$-functional calculus. It is
\[
\begin{split}
 f(T)&={{1}\over{2\pi }} \int_{\partial (U\cap \mathbb{C}_I)} S_{C,L}^{-1} (s,T)\  ds_I \ f(s)
 \\
 &
={{1}\over{2\pi }} \int_{\partial (U\cap \mathbb{C}_I)} \  f(s)\ ds_I \ S_{C,R}^{-1} (s,T).
\end{split}
\]
The following lemma is proved in \cite{acgs}.
\begin{lemma}\label{Lemma321} Let $B\in \mathcal{B}(V_n)$. Let $G$ be an axially symmetric s-domain and assume that
$f\in \mathcal{N}(G)$.
Then, for $p\in G$, we have
$$
\frac{1}{2\pi}\int_{\partial(G\cap\mathbb{C}_I)}f(s)ds_I
(\overline{s}B-Bp)(p^2-2s_0p+|s|^2)^{-1}=Bf(p).
$$
\end{lemma}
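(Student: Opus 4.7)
I would use the Hahn-Banach scalarization strategy of \cite{acgs}, where this result originates. Fix $v \in V_n$ and a continuous $\mathbb{R}$-linear functional $\phi \in V_n'$, and set
\[
g_{\phi,v}(p) := \phi\Big(\tfrac{1}{2\pi}\int_{\partial(G\cap \mathbb{C}_I)} f(s)\, ds_I\, (\overline{s}B - Bp)(p^2-2s_0p+|s|^2)^{-1} v - Bf(p)v\Big).
\]
The lemma will follow from the Hahn-Banach theorem once one shows $g_{\phi, v} \equiv 0$ on $G$ for every such $v$ and $\phi$.

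First, I would verify that $g_{\phi, v}$ is left slice monogenic in $p$ on $G$. For fixed $s\in\mathbb{C}_I$ with $s \notin [p]$, the kernel $(\overline{s}B - Bp)(p^2 - 2s_0 p + |s|^2)^{-1}$ satisfies the Cauchy-Riemann system in $p$ on each slice; the computation is a minor variant of Proposition \ref{Laplacian_smonogenic}, using that $p$ commutes with the real scalars $s_0$ and $|s|^2$ and that $B$, being $\mathbb{R}$-linear, commutes with differentiation in the real components of $p$. Integration against the bounded measure $f(s)\,ds_I$ preserves slice monogenicity, and so does subtraction of the slice monogenic term $\phi(Bf(p)v)$.

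Next, by the Representation Formula (Theorem \ref{formulamon}), it suffices to verify $g_{\phi, v}(p) = 0$ for $p \in G \cap \mathbb{C}_I$. On this plane every scalar appearing in the integrand --- namely $p$, $\overline{s}$, $s_0$, $|s|^2$, $ds_I$, and crucially $f(s)$ (the latter because $f \in \mathcal{N}(G)$) --- lies in $\mathbb{C}_I$ and hence commutes with all the others. The classical factorization $p^2 - 2 s_0 p + |s|^2 = (p-s)(p-\overline{s})$ now holds in $\mathbb{C}_I$, and the partial fraction identity $\frac{1}{(p-s)(p-\overline{s})} = \frac{1}{s-\overline{s}}\bigl(\frac{1}{p-s} - \frac{1}{p-\overline{s}}\bigr)$ decomposes the integral into simple closed-contour integrals. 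Applying the standard Cauchy formula on $\mathbb{C}_I$ to the $\mathbb{C}_I$-holomorphic restriction $f|_{G\cap\mathbb{C}_I}$ produces values of $f$ at $p$, and a direct calculation verifies that the $\overline{s}B$ and $Bp$ contributions recombine to exactly $Bf(p)v$.

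The chief obstacle is the non-commutativity of $B$ with Clifford scalars, which rules out the naive factoring $\overline{s}B - Bp = B(\overline{s}-p)$ and a direct appeal to the scalar right Cauchy formula. The hypothesis $f\in\mathcal{N}(G)$ is what circumvents this: it confines all the ``active'' scalars of the integrand to the single complex plane $\mathbb{C}_I$, where they commute and the residue calculation reduces to a classical one. The particular left-right placement of $B$ inside $\overline{s}B - Bp$ is equally essential --- it is the unique arrangement for which the non-commutative pieces cancel and $Bf(p)$ emerges on the right-hand side.
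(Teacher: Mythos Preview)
The paper does not supply a proof of this lemma; it only records the statement and cites \cite{acgs}. So there is no in-paper argument to compare your proposal against.

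Your outline does contain a gap at the first step. You assert that for fixed $s$ the kernel $p\mapsto (\overline{s}B-Bp)(p^{2}-2s_{0}p+|s|^{2})^{-1}$ is left slice monogenic, calling this ``a minor variant of Proposition~\ref{Laplacian_smonogenic}.'' But on a slice $\mathbb{C}_{J}$ with $J\neq I$, verifying $\partial_{u}+J\partial_{v}=0$ requires moving $J$ past both $\overline{s}$ and $B$; neither commutation holds in general ($\overline{s}\in\mathbb{C}_{I}$ need not commute with $J$, and $B\in\mathcal{B}(V_{n})$ is only $\mathbb{R}$-linear). That $B$ commutes with the real partial derivatives is true but not sufficient: the slice Cauchy--Riemann operator also carries a left factor $J$ that must be absorbed. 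So the Representation-Formula reduction to $p\in G\cap\mathbb{C}_{I}$ is not justified as written.

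Your computation on $\mathbb{C}_{I}$ --- using $f\in\mathcal{N}(G)$ to force all active scalars into one commutative plane and then invoking the ordinary Cauchy formula --- is correct and is indeed the heart of the matter; it already suffices for the only use of the lemma in this paper, since there $p$ ranges over $\partial(G_{1}\cap\mathbb{C}_{I})$ and hence lies in $\mathbb{C}_{I}$ anyway. To recover the lemma for arbitrary $p\in G$, one clean route (parallel to the independence arguments in Section~4) is first to show that the left-hand side does not depend on the choice of $I\in\mathbb{S}$, and then, for a given $p$, to take $I=I_{p}$ so that your planar computation applies directly.
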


\section{Projectors for the dimension $n=3$}

To proof that operators $\breve{P}_j$ defined in (\ref{pigei}) are projectors is
based on a suitable $F$-resolvent equation that establishes a link between the product
$F_n^R(s,T)F_n^L(p,T)$ and the difference $F_n^R(s,T) - F_n^L(p,T)$.
For the case $n=3$ we are able to show that such a relation exists and we can prove that the operators
$\breve{P}_j$ are projectors.

\subsection{The $F$-resolvent equations for $n=3$}
We start with a preliminary lemma.
\begin{lemma}
Let $T\in\mathcal{BC}(V_3)$. Then for $p,s\in \rho_F(T)$ the following equation holds
\begin{multline*}
 F_3^R(s,T)S_{C,L}^{-1}(p,T)+S_{C,R}^{-1}(s,T)F_3^L(p,T)+\gamma_3\mathcal{Q}_s(T)\mathcal{Q}_p(T)
 \\
 = \left[ \left( F_3^R(s,T) - F_3^L(p,T)\right)p-\bar{s}\left( F_3^R(s,T) - F_3^L(p,T)\right) \right](p^2 - 2s_0p + |s|^2)^{-1}.
\end{multline*}
\end{lemma}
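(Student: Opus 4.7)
The plan is to reduce the identity to the $SC$-resolvent equation (Theorem~\ref{RLRESOLVEQSCCALSC}) by exploiting the factorizations available for $n=3$. Since $\frac{n+1}{2}=2$ in this case, the definitions immediately give
$F_3^R(s,T)=\gamma_3\mathcal{Q}_s(T)S_{C,R}^{-1}(s,T)$ and $F_3^L(p,T)=\gamma_3 S_{C,L}^{-1}(p,T)\mathcal{Q}_p(T)$. I use these to rewrite the two non-trivial summands on the left-hand side as
$F_3^R(s,T)S_{C,L}^{-1}(p,T)=\gamma_3\mathcal{Q}_s(T)S_{C,R}^{-1}(s,T)S_{C,L}^{-1}(p,T)$
and
$S_{C,R}^{-1}(s,T)F_3^L(p,T)=\gamma_3 S_{C,R}^{-1}(s,T)S_{C,L}^{-1}(p,T)\mathcal{Q}_p(T)$,
each containing the $SC$-product to which the $SC$-resolvent equation applies.

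Substituting that equation into the first summand, and using that $\mathcal{Q}_s(T)$ commutes with $\overline s$ (their Clifford components both lie in the commutative subfield $\mathbb{C}_{I_s}$), produces
\[
F_3^R(s,T)S_{C,L}^{-1}(p,T)=\bigl[(F_3^R(s,T)-X)p-\overline s(F_3^R(s,T)-X)\bigr](p^2-2s_0p+|s|^2)^{-1},
\]
with $X:=\gamma_3\mathcal{Q}_s(T)(p\mathcal{I}-\overline T)\mathcal{Q}_p(T)$. Substituting into the second summand, now using that $\mathcal{Q}_p(T)$ commutes with $p$ and with $(p^2-2s_0p+|s|^2)^{-1}$ (all in $\mathbb{C}_{I_p}$), yields the analogous identity
\[
S_{C,R}^{-1}(s,T)F_3^L(p,T)=\bigl[(Y-F_3^L(p,T))p-\overline s(Y-F_3^L(p,T))\bigr](p^2-2s_0p+|s|^2)^{-1},
\]
with $Y:=\gamma_3\mathcal{Q}_s(T)(s\mathcal{I}-\overline T)\mathcal{Q}_p(T)$. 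Adding the two, the desired $F_3^R-F_3^L$ structure of the right-hand side of the lemma appears, together with a correction controlled by $Y-X=\gamma_3\mathcal{Q}_s(T)(s-p)\mathcal{Q}_p(T)$.

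It remains to verify that this correction exactly cancels $\gamma_3\mathcal{Q}_s(T)\mathcal{Q}_p(T)$. A direct computation using $\mathcal{Q}_p(T)p=p\mathcal{Q}_p(T)$, $\overline s\,\mathcal{Q}_s(T)=\mathcal{Q}_s(T)\overline s$ and $\overline s\,s=|s|^2$ collapses $(Y-X)p-\overline s(Y-X)$ to $-\gamma_3\mathcal{Q}_s(T)(p^2-2s_0p+|s|^2)\mathcal{Q}_p(T)$, and because the scalar $p^2-2s_0p+|s|^2$ lies in $\mathbb{C}_{I_p}$ it commutes with $\mathcal{Q}_p(T)$, so the trailing $(p^2-2s_0p+|s|^2)^{-1}$ cancels it cleanly and leaves $-\gamma_3\mathcal{Q}_s(T)\mathcal{Q}_p(T)$. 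Transposing this term to the left-hand side then yields the claim. The main difficulty is the disciplined bookkeeping of non-commutativity of Clifford components: the argument works precisely because $\mathcal{Q}_s(T)$ behaves as a scalar factor against elements of $\mathbb{C}_{I_s}$, $\mathcal{Q}_p(T)$ against elements of $\mathbb{C}_{I_p}$, and because the commuting-components hypothesis on $T$ is what lets $F_3^R$ and $F_3^L$ factor cleanly through $S_{C,R}^{-1}(s,T)$ and $S_{C,L}^{-1}(p,T)$ in the first place.
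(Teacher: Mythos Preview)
Your proof is correct and follows essentially the same approach as the paper: both multiply the $SC$-resolvent equation on the left by $\gamma_3\mathcal{Q}_s(T)$ and on the right by $\gamma_3\mathcal{Q}_p(T)$, add, and then simplify the cross term $(Y-X)p-\overline s(Y-X)$ to $-\gamma_3\mathcal{Q}_s(T)\mathcal{Q}_p(T)(p^2-2s_0p+|s|^2)$. Your version is in fact slightly more explicit about the commutativity justifications (that $T+\overline T$ and $T\overline T$ are real operators, so the only Clifford content of $\mathcal{Q}_s(T)$ lies in $\mathbb{C}_{I_s}$, and similarly for $\mathcal{Q}_p(T)$), which the paper's proof uses tacitly.
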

\begin{proof}
Let us consider the $SC$-resolvent equation and multiply it on the left by  $\gamma_3 \mathcal{Q}_s(T)$, we get
\begin{multline*}
 F_3^R(s,T)S_{C,L}^{-1}(p,T)
 \\
 = \left[ \left( F_3^R(s,T) - \gamma_3 \mathcal{Q}_s(T)S^{-1}_L(p,T)\right)p-\bar{s}\left( F_3^R(s,T) - \gamma_3 \mathcal{Q}_s(T)S^{-1}_L(p,T)\right) \right](p^2 - 2s_0p + |s|^2)^{-1}.
\end{multline*}
Now multiply the $S$-resolvent equation  on the right by  $\gamma_3 \mathcal{Q}_p(T)$, we get
\begin{multline*}
 S_{C,R}^{-1}(s,T)F_3^L(p,T)
 \\
 = \left[ \left(S_{C,R}^{-1}(s,T) \gamma_3 \mathcal{Q}_p(T)-  F_3^L(p,T)\right)p-\bar{s}\left(S_{C,R}^{-1}(s,T) \gamma_3 \mathcal{Q}_p(T)-  F_3^L(p,T) \right) \right](p^2 - 2s_0p + |s|^2)^{-1}.
\end{multline*}
Now we add the above two equations to get

\begin{multline*}
 F_3^R(s,T)S_{C,L}^{-1}(p,T)+ S_{C,R}^{-1}(s,T)F_3^L(p,T)
 \\
 =
 \left[ \left( F_3^R(s,T) - F_3^L(p,T)\right)p-\bar{s}\left( F_3^R(s,T) - F_3^L(p,T)\right) \right](p^2 - 2s_0p + |s|^2)^{-1}
\\
 +\Big[ \left( S_{C,R}^{-1}(s,T) \gamma_3 \mathcal{Q}_p(T) - \gamma_3 \mathcal{Q}_s(T)S^{-1}_{C,L}(p,T)\right)p
\\ -\bar{s}\Big( S_{C,R}^{-1}(s,T) \gamma_3 \mathcal{Q}_p(T)
  - \gamma_3 \mathcal{Q}_s(T)S^{-1}_{C,L}(p,T)\Big) \Big](p^2 - 2s_0p + |s|^2)^{-1}.
 \end{multline*}
 Finally, we verify that
 $$
 \Big[ \left( S_{C,R}^{-1}(s,T) \gamma_3 \mathcal{Q}_p(T) - \gamma_3 \mathcal{Q}_s(T)S^{-1}_{C,L}(p,T)\right)p
 -\bar{s}\Big( S_{C,R}^{-1}(s,T) \gamma_3 \mathcal{Q}_p(T)
  - \gamma_3 \mathcal{Q}_s(T)S^{-1}_{C,L}(p,T)\Big) \Big]
  $$
  $$
  \times(p^2 - 2s_0p + |s|^2)^{-1}=-\gamma_3 \mathcal{Q}_s(T)\mathcal{Q}_p(T).
 $$
 This follows from
 $$
  \left( S_{C,R}^{-1}(s,T) \gamma_3 \mathcal{Q}_p(T) - \gamma_3 \mathcal{Q}_s(T)S^{-1}_{C,L}(p,T)\right)p
 -\bar{s}\Big( S_{C,R}^{-1}(s,T) \gamma_3 \mathcal{Q}_p(T)
  - \gamma_3 \mathcal{Q}_s(T)S^{-1}_{C,L}(p,T)\Big)
  $$
  $$
  = \gamma_3\Big[ \Big( \mathcal{Q}_s(T) (s\mathcal{I}-\overline{T})
  \mathcal{Q}_p(T) -  \mathcal{Q}_s(T)(p\mathcal{I}-\overline{T})\mathcal{Q}_p(T) \Big)p
  $$
  $$
 -\bar{s}\Big( \mathcal{Q}_s(T) (s\mathcal{I}-\overline{T})
  \mathcal{Q}_p(T) -  \mathcal{Q}_s(T)(p\mathcal{I}-\overline{T})\mathcal{Q}_p(T)\Big) \Big]
  $$
  $$
  = \gamma_3\Big[ \mathcal{Q}_s(T)(  s -  p)\mathcal{Q}_p(T) p
 -\bar{s}\mathcal{Q}_s(T)(  s -  p)\mathcal{Q}_p(T)\Big]
  $$
   $$
  = \gamma_3\Big[ \mathcal{Q}_s(T)(  sp -  p^2)\mathcal{Q}_p(T)
 -\mathcal{Q}_s(T)(  \bar{s}s -  \bar{s}p)\mathcal{Q}_p(T)\Big]
  $$$$
  = \gamma_3\Big[ \mathcal{Q}_s(T)(  sp -  p^2)\mathcal{Q}_p(T)
 -\mathcal{Q}_s(T)(  \bar{s}s -  \bar{s}p)\mathcal{Q}_p(T)\Big]
  $$
  $$
  =- \gamma_3 \mathcal{Q}_s(T)\mathcal{Q}_p(T)(p^2 - 2s_0p + |s|^2).
  $$
\end{proof}
\begin{theorem}[The $F$-resolvent equation for $n=3$]
Let $T\in\mathcal{BC}(V_3)$. Then for $p,s\in \rho_F(T)$ the following equation holds
\begin{multline*}
 F_3^R(s,T)S_{C,L}^{-1}(p,T)+S_{C,R}^{-1}(s,T)F_3^L(p,T)+\gamma_3^{-1}\Big( sF_3^R(s,T)F_3^L(p,T)p
 \\
 -sF_3^R(s,T)TF_3^L(p,T) -F_3^R(s,T)TF_3^L(p,T)p+F_3^R(s,T)T^2F_3^L(p,T)\Big)
 \\
 = \left[ \left( F_3^R(s,T) - F_3^L(p,T)\right)p-\bar{s}\left( F_3^R(s,T) - F_3^L(p,T)\right) \right](p^2 - 2s_0p + |s|^2)^{-1}.
\end{multline*}
\end{theorem}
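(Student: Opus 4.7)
The plan is to reduce the statement directly to the preliminary lemma proved just above the theorem, which asserts
\[
F_3^R(s,T)S_{C,L}^{-1}(p,T)+S_{C,R}^{-1}(s,T)F_3^L(p,T)+\gamma_3\mathcal{Q}_s(T)\mathcal{Q}_p(T)
\]
equals the same right-hand side that appears in the theorem. Since the right-hand sides agree, it suffices to verify the operator identity
\[
\gamma_3^{-1}\Big( sF_3^R(s,T)F_3^L(p,T)p - sF_3^R(s,T)TF_3^L(p,T) - F_3^R(s,T)TF_3^L(p,T)p + F_3^R(s,T)T^2F_3^L(p,T)\Big) = \gamma_3\mathcal{Q}_s(T)\mathcal{Q}_p(T).
\]

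The crucial observation is that the four-term bracket factors. First I would collect the terms pairwise to obtain
\[
sF_3^R(s,T)\bigl[F_3^L(p,T)p - TF_3^L(p,T)\bigr] - F_3^R(s,T)T\bigl[F_3^L(p,T)p - TF_3^L(p,T)\bigr],
\]
which then regroups as the clean product
\[
\bigl[sF_3^R(s,T) - F_3^R(s,T)T\bigr]\bigl[F_3^L(p,T)p - TF_3^L(p,T)\bigr].
\]

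Next I would apply Theorem \ref{FRE} in the special case $n=3$, for which $(n-1)/2 = 1$. The left $F$-resolvent equation \eqref{FREL} gives $F_3^L(p,T)p - TF_3^L(p,T) = \gamma_3\mathcal{Q}_p(T)$, and the right $F$-resolvent equation \eqref{FRER} gives $sF_3^R(s,T) - F_3^R(s,T)T = \gamma_3\mathcal{Q}_s(T)$. Substituting these into the factored expression yields $\gamma_3^2\mathcal{Q}_s(T)\mathcal{Q}_p(T)$, and multiplying by $\gamma_3^{-1}$ gives exactly $\gamma_3\mathcal{Q}_s(T)\mathcal{Q}_p(T)$, as required.

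No genuine obstacle arises: the whole argument is a bookkeeping reduction that exploits the coincidence $(n-1)/2 = 1$ for $n=3$, which is precisely what allows the two $F$-resolvent equations to collapse the four-term bracket into the product $\gamma_3^2\mathcal{Q}_s(T)\mathcal{Q}_p(T)$. The only point requiring a little care is the order of the factors when regrouping, since $T$ need not commute with $F_3^L(p,T)$ or $F_3^R(s,T)$; but the particular pattern of signs and positions in the four-term expression is chosen exactly so that the factorization goes through without rearrangement, and the remaining identity then follows by substitution from the preliminary lemma.
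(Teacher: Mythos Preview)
Your proof is correct and follows essentially the same approach as the paper: both invoke the preliminary lemma and then use the left and right $F$-resolvent equations of Theorem~\ref{FRE} (with $(n-1)/2=1$) to identify $\gamma_3\mathcal{Q}_s(T)\mathcal{Q}_p(T)$ with the four-term bracket. The only cosmetic difference is direction---the paper expands the product $(sF_3^R(s,T)-F_3^R(s,T)T)(F_3^L(p,T)p-TF_3^L(p,T))$ into the four terms, while you factor the four terms back into this product.
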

\begin{proof}
We now replace in the term $\mathcal{Q}_s(T)\mathcal{Q}_p(T)$ the right and the left $F$-resolvent equations written for $p,s\in \rho_F(T)$ as:
\begin{equation}
sF_3^R(s,T)-F_3^R(s,T)T=\gamma_3\mathcal{Q}_s(T),
\end{equation}
$$
F_3^L(p,T)p-TF_3^L(p,T)=\gamma_3\mathcal{Q}_p(T).
$$
So we get
$$
\gamma_3^2\mathcal{Q}_s(T)\mathcal{Q}_p(T)=(sF_3^R(s,T)-F_3^R(s,T)T)(F_3^L(p,T)p-TF_3^L(p,T))
$$
$$
=sF_3^R(s,T)F_3^L(p,T)p-sF_3^R(s,T)TF_3^L(p,T) -F_3^R(s,T)TF_3^L(p,T)p+F_3^R(s,T)T^2F_3^L(p,T).
$$
\end{proof}
In the sequel we will need this lemma which is based on the monogenic functional calculus, see the book \cite{jefferies} for more details
 (or some of the papers \cite{jmc,jmcpw,mcp} where the calculus was introduced).

\begin{lemma}\label{lemmong}
Let $T\in\mathcal{BC}(V_3)$.  Suppose that $G$ contains just some points of the $F$-spectrum of $T$ and assume that
the closed smooth curve $\partial (G\cap \mathbb{C}_I)$ belongs to the $F$-resolvent set of $T$, for every $I\in \mathbb{S}$.
Then
\begin{equation*}
 \int_{\partial (G\cap \mathbb{C}_I)}ds_I\, s\, F_{3}^R(s,T)= 0\quad\text{and}\quad
 \int_{\partial (G\cap \mathbb{C}_I)}F_{3}^L(p,T)\, p\, dp_I = 0.
\end{equation*}
 \end{lemma}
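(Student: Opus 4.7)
The plan is to apply the right $F$-resolvent equation from Theorem \ref{FRE}, which for $n=3$ reads $sF_3^R(s,T) = F_3^R(s,T)T + \gamma_3 \mathcal{Q}_s(T)$, to split the first integral into two pieces:
$$
\int_{\partial(G\cap\mathbb{C}_I)} ds_I\, s\, F_3^R(s,T)
= \Bigl[\int_{\partial(G\cap\mathbb{C}_I)} ds_I\, F_3^R(s,T)\Bigr]\, T + \gamma_3 \int_{\partial(G\cap\mathbb{C}_I)} ds_I\, \mathcal{Q}_s(T).
$$
It thus suffices to show that each of the two remaining integrals vanishes.

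The function $s\mapsto F_3^R(s,T)$ is left slice monogenic on $\rho_F(T)$ by Proposition \ref{Laplacian_smonogenic}, and both $F_3^R(s,T) = \gamma_3\mathcal{Q}_s(T)^2(s-\overline T)$ and $\mathcal{Q}_s(T)$ are meromorphic operator-valued functions of $s$ on each slice $\mathbb{C}_I$ whose only singularities lie on $\sigma_F(T)\cap\mathbb{C}_I$. The hypothesis $\partial(G\cap\mathbb{C}_I)\subset\rho_F(T)$ for every $I\in\mathbb{S}$ forces $G$ to be axially symmetric, so the poles enclosed by $\partial(G\cap\mathbb{C}_I)$ occur either as isolated real points or in conjugate pairs $p_0\pm Iq$. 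A direct local residue computation, based on the slice-level factorization $\mathcal{Q}_s(T)^{-1}=(s-(T_0+IA))(s-(T_0-IA))$ with $A=\sqrt{T_1^2+\cdots+T_n^2}$, shows that the residues of $\mathcal{Q}_s(T)$ at a conjugate pair are of the form $\pm(2IA)^{-1}$ and therefore cancel in the axially symmetric integration; the residues of $F_3^R(s,T)$ at the same pair likewise cancel, because applying $\Delta_x$ to $S_R^{-1}(s,x)$ preserves the symmetry under $I\leftrightarrow -I$ of the Laurent expansion in $s$. At real spectrum points the expansions of both kernels have leading order $\geq 2$ with no simple-pole term, so those residues are zero individually. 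Consequently both integrals on the right-hand side vanish.

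The second identity $\int_{\partial(G\cap\mathbb{C}_I)} F_3^L(p,T)\, p\, dp_I = 0$ is proved by exactly the same strategy, using the left $F$-resolvent equation $F_3^L(p,T)\,p = TF_3^L(p,T) + \gamma_3\mathcal{Q}_p(T)$ in place of its right counterpart and the fact, from Proposition \ref{Laplacian_smonogenic}, that $p\mapsto F_3^L(p,T)$ is right slice monogenic on $\rho_F(T)$ (so that the analogous Cauchy-type identity $\int g(p)\,dp_I = 0$ holds for right slice monogenic $g$ that is regular on $\overline G$, with the residue analysis handling the singular case).

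The main obstacle is making the residue cancellation rigorous for an arbitrary commuting paravector operator $T$, rather than only for the simple eigenvalue-like situation where the computation is elementary. This is precisely where the monogenic functional calculus of Jefferies–McIntosh–Pryde enters: it furnishes the spectral framework in which the operator factorization of $\mathcal{Q}_s(T)^{-1}$ and the slice-by-slice residue analysis can be carried out uniformly in $T$, legitimizing the conjugate-pair cancellation argument above.
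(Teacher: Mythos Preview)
Your decomposition via the $F$-resolvent equation is a detour that does not actually simplify the problem, and the residue argument you propose to finish it has a genuine gap. The factorization $\mathcal{Q}_s(T)^{-1}=(s-(T_0+IA))(s-(T_0-IA))$ with $A=\sqrt{T_1^2+\cdots+T_n^2}$ is not available in general: for commuting bounded operators on a real Banach space there is no reason for $T_1^2+\cdots+T_n^2$ to admit a square root, and even when it does the ``residues at a conjugate pair'' picture presupposes that $\sigma_F(T)\cap\mathbb{C}_I$ consists of isolated points, which is false for a generic $T$. Your final paragraph concedes this and invokes the monogenic functional calculus as the remedy, but you mischaracterize what that calculus supplies: it does not produce the operator $A$ or legitimize a pole-by-pole analysis; it gives an integral representation of monogenic functions of $T$.

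The paper's argument uses the monogenic functional calculus in exactly that way, and bypasses your decomposition entirely. The key observation is that $\Delta x\equiv 0$, so by the Fueter mapping theorem in integral form one has, for every paravector $\omega$ with $[\omega]\cap\partial(G\cap\mathbb{C}_I)=\emptyset$,
\[
\int_{\partial(G\cap\mathbb{C}_I)}F_3^L(p,\omega)\,p\,dp_I=0.
\]
Since $F_3^L(p,\cdot)$ is left monogenic in its second variable (Proposition~\ref{Laplacian}), the monogenic functional calculus gives
\[
F_3^L(p,T)=\int_{\partial\Omega}\mathcal{G}_\omega(T)\,\mathbf{n}(\omega)\,F_3^L(p,\omega)\,d\mu(\omega),
\]
and a single application of Fubini then yields
\[
\int_{\partial(G\cap\mathbb{C}_I)}F_3^L(p,T)\,p\,dp_I
=\int_{\partial\Omega}\mathcal{G}_\omega(T)\,\mathbf{n}(\omega)\Bigl(\int_{\partial(G\cap\mathbb{C}_I)}F_3^L(p,\omega)\,p\,dp_I\Bigr)d\mu(\omega)=0.
\]
No splitting, no square roots, no residues. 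Your route could in principle be salvaged by applying this same scalar-then-Fubini scheme separately to $\int ds_I\,F_3^R(s,T)$ (using $\Delta 1\equiv 0$) and to $\int ds_I\,\mathcal{Q}_s(T)$ (here one would need an analogous integral representation for $\mathcal{Q}_s(T)$, which is not monogenic in $x$ and so does not fall directly under the monogenic calculus), but that is strictly more work than treating the original integrand directly.
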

\begin{proof}
As $\Delta x \equiv 0$, we have
\begin{equation*}
\int_{\partial (G\cap \mathbb{C}_I)}ds_I\, s\, F_{3}^R(s,x) =0 \quad\text{and}\quad \int_{\partial (G\cap \mathbb{C}_I)}F_{3}^L(p,x)\, p\, dp_I = 0
\end{equation*}
for all $x$ such that $x\notin[s]$ if $s\in \partial (G\cap \mathbb{C}_I)$ resp. for all $x$ such that $x\notin[p]$ if $p\in\partial (G\cap \mathbb{C}_I)$.
We consider the case of $F_{3}^L(p,x)$, the other case can be treated in a similar way.
We now recall that $F_{3}^L(p,x)$ is left monogenic in $x$ for every $p$, such that $x\notin[p]$.
Therefore,  using the monogenic functional calculus, see \cite{jefferies}, we write
$$
F_{3}^L(p,T)=\int_{\partial\Omega} \mathcal{G}_\omega(T) \mathbf{n}(\omega) F_{3}^L(p,\omega)d\mu(\omega)
$$
where the open set $\Omega$ contains the monogenic spectrum of $T$, $\mathcal{G}_\omega(T)$ is the monogenic resolvent operator, $\mathbf{n}(\omega)$ is the unit normal vector  to $\partial\Omega$ and  $d\mu(\omega)$ is the surface element.
Using  Fubini's theorem we have:
\[
\begin{split}
\int_{\partial (G\cap \mathbb{C}_I)}F_{3}^L(p,T)pdp_I
&=\int_{\partial (G\cap \mathbb{C}_I)}\int_{\partial\Omega} \Big(\mathcal{G}_\omega(T)\mathbf{n}(\omega)F_{3}^L(p,\omega)d\mu(\omega)\Big) pdp_I
\\
&
=\int_{\partial\Omega} \mathcal{G}_\omega(T)\mathbf{n}(\omega)
\Big(\int_{\partial (G\cap \mathbb{C}_I)}F_{3}^L(p,\omega) pdp_I\Big)d\mu(\omega)
\\
&
=0,
\end{split}
\]
which concludes the proof.
\end{proof}

\begin{theorem}Let $T\in\mathcal{BC}^{0,1}(V_{3})$ and let $\sigma_F(T) = \sigma_{F,1}(T)\cup\sigma_{F,2}(T)$ with
$$
\mathrm{dist}(\sigma_{F,1}(T),\sigma_{F,2}(T)) > 0.
$$
 Let $G_1,G_2\subset\mathbb{H}$ be two admissible sets for $T$ such that $\sigma_{F,1}(T)\subset G_1$ and $\overline{G_1}\subset G_2$ and such that $\mathrm{dist}(G_2,\sigma_{F,2}(T))>0$. Then the operator
\begin{equation*}
\breve{P}:=\frac{\gamma_3^{-1}}{2\pi }\int_{\partial (G_1\cap \mathbb{C}_I)}F_3^L(p,T) \,dp_Ip^{2} = \frac{\gamma_3^{-1}}{2\pi }\int_{\partial (G_2\cap \mathbb{C}_I)}s^{2}ds_IF_3^R(s,T)
\end{equation*}
is a projector, i.e. we have
$$\breve{P}^2=\breve{P}.$$
\end{theorem}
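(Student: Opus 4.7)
The plan is to realize $\breve{P}^2$ as an iterated operator-valued integral over $\partial G_2$ and $\partial G_1$, substitute the $F$-resolvent equation for $n=3$ just proved, and then show that of the many summands produced only one survives; that survivor will be identified with $\breve{P}$ by means of Lemma~\ref{Lemma321}.

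Using the two representations $\breve{P}=\frac{\gamma_3^{-1}}{2\pi}\int_{\partial(G_2\cap\mathbb{C}_I)} s^{2}ds_I\, F_3^R(s,T)=\frac{\gamma_3^{-1}}{2\pi}\int_{\partial(G_1\cap\mathbb{C}_I)} F_3^L(p,T)\,dp_I\,p^{2}$, I first write
$$
\breve{P}^2=\frac{\gamma_3^{-2}}{4\pi^2}\int_{\partial(G_2\cap\mathbb{C}_I)}\int_{\partial(G_1\cap\mathbb{C}_I)} s\cdot ds_I\cdot\bigl[sF_3^R(s,T)F_3^L(p,T)p\bigr]\cdot dp_I\cdot p,
$$
using that $s$ commutes with $ds_I$ and $p$ with $dp_I$ on $\mathbb{C}_I$. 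Read as an expression for the bracket, the $F$-resolvent equation for $n=3$ decomposes $sF_3^R(s,T)F_3^L(p,T)p$ into (i) the right-hand side $\gamma_3\bigl[(F_3^R(s,T)-F_3^L(p,T))p-\bar{s}(F_3^R(s,T)-F_3^L(p,T))\bigr](p^2-2s_0p+|s|^2)^{-1}$, (ii) two $SC$-terms $-\gamma_3 F_3^R(s,T)S_{C,L}^{-1}(p,T)$ and $-\gamma_3 S_{C,R}^{-1}(s,T)F_3^L(p,T)$, and (iii) three ``$T$-terms'' $sF_3^R(s,T)TF_3^L(p,T)$, $F_3^R(s,T)TF_3^L(p,T)p$, and $-F_3^R(s,T)T^2F_3^L(p,T)$. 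Expanding the right-hand side of type~(i) further produces four subterms, so altogether nine summands appear in $\breve{P}^2$.

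For each of the five summands of type (ii) or (iii), the double integral factors as a product of an $s$-integral over $\partial G_2$ and a $p$-integral over $\partial G_1$, and in every case at least one of those factors coincides with $\int_{\partial(G_2\cap\mathbb{C}_I)} s\,ds_I\,F_3^R(s,T)$ or $\int_{\partial(G_1\cap\mathbb{C}_I)} F_3^L(p,T)\,p\,dp_I$, both of which vanish by Lemma~\ref{lemmong}. So only the type-(i) contribution survives. Of its four subterms, the two containing $F_3^R(s,T)$ reduce, after pulling $F_3^R(s,T)$ out of the $p$-integral by right $\mathbb{R}_n$-linearity, to inner $\mathbb{C}_I$-integrals $\int_{\partial(G_1\cap\mathbb{C}_I)} p^k/\bigl[(p-s)(p-\bar{s})\bigr]\,dp_I$ for $k\in\{1,2\}$; since $s\in\partial G_2$ and $\overline{G_1}\subset G_2$, both poles lie outside $G_1$, so these vanish by Cauchy's theorem on $\mathbb{C}_I$. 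The remaining two subterms combine into
$$
\frac{\gamma_3^{-1}}{4\pi^2}\int_{\partial G_1}\left[\int_{\partial G_2} s\,ds_I\,\bigl(\bar{s}F_3^L(p,T)-F_3^L(p,T)p\bigr)(p^2-2s_0p+|s|^2)^{-1}\right]dp_I\,p,
$$
and Lemma~\ref{Lemma321} applied with $f(s)=s\in\mathcal{N}(G_2)$ and $B=F_3^L(p,T)$ evaluates the inner integral to $2\pi F_3^L(p,T)\,p$. Substituting and using $p\cdot dp_I\cdot p=dp_I\,p^2$ on $\mathbb{C}_I$ then gives $\breve{P}^2=\frac{\gamma_3^{-1}}{2\pi}\int_{\partial G_1} F_3^L(p,T)\,dp_I\,p^2=\breve{P}$.

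The main obstacle is bookkeeping: the $F$-resolvent equation for $n=3$ is comparatively complicated, and three distinct vanishing mechanisms are needed to handle the nine summands—the two identities of Lemma~\ref{lemmong} (killing the $SC$- and $T$-contributions), the Cauchy theorem on $\mathbb{C}_I$ (killing the $F_3^R$-subterms of the right-hand side, crucially using the strict containment $\overline{G_1}\subset G_2$ to push both poles outside $G_1$), and finally Lemma~\ref{Lemma321} (which identifies the $F_3^L$-subterms with $\breve{P}$). Additional care is required to move Clifford scalars past the operators $F_3^L(p,T)$ and $F_3^R(s,T)$, which is permitted only on the right.
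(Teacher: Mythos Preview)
Your proof is correct and follows essentially the same route as the paper's own argument: expand $\breve{P}^2$ as a double integral, invoke the $n=3$ $F$-resolvent equation, kill the $SC$- and $T$-type contributions via Lemma~\ref{lemmong}, kill the $F_3^R$-subterms of the right-hand side by Cauchy's theorem on $\mathbb{C}_I$ (using $\overline{G_1}\subset G_2$), and close with Lemma~\ref{Lemma321} applied with $f(s)=s$. The only cosmetic difference is that the paper multiplies the full resolvent identity by $s$ on the left and $p$ on the right and then integrates both sides, whereas you first isolate $sF_3^R(s,T)F_3^L(p,T)p$ and substitute; the resulting nine summands and the three vanishing mechanisms are identical.
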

\begin{proof}

If we multiply the resolvent equation (\ref{RLresolvSC}) by $s$ on the left and by $p$ on the right, we get
\begin{multline*}
 sF_{3}^R(s,T)S_{C,L}^{-1}(p,T)p+sS_{C,R}^{-1}(s,T)F_{3}^L(p,T)p
 \\
 +\gamma_3^{-1}\Big( s^2F_{3}^R(s,T)F_{3}^L(p,T)p^2-s^2F_{3}^R(s,T)TF_{3}^L(p,T)p
 \\
  -sF_{3}^R(s,T)TF_{3}^L(p,T)p^2+sF_{3}^R(s,T)T^2F_{3}^L(p,T)p\Big)
 \\
 = s\left[ \left( F_{3}^R(s,T) - F_{3}^L(p,T)\right)p-\bar{s}\left( F_{3}^R(s,T) - F_{3}^L(p,T)\right) \right](p^2 - 2s_0p + |s|^2)^{-1}p.
\end{multline*}
If we multiply this equation by $ds_I$ on the left integrate it over $\partial(G_2\cap\mathbb{C}_I)$ with respect to $ds_I$ and  then multiply it by $dp_I$ on the right and integrate over $\partial(G_1\cap\mathbb{C}_I)$ with respect to $dp_I$, we obtain
\[
\begin{split}
\int_{\partial (G_2\cap \mathbb{C}_I)}ds_I s& F_{3}^R(s,T) \int_{\partial (G_1\cap \mathbb{C}_I)} S_{C,L}^{-1}(p,T)p\, dp_I
\\
&
+\int_{\partial (G_2\cap \mathbb{C}_I)}ds_Is S_{C,R}^{-1}(s,T) \int_{\partial (G_1\cap \mathbb{C}_I)} F_{3}^L(p,T)p\, dp_I
 \\
&
 +\gamma_3^{-1}\Big( \int_{\partial (G_2\cap \mathbb{C}_I)}ds_Is^2F_{3}^R(s,T) \int_{\partial (G_1\cap \mathbb{C}_I)}  F_{3}^L(p,T)p^2dp_I
 \\
&
 -\int_{\partial (G_2\cap \mathbb{C}_I)}ds_Is^2F_{3}^R(s,T)T \int_{\partial (G_1\cap \mathbb{C}_I)} F_{3}^L(p,T)p\, dp_I
\end{split}
\]

\[
\begin{split}
\ \ \ \ \ \ \ \ \ \ \ \ \ \ \  \ \ \  \ \  \ & -\int_{\partial (G_2\cap \mathbb{C}_I)}ds_IsF_{3}^R(s,T)T\int_{\partial (G_1\cap \mathbb{C}_I)} F_{3}^L(p,T)p^2\, dp_I
  \\
&
  +\int_{\partial (G_2\cap \mathbb{C}_I)}ds_IsF_{3}^R(s,T)T^2\int_{\partial (G_1\cap \mathbb{C}_I)} F_{3}^L(p,T)p \, dp_I\Big)
 \\
&
 = \int_{\partial (G_2\cap \mathbb{C}_I)}ds_I\int_{\partial (G_1\cap \mathbb{C}_I)}
 s\Big[ \left( F_{3}^R(s,T) - F_{3}^L(p,T)\right)p
 \\
&
 -\bar{s}\left( F_{3}^R(s,T) - F_{3}^L(p,T)\right) \Big](p^2 - 2s_0p + |s|^2)^{-1}p\,dp_I.
\end{split}
\]
Using Lemma \ref{lemmong} we have
\begin{multline*}
 \gamma_3^{-1}\int_{\partial (G_2\cap \mathbb{C}_I)}ds_Is^2F_{3}^R(s,T) \int_{\partial (G_1\cap \mathbb{C}_I)}  F_{3}^L(p,T)p^2dp_I
 \\
 = \int_{\partial (G_2\cap \mathbb{C}_I)}ds_I\int_{\partial (G_1\cap \mathbb{C}_I)}
 s\Big[ \left( F_{3}^R(s,T) - F_{3}^L(p,T)\right)p
 \\
 -\bar{s}\left( F_{3}^R(s,T) - F_{3}^L(p,T)\right) \Big](p^2 - 2s_0p + |s|^2)^{-1} \,dp_Ip.
\end{multline*}
This is equal to
\begin{multline}
\label{proof_proj_eq1}
\frac{(2\pi)^2}{\gamma_3^{-1}}\breve{P}^2 = \int_{\partial (G_2\cap \mathbb{C}_I)}ds_I\int_{\partial (G_1\cap \mathbb{C}_I)}
 s\Big[ \left( F_{3}^R(s,T) - F_{3}^L(p,T)\right)p
 \\
 -\bar{s}\left( F_{3}^R(s,T) - F_{3}^L(p,T)\right) \Big](p^2 - 2s_0p + |s|^2)^{-1} \,dp_Ip.
\end{multline}
Let us observe the integral on the right hand side. As $\overline{G_1}\subset G_2$, for any
$s\in\partial(G_2\cap\mathbb{C}_I)$ the functions
$$
p\mapsto p(p^2 -2s_0 p + \vert s\vert^2)^{-1}p
$$
 and
 $$
 p\mapsto (p^2 - 2s_0p+\vert s\vert^2)^{-1}p
 $$ are  slice monogenic on $\overline{G_1}$. Therefore, we have
\begin{equation*}
\int_{\partial(G_1\cap\mathbb{C}_I)}p(p^2 -2s_0 p + \vert s\vert^2)^{-1}p = 0\quad\text{and}\quad\int_{\partial(G_1\cap\mathbb{C}_I)}(p^2 -2s_0 p + \vert s\vert^2)^{-1}pdp_I = 0
\end{equation*}
and it follows that
$$
 \int_{\partial (G_2\cap \mathbb{C}_I)}ds_I\int_{\partial (G_1\cap \mathbb{C}_I)}
 sF_{3}^R(s,T) p(p^2 - 2s_0p + |s|^2)^{-1} \,dp_Ip=0
 $$
 and
 $$
 \int_{\partial (G_2\cap \mathbb{C}_I)}ds_I\int_{\partial (G_1\cap \mathbb{C}_I)}
 s\bar{s}F_{3}^R(s,T) (p^2 - 2s_0p + |s|^2)^{-1} \,dp_Ip=0.
$$
Thus, \eqref{proof_proj_eq1} simplifies to
\begin{multline*}
\breve{P}^2
 = \frac{\gamma_3^{-1}}{(2\pi)^2}\int_{\partial (G_2\cap \mathbb{C}_I)}s\,ds_I\int_{\partial (G_1\cap \mathbb{C}_I)}
 \Big[ \left(\bar{s} F_{3}^L(p,T) - F_{3}^L(p,T)p\right) \Big](p^2 - 2s_0p + |s|^2)^{-1} \,dp_Ip
\end{multline*}
and by applying Lemma (\ref{Lemma321}) we finally obtain
$$
\breve{P}^2 = \frac{\gamma_3^{-1}}{2\pi}\int_{\partial(G_1\cap\mathbb{C}_I)}F_3^L(p,T)p\,dp_I p  = \breve{P}.
$$

\end{proof}

\section{Formulations of the quaternionic $F$-functional calculus }

We point out that, even thought the $F$-resolvent equation is known only for $n=3$,
 this case is of particular importance because it also allows to study
  the quaternionic version of the $F$-functional calculus.
In this section we will state the main results related with the quaternionic $F$-functional calculus
without details  since they very similar to the Clifford setting for $n=3$.

We denote by $\hh$ the algebra of quaternions.
The imaginary units  in $\hh$ are denoted by $i$, $j$ and $k$, respectively,
 and an element in $\hh$ is of the form $q=x_0+ix_1+jx_2+kx_3$, for $x_\ell\in\rr$, $\ell=0,1,2,3.$
The real part, the imaginary part and the modulus of a quaternion are defined as
${\rm Re}(q)=x_0$, $\underline{q}={\rm Im}(q)=i x_1 +j x_2 +k x_3$, $|q|^2=x_0^2+x_1^2+x_2^2+x_3^2.$
The conjugate of the quaternion $q$ is defined by $\bar q={\rm Re }(q)-{\rm Im }(q)=x_0-i x_1-j x_2-k x_3$
 and it satisfies $|q|^2=q\bar q=\bar q q.$
Let us denote by $\mathbb{S}$ the unit sphere of purely imaginary \index{$\mathbb{S}$}
quaternions, i.e.
$
\mathbb{S}=\{q=ix_1+jx_2+kx_3\ {\rm such \ that}\
x_1^2+x_2^2+x_3^2=1\}.
$
The Fueter mapping theorem consists in applying the Laplace operator in dimension $4$ to  functions of the form
$$
f(q)=\alpha(x_0,|\underline{q}|)+I\beta(x_0,|\underline{q}|)
$$
 where $\alpha$ and $\beta$ are suitable functions satisfying the Cauchy-Riemann system and $q=x_0+\underline{q}$ is a quaternion.  Functions of this form are slice regular, see the book \cite{bookfunctional} for more details.
\begin{definition}[Slice regular functions]\label{regularity} Let $U$ be an open set in
$\mathbb{H}$ and consider  a real differentiable function $f:U \to \mathbb{H}$.
Denote by  $f_I$ the restriction of $f$  to the complex plane $\mathbb{C}_I=\mathbb{R}+I\mathbb{R}$.
\begin{itemize}
\item
We say that  $f$  is (left) slice regular  if, for every $I \in
\mathbb{S}$, on $U \cap \mathbb{C}_I$ we have:
$$\frac{1}{2}\left(\frac{\partial}{\partial x}
+I\frac{\partial}{\partial y}\right)f_I(x+Iy)=0.$$
\\
The set of left slice regular functions on the open set $U$ is denoted by $\mathcal{SR}^L(U)$.
\item
We say that  $f$ is
right slice regular  if, for every $I \in
\mathbb{S}$, on $U \cap \mathbb{C}_I$ we have:
$$\frac{1}{2}\left(\frac{\partial}{\partial x}f_I(x+Iy)
+\frac{\partial}{\partial y}f_I(x+Iy)I\right)=0.$$
\\
The set of right slice regular functions on the open set $U$ is denoted by $\mathcal{SR}^R(U)$.
\end{itemize}
\end{definition}

\begin{definition}[Fueter regular functions] Let $U$ be an open set in $\mathbb{H}$.
A real differentiable function $f: U\to \mathbb{H}$ is left Cauchy-Fueter (for brevity just Fueter) regular if
$$
\frac{\pp}{\pp x_0}f(q)+ i\frac{\pp}{\pp x_1}f(q)+ j\frac{\pp}{\pp x_2}f(q)+ k\frac{\pp}{\pp x_3}f(q)=0,\ \ \ q\in U.
$$
It is right Fueter regular if
$$
\frac{\pp}{\pp x_0}f(q)+ \frac{\pp}{\pp x_1}f(q)i+ \frac{\pp}{\pp x_2}f(q)j+ \frac{\pp}{\pp x_3}f(q)k=0,\ \ \ q\in U.
$$
\end{definition}

\begin{definition}[The $F$-kernel]
Let $q$, $s\in \mathbb{H}$.
We define, for $s\not\in[q]$, the $F^L$-kernel as
$$
F^L(s,q):=-4(s-\bar q)(s^2-2{\rm Re}(q)s +|q|^2)^{-2},
$$
and the $F^R$-kernel as
$$
F^R(s,q):=-4(s^2-2{\rm Re}(q)s +|q|^2)^{-2}(s-\bar q).
$$
\end{definition}
With the above notation the Fueter mapping theorem in integral form becomes:
\begin{theorem}[The Fueter mapping theorem in integral form]
Set $ds_I=ds/ I$ and let $W\subset \mathbb{H}$ be an open set.
 Let $U$ be a bounded  axially symmetric s-domain such that $\overline{U}\subset W$.
 Suppose that the boundary of $U\cap \mathbb{C}_I$  consists of a
finite number of rectifiable Jordan curves for any $I\in\mathbb{S}$.
\begin{itemize}
\item[(a)]
If $q\in U$ and $f\in \mathcal{SR}^L(W)$ then
$\breve{f}(q)=\Delta f(q)$
is left Fueter regular and it admits the integral representation
\begin{equation}\label{Fueter}
 \breve{f}(q)=\frac{1}{2 \pi}\int_{\pp (U\cap \mathbb{C}_I)} F^L(s,q)ds_I f(s),
\end{equation}
\item[(b)]
If $q\in U$ and $f\in \mathcal{SR}^L(W)$ then
$\breve{f}(q)=\Delta f(q)$
is right Fueter regular and it admits the integral representation
\begin{equation}\label{Fueter}
 \breve{f}(q)=\frac{1}{2 \pi}\int_{\pp (U\cap \mathbb{C}_I)} f(s)ds_I F^R(s,q).
\end{equation}
\end{itemize}
The integrals  depend neither on $U$ and nor on the imaginary unit $I\in\mathbb{S}$.
\end{theorem}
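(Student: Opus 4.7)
The strategy is to mirror, in the quaternionic setting, the Clifford $n=3$ arguments already developed in the excerpt. I will only discuss case (a); case (b) is identical up to the order of factors.

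The starting point is the slice regular Cauchy formula on $\mathbb{H}$, which is the exact quaternionic analogue of Theorem \ref{Cauchygenerale}:
\[
f(q)=\frac{1}{2\pi}\int_{\partial(U\cap\mathbb{C}_I)}S_L^{-1}(s,q)\,ds_I\,f(s),
\]
with $S_L^{-1}(s,q)$ written in form II, i.e.\ $(s-\bar q)(s^2-2\,{\rm Re}(q)s+|q|^2)^{-1}$. Since $\overline{U}\subset W$ and $q\in U$, while $s$ ranges over the compact contour $\partial(U\cap\mathbb{C}_I)$, we have $q\notin[s]$ on the contour, so the integrand is real analytic in the real parameters $x_0,x_1,x_2,x_3$ of $q$ uniformly in $s$. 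Therefore we may differentiate under the integral and apply the Laplacian $\Delta=\sum_{i=0}^{3}\partial^2/\partial x_i^2$ to both sides.

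The key computational step is to show that $\Delta S_L^{-1}(s,q)=F^L(s,q)$. This is exactly the $n=3$, $h=1$ instance of Theorem \ref{Laplacian_comp}, whose derivation uses only the commutativity of $(s^2-2\,{\rm Re}(q)s+|q|^2)^{-1}$ with real polynomial expressions in $s$ and the components of $q$; both properties hold verbatim in $\mathbb{H}$. The resulting constant is $(-1)^1\cdot 2\cdot(3-1)=-4$, which matches the definition of $F^L(s,q)$. Consequently,
\[
\Delta f(q)=\frac{1}{2\pi}\int_{\partial(U\cap\mathbb{C}_I)}F^L(s,q)\,ds_I\,f(s),
\]
which is the desired integral representation. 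The fact that $\breve f(q)=\Delta f(q)$ is left Fueter regular is then the quaternionic Fueter theorem applied to $f$; alternatively, one can verify it directly from the integral representation by checking that $F^L(s,q)$ is left Fueter regular in $q$, which is the $n=3$ case of Proposition \ref{Laplacian}, again proved by the same algebraic manipulation.

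For the independence of the integral from $I\in\mathbb{S}$ and from the admissible domain $U$, I would argue exactly as in the proof of the well-posedness theorem for the $F$-functional calculus earlier in the excerpt. The essential ingredients are: (i) $F^L(s,q)$ is right slice regular in $s$ on $\mathbb{H}\setminus[q]$, which is the quaternionic $n=3$ version of Proposition \ref{Laplacian_smonogenic}; (ii) it vanishes at infinity; (iii) the scalar-valued functions obtained by pairing with a continuous linear functional inherit these properties and satisfy the slice regular Cauchy formula. Deforming contours and interchanging integrals via Fubini then yields the equality of the integrals over $(U,I)$ and $(U',J)$ for any other admissible pair, and a Hahn--Banach argument transports the scalar identity back to operator- (here, quaternion-) valued identities. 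The main technical obstacle is simply the careful transcription of the Clifford $n=3$ computations to $\mathbb{H}$, but since every step uses only the paravector quadratic form $s^2-2\,{\rm Re}(q)s+|q|^2$ and standard scalar-commutativity rules, no new difficulty arises.
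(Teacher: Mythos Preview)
Your proposal is correct and follows essentially the same route as the paper: start from the slice regular Cauchy formula, apply $\Delta$ under the integral using the $n=3$ case of Theorem~\ref{Laplacian_comp} to obtain the $F^L$-kernel, and invoke Fueter's theorem (or Proposition~\ref{Laplacian}) for Fueter regularity. One small remark: the independence of the integral from $U$ and $I$ comes for free, since the Cauchy integral equals $f(q)$ for \emph{every} admissible $(U,I)$ and $\Delta$ acts only in $q$; the Hahn--Banach/well-posedness machinery you outline is valid but unnecessary here.
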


We now consider the formulations of the $F$-functional calculus in the quaternionic setting for right linear quaternionic operators. The same formulation holds also for left linear operators with a suitable interpretation of the symbols.
\begin{definition} Let $V$ be a right vector space on $\mathbb{H}$.
A map $T: V\to V$ is said to be a right linear operator if
$$
 T(u+v)=T(u)+T(v),
\ \ \ \ \ \
T(us)=T(u)s,
$$
for all  $s\in\mathbb{H}$ and  all $u,v\in V$.
\end{definition}
In the sequel, we will consider only two sided vector spaces $V$, otherwise
the set of right linear operators
is not a (left or right) vector space. With this assumption,
the set End$(V)$ of right linear operators on
$V$ is both a left and a right vector space on $\hh$ with respect to the operations
$$(aT)(v):=aT(v), \ \ \ \
(Ta)(v):=T(av).
$$

\begin{definition} Let $V$ be a bilateral quaternionic Banach space.
We will denote by $\mathcal{B}(V)$ the bilateral Banach space of all right linear
bounded operators $T:V\to V$.
\\
 We will denote by
$\mathcal{BC}(V)$ the subclass of $\mathcal{B}(V)$ that consists of those quaternionic operators
 $T$ that can be written as
$
T=T_0+iT_1+jT_2+kT_3
$
where  the operators $T_\ell$, $\ell=0,1,2,3$ commute among themselves.
\end{definition}
It is easy to verify that $\mathcal{B}(V)$ is a Banach space endowed with its natural norm.

\begin{definition}[The $F$-spectrum and the $F$-resolvent sets]\label{seesspect_quat}
Let $T\in\mathcal{BC}(V)$.
We define the $F$-spectrum $\sigma_F(T)$ of $T$ as
$$
\sigma_{F}(T)=\{ s\in \hh\ \ :\ \ \ \ \ s^2\mathcal{I}-s(T+\overline{T})+T\overline{T}\ \ \
{\rm is\ not\  invertible}\}.
$$
The $S$-resolvent set $\rho_S(T)$ is defined as
$$
\rho_{F}(T)=\mathbb{H}\setminus\sigma_{F}(T).
$$
\end{definition}
\begin{theorem}[Structure of the $F$-spectrum]\label{strutturaK_quat}
 Let $T\in\mathcal{BC}(V)$ and let $p = p_0 +p_1I\in
  p_0 +p_1\mathbb{S}\subset \mathbb{H}\setminus\mathbb{R}$, such that $p\in \sigma_{ F}(T)$.
  Then all the elements of the sphere $p_0 +p_1\mathbb{S}$
 belong to $\sigma_{ F}(T)$.
\end{theorem}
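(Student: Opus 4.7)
The strategy is to mimic the standard proof of axial symmetry for hypercomplex spectra: exhibit $\mathcal{Q}_s(T):=s^2\mathcal{I}-s(T+\overline{T})+T\overline{T}$ as a ``scalar $+$ imaginary-direction'' decomposition depending only on $s_0$ and $|\underline{s}|$, and then intertwine $\mathcal{Q}_p(T)$ and $\mathcal{Q}_{p'}(T)$ for two points $p,p'$ on the same sphere by a left-multiplication operator.

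First I would exploit the commutativity of the components of $T$. Since $T_0,T_1,T_2,T_3$ commute pairwise and the units satisfy $ij=-ji$, $ik=-ki$, $jk=-kj$, all the off-diagonal contributions cancel when one expands $T\overline{T}$, yielding
\[
T+\overline{T}=2T_0,\qquad T\overline{T}=T_0^{2}+T_1^{2}+T_2^{2}+T_3^{2}.
\]
Inserting $s=s_0+\underline{s}$ with $s^{2}=s_0^{2}-|\underline{s}|^{2}+2s_0\underline{s}$, and using that $T_0$ commutes with $\underline{s}$, a direct calculation gives
\[
\mathcal{Q}_s(T)=A+\underline{s}\,B,
\]
where $A:=(s_0\mathcal{I}-T_0)^{2}+T_1^{2}+T_2^{2}+T_3^{2}-|\underline{s}|^{2}\mathcal{I}$ and $B:=2(s_0\mathcal{I}-T_0)$. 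Both $A$ and $B$ are polynomial expressions in $T_0,\dots,T_3$ with real scalar coefficients that depend only on $s_0$ and $|\underline{s}|$; in particular they are identical at every point of the sphere $p_0+p_1\mathbb{S}$.

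Given $p=p_0+p_1I$ and $p'=p_0+p_1J$ on that sphere, I would pick a unit quaternion $h$ with $hIh^{-1}=J$, which exists because any two unit imaginary quaternions are conjugate in $\mathbb{H}$; this gives $h\underline{p}=\underline{p'}\,h$. Let $L_h$ denote left multiplication by $h$ on the bilateral module $V$: it is a bounded $\mathbb{R}$-linear bijection with inverse $L_{h^{-1}}$, and from $h\underline{p}=\underline{p'}h$ one obtains $L_h\underline{p}=\underline{p'}L_h$. Because the components $T_\ell$ are ``scalar-type'' operators that commute with left (and right) quaternionic multiplication on $V$, so do $A$ and $B$. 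Combining these facts produces the intertwining identity
\[
L_h\,\mathcal{Q}_p(T)=\bigl(A+\underline{p'}\,B\bigr)L_h=\mathcal{Q}_{p'}(T)\,L_h,
\]
that is, $\mathcal{Q}_p(T)=L_h^{-1}\mathcal{Q}_{p'}(T)L_h$. Similar operators are invertible simultaneously, so $p\in\sigma_F(T)\Longleftrightarrow p'\in\sigma_F(T)$, which is the desired axial symmetry.

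\textbf{Main obstacle.} The delicate point is justifying that $L_h$ commutes with each $T_\ell$ (and hence with $A$ and $B$). This rests on the standing convention, parallel to the Clifford-algebra setting earlier in the paper, that the components $T_0,\dots,T_3$ act as real operators on the ``scalar part'' of the bilateral module $V$, i.e.\ they commute with left quaternionic multiplication. Once this structural hypothesis is spelled out, the remainder of the argument is purely algebraic.
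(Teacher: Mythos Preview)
The paper does not supply a proof of this theorem: Section~8 explicitly states the quaternionic results ``without details since they [are] very similar to the Clifford setting for $n=3$,'' and the Clifford analogue (Theorem~\ref{strutturaK}) is itself only recalled from \cite{SlaisComm,CoSaSo}. Your argument---decompose $\mathcal{Q}_s(T)=A+\underline{s}\,B$ with $A,B$ depending only on $(s_0,|\underline{s}|)$, then transport invertibility around the sphere by conjugating with a left-multiplication $L_h$---is essentially the standard proof found in those references, so there is nothing to compare against here beyond confirming that your route matches the literature.

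On the obstacle you single out: it is not an extra convention you need to import, it is already forced by the computations you carried out. The identity $T\overline{T}=T_0^{2}+T_1^{2}+T_2^{2}+T_3^{2}$ requires, for example, that $(iT_1)\circ(jT_2)=ij\,T_1T_2$, i.e.\ that $T_1$ commute with left multiplication by $j$; otherwise the cross terms simply do not cancel. So the components $T_\ell$ must commute with left quaternionic scalars for the very object $\mathcal{Q}_s(T)$ to take the form you (and the paper) use, and once that is granted your intertwining $L_h\,\mathcal{Q}_p(T)=\mathcal{Q}_{p'}(T)\,L_h$ is fully justified. In the Clifford framework of Section~4 this hypothesis is explicit (the $T_j$ lie in $\mathcal{B}(V)$ for the real space $V$ and act componentwise on $V_n$); the quaternionic section tacitly imports the same structure.
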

\begin{theorem}[Compactness of $F$-spectrum]\label{compattezaDS_quat}
\par\noindent
Let
$T\in\mathcal{BC}(V)$. Then
the $F$-spectrum $\sigma_{F} (T)$  is a compact nonempty set.

\end{theorem}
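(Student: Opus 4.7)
My approach mirrors the Clifford-valued analog, Theorem~\ref{compattezaDS}, and splits into (i) compactness and (ii) non-emptiness of $\sigma_F(T)$. For (i), I exploit the commutativity of $T_0,T_1,T_2,T_3$: since these are real-linear operators on $V$ extended trivially on the quaternionic factor, they commute with left multiplication by any $s\in\mathbb{H}$. A direct computation using the quaternionic multiplication table then gives $T+\overline{T}=2T_0$ and $T\overline{T}=T_0^2+T_1^2+T_2^2+T_3^2$, whence
\[
\mathcal{Q}_s(T) = s^2\mathcal{I}-2sT_0+T_0^2+T_1^2+T_2^2+T_3^2 = (s\mathcal{I}-T_0)^2+T_1^2+T_2^2+T_3^2.
\]
For $s\ne 0$ one writes $\mathcal{Q}_s(T)=s^2\mathcal{I}\,\bigl[\mathcal{I}-2s^{-1}T_0+s^{-2}T\overline{T}\bigr]$, and the bracket is Neumann-invertible as soon as $2|s|^{-1}\|T_0\|+|s|^{-2}\|T\overline{T}\|<1$, which holds for $|s|$ larger than an absolute multiple of $\|T\|$. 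Hence $\sigma_F(T)$ is bounded. Closedness is automatic: $s\mapsto\mathcal{Q}_s(T)$ is polynomial in $s$ and the set of invertible operators in $\mathcal{B}(V\otimes\mathbb{H})$ is open, so $\rho_F(T)$ is the continuous preimage of an open set. Together these give compactness.

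For (ii) I argue by contradiction. If $\sigma_F(T)=\emptyset$, then the left $SC$-resolvent operator $S_{C,L}^{-1}(s,T)=(s\mathcal{I}-\overline{T})\mathcal{Q}_s(T)^{-1}$ is defined on the whole of $\mathbb{H}$. By the quaternionic analog of Proposition~\ref{Laplacian_smonogenic} at $h=0$, the function $s\mapsto S_{C,L}^{-1}(s,T)$ is right slice regular on $\rho_F(T)$, hence under our hypothesis entire right slice regular on $\mathbb{H}$. The explicit expression also yields $\|S_{C,L}^{-1}(s,T)\|\to 0$ as $|s|\to\infty$. Fixing any $\phi\in V'$ and $v\in V\otimes\mathbb{H}$, the scalar map $s\mapsto\phi\bigl(S_{C,L}^{-1}(s,T)v\bigr)$ is then entire right slice regular and vanishes at infinity, so by the Liouville theorem for slice regular functions it is identically zero. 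Hahn-Banach then gives $S_{C,L}^{-1}(s,T)\equiv 0$, which contradicts the left $SC$-resolvent identity $S_{C,L}^{-1}(s,T)s-TS_{C,L}^{-1}(s,T)=\mathcal{I}$, as it would force $\mathcal{I}=0$.

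The principal obstacle is the non-emptiness step. The compactness arguments are soft and amount to continuity of $s\mapsto\mathcal{Q}_s(T)$ together with a Neumann-series estimate; by contrast, non-emptiness requires assembling several ingredients in sequence: right slice regularity of $s\mapsto S_{C,L}^{-1}(s,T)$ on $\rho_F(T)$, its norm decay at infinity, the Banach-space-valued Liouville theorem for slice regular functions (reduced via Hahn-Banach to the scalar case), and finally the $SC$-resolvent identity to produce $\mathcal{I}=0$. Each ingredient is standard in the slice-regular operator theory, but they must be chained together carefully.
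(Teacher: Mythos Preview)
The paper does not actually prove this theorem: at the start of Section~8 the authors announce that ``we will state the main results related with the quaternionic $F$-functional calculus without details since they are very similar to the Clifford setting for $n=3$'', and the Clifford analog (Theorem~\ref{compattezaDS}) is itself only quoted from \cite{SlaisComm,CoSaSo} without proof. So there is no ``paper's own proof'' to compare against; your argument is supplying exactly the details the authors chose to omit.

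Your outline is the standard one and is essentially correct. A couple of small points are worth tightening. First, in the quaternionic section $V$ is already a two-sided quaternionic Banach space, not a real space to be tensored; writing $V\otimes\mathbb{H}$ and taking $\phi\in V'$ as if $V$ were real is a notational mismatch. You should either use a continuous right $\mathbb{H}$-linear functional $\phi$ on $V$ (invoking the quaternionic Hahn--Banach theorem), or, more in the spirit of the paper, view $V$ as a real Banach space and $T_0,\ldots,T_3$ as bounded $\mathbb{R}$-linear maps, which makes your $V\otimes\mathbb{H}$ description literal. Second, when you pair with $\phi$ and $v$, make sure the slice regularity passes through: since $S_{C,L}^{-1}(s,T)$ is \emph{right} slice regular in $s$, the scalar function $s\mapsto\langle\phi,S_{C,L}^{-1}(s,T)v\rangle$ is right slice regular provided the pairing is $\mathbb{R}$-bilinear (or $\phi$ is left $\mathbb{H}$-linear), so that the $I$ on the right of $\partial_v S$ can be pulled outside. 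This is exactly the mechanism the paper uses in the proof of Theorem~4.6, and once stated correctly your Liouville/Hahn--Banach contradiction goes through.
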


\begin{definition}[$F$-resolvent operators]
Let
$T\in\mathcal{BC}(V)$. For $s\in \rho_F(T)$
we define the left $F$-resolvent operator as
$$
F^L(s,T):=-4(s\mathcal{I}-\overline{ T})(s^2\mathcal{I}-s(T+\overline{T})+T\overline{T})^{-2},
$$
and the right $F$-resolvent operator as
$$
F^R(s,T):=-4(s^2\mathcal{I}-s(T+\overline{T})+T\overline{T})^{-2}(s\mathcal{I}-\overline{ T}).
$$
\end{definition}
The definition of $T$-admissible set $U$ and of  locally  left (resp. right) slice regular functions  on the $F$-spectrum
$\sigma_{F}(T)$ can be obtained by rephrasing  Definition \ref{def3.9_mondsa}.
\\
We will denote by $\mathcal{SR}^L_{\sigma_{F}(T)}$ (resp. right $\mathcal{SR}^R_{\sigma_{F}(T)}$)  the set of locally left (resp. right) slice regular functions on $\sigma_{F}(T)$.

\begin{definition}[The quaternionic $F$-functional calculus for bounded operators]
Let $T\in\mathcal{BC}(V)$ and set $ds_I=ds/I$ .
   We define the formulations of the  quaternionic $F$-functional calculus as
\begin{equation}\label{DEFinteg311REG}
\breve{f}(T):=\frac{1}{2\pi}\int_{\pp(U\cap \mathbb{C}_I)} F^L(s,T) \, ds_I\, f(s),\ \ \ \ \ f\in \mathcal{SR}^L_{\sigma_{F}(T)},
\end{equation}
and
\begin{equation}\label{DEFinteg311RIGHTREG}
\breve{f}(T):=\frac{1}{2\pi}\int_{\pp(U\cap \mathbb{C}_I)} f(s) \, ds_I\, F^R(s,T),\ \ \ \ \ f\in \mathcal{SR}^R_{\sigma_{F}(T)},
\end{equation}
where $U$ is $T$-admissible.
\end{definition}

\begin{theorem}[The quaternionic $F$-resolvent equation]
Let $T\in\mathcal{BC}(V)$. Then for $p,s\in \rho_F(T)$ the following equation holds
\begin{multline*}
 F^R(s,T)S_{C,L}^{-1}(p,T)+S_{C,R}^{-1}(s,T)F^L(p,T)+\frac{1}{4}\Big( sF^R(s,T)F^L(p,T)p
 \\
 -sF^R(s,T)TF^L(p,T) -F^R(s,T)TF^L(p,T)p+F^R(s,T)T^2F^L(p,T)\Big)
 \\
 = \left[ \left( F^R(s,T) - F^L(p,T)\right)p-\bar{s}\left( F^R(s,T) - F^L(p,T)\right) \right](p^2 - 2s_0p + |s|^2)^{-1}.
\end{multline*}
where the quaternionic $SC$-resolvent operators are defined as
\begin{equation}
S_{C,L}^{-1}(s,T):=(s\mathcal{I}- \overline{T})(s^2\mathcal{I}-s(T+\overline{T})+T\overline{T})^{-1},
\end{equation}
and
\begin{equation}
S_{C,R}^{-1}(s,T):=(s^2\mathcal{I}-s(T+\overline{T})+T\overline{T})^{-1}(s\mathcal{I}- \overline{T}).
\end{equation}
\end{theorem}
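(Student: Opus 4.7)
The plan is to transport the proof of the Clifford $n=3$ $F$-resolvent equation (Subsection 7.1) to the quaternionic setting, since the quaternionic $F$-kernels have the same form as the Clifford $F_3$-kernels (with $\gamma_3=-4$). First I would establish the quaternionic $SC$-resolvent equation
\begin{multline*}
S_{C,R}^{-1}(s,T)S_{C,L}^{-1}(p,T) = \bigl((S_{C,R}^{-1}(s,T) - S_{C,L}^{-1}(p,T))p \\
- \bar s(S_{C,R}^{-1}(s,T) - S_{C,L}^{-1}(p,T))\bigr)(p^2 - 2s_0p + |s|^2)^{-1},
\end{multline*}
by exactly the algebraic manipulation of Theorem \ref{RLRESOLVEQSCCALSC}, which goes through because the components $T_0,T_1,T_2,T_3$ commute.

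Next, I would multiply this identity on the left by $-4\mathcal{Q}_s(T)$ and, separately, on the right by $-4\mathcal{Q}_p(T)$. Using the commutativity of the components of $T$, one has
$$
-4\,\mathcal{Q}_s(T)\,S_{C,R}^{-1}(s,T) = F^R(s,T), \qquad S_{C,L}^{-1}(p,T)\,(-4)\,\mathcal{Q}_p(T) = F^L(p,T),
$$
and adding the two resulting identities produces an $SC$ cross-term
$$
S_{C,R}^{-1}(s,T)(-4)\mathcal{Q}_p(T) - (-4)\mathcal{Q}_s(T)S_{C,L}^{-1}(p,T) = -4\,\mathcal{Q}_s(T)(s-p)\mathcal{Q}_p(T).
$$
Using the scalar identity $sp-p^2-|s|^2+\bar s p=-(p^2-2s_0p+|s|^2)$ and the fact that $\mathcal{Q}_s(T)$ commutes with scalar polynomials in $p$ and $s$, this cross-term collapses, after multiplication by $(p^2-2s_0p+|s|^2)^{-1}$, to $4\,\mathcal{Q}_s(T)\mathcal{Q}_p(T)$. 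Thus one obtains the intermediate identity
\begin{multline*}
F^R(s,T)S_{C,L}^{-1}(p,T) + S_{C,R}^{-1}(s,T)F^L(p,T) - 4\,\mathcal{Q}_s(T)\mathcal{Q}_p(T) \\
= \bigl[(F^R(s,T)-F^L(p,T))p - \bar s(F^R(s,T)-F^L(p,T))\bigr](p^2-2s_0p+|s|^2)^{-1}.
\end{multline*}

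To eliminate the term $\mathcal{Q}_s(T)\mathcal{Q}_p(T)$ in favour of $F^R$ and $F^L$, I would invoke the quaternionic versions of the $F$-resolvent equations of Theorem \ref{FRE}, namely
$$
sF^R(s,T) - F^R(s,T)T = -4\,\mathcal{Q}_s(T), \qquad F^L(p,T)p - TF^L(p,T) = -4\,\mathcal{Q}_p(T),
$$
both of which are immediate from the definitions. Multiplying these two and expanding the product yields
$$
16\,\mathcal{Q}_s(T)\mathcal{Q}_p(T) = sF^R(s,T)F^L(p,T)p - sF^R(s,T)TF^L(p,T) - F^R(s,T)TF^L(p,T)p + F^R(s,T)T^2F^L(p,T),
$$
and substituting this into the intermediate identity gives the stated equation.

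The main obstacle is the bookkeeping in the second step: verifying that the $SC$ cross-term indeed collapses to $4\,\mathcal{Q}_s(T)\mathcal{Q}_p(T)$. This collapse is where the commutativity hypothesis on the components of $T$ enters in an essential way, since it is what allows $\mathcal{Q}_s(T)$ and $\mathcal{Q}_p(T)$ to be moved across the factors $(s\mathcal{I}-\bar T)$ and $(p\mathcal{I}-\bar T)$ and across the relevant scalar polynomials. Once this identity is verified, the rest is mechanical substitution, and the calculation parallels the Clifford $n=3$ case line by line.
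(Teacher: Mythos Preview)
Your proposal is correct and follows exactly the paper's approach: the paper gives no separate proof in the quaternionic case but refers to the Clifford $n=3$ argument of Section~7.1, which proceeds by multiplying the $SC$-resolvent equation on the left by $\gamma_3\mathcal{Q}_s(T)$ and on the right by $\gamma_3\mathcal{Q}_p(T)$, adding, collapsing the cross-term to $-\gamma_3\mathcal{Q}_s(T)\mathcal{Q}_p(T)$, and then substituting the product of the left and right $F$-resolvent equations for $\gamma_3^2\mathcal{Q}_s(T)\mathcal{Q}_p(T)$. Your outline reproduces this line by line with $\gamma_3=-4$.
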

As a consequence of the quaternionic $F$-resolvent equations we can study the Riesz projectors associated with the quaternionic $F$-functional calculus. We have, as a consequence of Theorem \ref{PTcomFCAL}, in this quaternionic version and of the quaternionic $F$-resolvent equation:

\begin{theorem}
Let $T\in\mathcal{BC}(V)$.
Let $\sigma_F(T)= \sigma_{1F}(T)\cup \sigma_{2F}(T)$,
with ${\rm dist}\,( \sigma_{1F}(T),\sigma_{2F}(T))>0$. Let $U_1$ and
$U_2$ be two $T$-admissible sets such that  $\sigma_{1F}(T) \subset U_1$ and $
\sigma_{2F}(T)\subset U_2$,  with $\overline{U}_1 \cap\overline{U}_2=\emptyset$. Set
$$
\breve{P}_j:=\frac{C}{2\pi }\int_{\partial (U_j\cap \mathbb{C}_I)}F^L(s,T) \, ds_Is^{2}, \ \ \ \ \ j=1,2,
$$
where $C:=\Delta q^{2}$.
Then, for $j=1,2$, the following properties hold:
\begin{itemize}
\item[(1)] $\breve{P}_j^2=\breve{P}_j,$
\item[(2)]$T\breve{P}_j=\breve{P}_jT $.
\end{itemize}
\end{theorem}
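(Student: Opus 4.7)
The plan is to transport the two proofs from Section 7 (the Clifford $n=3$ case) into the quaternionic setting, since quaternions are essentially $\mathbb{R}_2$ and the operator $\Delta$ in dimension $4$ plays the role of $\Delta^{(n-1)/2}$ with $n=3$. The constant $C = \Delta q^{2}$ replaces $\gamma_3$ (in fact $C = -4$, matching the coefficient appearing in the quaternionic $F$-kernel). Thus the two identities to prove, $\breve{P}_j^2=\breve{P}_j$ and $T\breve{P}_j = \breve{P}_j T$, should follow from exactly the same manipulations used in Theorem \ref{PTcomFCAL} and in the projection theorem at the end of Section 7, combined with the quaternionic $F$-resolvent equation just stated.

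For property (2) I would first record the quaternionic analogue of the left and right $F$-resolvent equations,
\[
F^L(s,T)s - T F^L(s,T) = C\, \mathcal{Q}_s(T), \qquad s F^R(s,T) - F^R(s,T) T = C\, \mathcal{Q}_s(T),
\]
which is Theorem \ref{FRE} specialized to $n=3$. Then, mimicking the first part of the proof of Theorem \ref{PTcomFCAL}, I would multiply $\breve{P}_j$ by $T$ on the left, slide $T$ inside the integral, apply the left $F$-resolvent equation to write $T F^L(s,T) = F^L(s,T)s - C\,\mathcal{Q}_s(T)$, and identify the result with the quaternionic analogue of $\breve{T}_j$. The same computation from the right (using the right $F$-resolvent expression for $\breve{P}_j$, which is legitimate since $\breve{P}_j$ can be written using $F^R$ as in Section 4) yields the same operator, giving $T\breve{P}_j = \breve{P}_j T$.

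For property (1) I would follow the projector proof at the end of Section 7 verbatim. First I would establish the quaternionic $F$-resolvent equation (quoted above) and multiply it by $s$ on the left and $p$ on the right; then integrate in $s$ over $\partial(U_j\cap\mathbb{C}_I)$ (with a slightly enlarged contour $U_j'$ containing $\overline{U_j}$, analogous to $G_1 \subset G_2$ in Section 7) and in $p$ over $\partial(U_j\cap\mathbb{C}_I)$. The single product term $s^2 F^R(s,T) F^L(p,T) p^2$ will, after division by $C$ and the $(2\pi)^2$ factor, produce $\breve{P}_j^{\,2}$. The remaining mixed terms containing $T$, $T^2$ as well as the $SC$-resolvent terms, must be shown to vanish after integration, via the quaternionic Fueter-regularity argument underlying Lemma \ref{lemmong} (i.e.\ the monogenic/Fueter functional calculus, \cite{jefferies}, together with the fact that $\Delta q \equiv 0$ and $\Delta q^{2} \equiv C$ is constant). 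The right-hand side will then simplify; applying the quaternionic analogue of Lemma \ref{Lemma321} to the integral involving $(p^2-2s_0p+|s|^2)^{-1}$ collapses the double integral to the single integral defining $\breve{P}_j$, yielding $\breve{P}_j^{\,2} = \breve{P}_j$.

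The main obstacle, as in Section 7, is the bookkeeping of the spurious terms produced by the mixed products $sF^R(s,T) T^k F^L(p,T) p^\ell$ when one eliminates them by Cauchy-type vanishing arguments on $\partial(U_j\cap\mathbb{C}_I)$. This requires both the quaternionic version of Lemma \ref{lemmong} (which leans on the Fueter functional calculus to commute the contour integral past the operator-valued kernel) and the quaternionic analogue of Lemma \ref{Lemma321} applied with $f(p)=p$ and with a suitable intrinsic slice regular function, so that after all cancellations the surviving contribution is exactly $\breve{P}_j$. Everything else is a direct translation of Section 7 with the constants $\gamma_3$ and the monogenic kernel replaced by $C$ and the Fueter kernel.
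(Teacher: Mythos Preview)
Your proposal is correct and matches the paper's own approach: the paper does not give a separate detailed proof of this quaternionic theorem but simply states it as a consequence of the quaternionic version of Theorem \ref{PTcomFCAL} (for property (2)) together with the quaternionic $F$-resolvent equation and the Section 7 projector argument (for property (1)), which is exactly what you outline. One minor remark: the identification ``quaternions are essentially $\mathbb{R}_2$'' is true as algebras, but the translation from Section 7 is really between paravectors in $\mathbb{R}^{4}$ (the $n=3$ Clifford setting) and $\mathbb{H}$, both carrying the four-dimensional Laplacian; also note that $C=\Delta q^{2}=\gamma_3=-4$, so the normalization in the statement should be read as $\gamma_3^{-1}$ to match \eqref{pigei}.
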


\section{A proof of Lemma \ref{Pn=gamman}}
For the proof, we need the following identitiy.
\begin{proposition}
Let $m\geq0$. Then the following identity holds
\begin{equation}
\label{binom_id}\sum_{k=0}^{j}(-1)^k\binom{m+k-1}{k}\binom{m+j}{m+k} = 1\qquad j=0,1,2,\ldots.\\
\end{equation}
\end{proposition}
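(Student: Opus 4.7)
The plan is to reduce the identity to a single application of the Vandermonde convolution via the standard upper-negation trick. First, I would rewrite the first binomial coefficient using upper negation:
\[
(-1)^k\binom{m+k-1}{k} = \binom{-m}{k},
\]
which is valid for every integer $m\geq 0$ and $k\geq 0$. Substituting this into the left-hand side of \eqref{binom_id} eliminates the alternating sign and turns the expression into a straightforward convolution.

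Next, I would rewrite the second binomial coefficient using the symmetry
\[
\binom{m+j}{m+k} = \binom{m+j}{j-k},
\]
which holds because the two lower entries sum to $m+j$. After these two substitutions, the sum becomes
\[
\sum_{k=0}^{j}\binom{-m}{k}\binom{m+j}{j-k}.
\]
Since $\binom{-m}{k}=0$ for $k<0$ and $\binom{m+j}{j-k}=0$ for $k>j$, this is exactly the Vandermonde convolution $\sum_{k}\binom{-m}{k}\binom{m+j}{j-k}$, which equals $\binom{-m+(m+j)}{j} = \binom{j}{j} = 1$. This yields \eqref{binom_id}.

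There is essentially no obstacle here: the only subtlety is justifying Vandermonde's identity with a negative upper index $-m$, but the identity $\sum_{k}\binom{a}{k}\binom{b}{n-k} = \binom{a+b}{n}$ is valid for arbitrary (in particular negative integer) values of $a$ as long as $n$ is a nonnegative integer, since both sides are polynomial identities in $a$ and $b$. An alternative, if one prefers to avoid negative upper indices, would be a short induction on $j$ using Pascal's rule, but the Vandermonde approach gives the cleanest one-line proof once the upper-negation rewriting is in place.
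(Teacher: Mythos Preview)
Your proof is correct and considerably more direct than the one in the paper. The paper proceeds by creative telescoping in the style of Zeilberger's algorithm: it writes down an explicit five-term recurrence satisfied by the summand $\Lambda(j,k)=(-1)^k\binom{m+k-1}{k}\binom{m+j}{m+k}$, sums over $k$, and deduces that $S(j+2)=S(j+1)$, whence the result follows from $S(0)=1$. Your argument instead rewrites the summand via upper negation and symmetry as $\binom{-m}{k}\binom{m+j}{j-k}$ and finishes with a single application of the Vandermonde convolution. Both are valid; the paper's method is essentially automatic (the recurrence can be found and certified by computer) but opaque, while yours identifies the sum as a classical convolution and explains \emph{why} the answer is $1$, at the mild cost of invoking Vandermonde with a negative upper index, which you correctly justify as a polynomial identity.
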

\begin{proof}
Let
$$
\Lambda(j,k):=(-1)^k\binom{m+k-1}{k}\binom{m+j}{m+k}.
$$
It is easy to check that the following recurrence relation is satisfied
\begin{align*}
-\frac{m+j+1}{j+2}\Lambda(j,k) + \frac{m+j+1}{j+2}\Lambda(j+1,k) + \frac{m+j+1}{j+2}\Lambda(j,k+1)&\\
-\frac{m+2j+3}{j+2}\Lambda(j+1,k+1) + \Lambda(j+2,k+1)&=0
\end{align*}
for all $j,k\in\mathbb{Z}$. Note that $\Lambda(j,k) = 0$ if $k<0$ or $k>j$. Thus, by taking the sum over all $k\in\mathbb{Z}$, we obtain
\begin{align*}
-\frac{m+j+1}{j+2}\sum_{k=0}^{j}\Lambda(j,k) + \frac{m+j+1}{j+2}\sum_{k=0}^{j+1}\Lambda(j+1,k) + \frac{m+j+1}{j+2}\sum_{k=-1}^{j-1}\Lambda(j,k+1)&\\
-\frac{m+2j+3}{j+2}\sum_{k=-1}^{j}\Lambda(j+1,k+1)+\sum_{k=-1}^{j+1}\Lambda(j+2,k+1)&=0.
\end{align*}
If we define $S(j):=\sum_{k=0}^{j}(-1)^k\binom{m+k-1}{k}\binom{m+j}{m+k}$, this equation turns into
\begin{align*}
-\frac{m+j+1}{j+2}S(j) + \frac{m+j+1}{j+2}S(j+1) + \frac{m+j+1}{j+2}S(j)&\\
-\frac{m+2j+3}{j+2}S(j+1)+S(j+2)&=0.
\end{align*}
which simplifies to
\begin{equation*}
-S(j+1) + S(j+2) = 0.
\end{equation*}
Thus, as $S(0) = 1$, by induction, we obtain \eqref{binom_id}.

\end{proof}

\begin{proof}[{\bf Proof of Lemma \ref{Pn=gamman}}]
Let $x\in\mathbb{R}^{n+1}$ and let $U\subset\mathbb{R}^{n+1}$ be a ball centered at $0$ such that $x\in U$. By \eqref{Fueter}, for any $I\in\mathbb{S}$, we have
\begin{align*}
\mathcal{P}_{n-1,n}(x)& = \frac{1}{2\pi}\int_{\partial(U\cap\mathbb{C}_{I})} F_n^{L}(s,x)ds_I s^{n-1}=\\
&= \frac{1}{2\pi}\int_{\partial(U\cap\mathbb{C}_{I})} \gamma_n(s-\bar{x})(s^2 - 2\mathrm{Re}(x)s + \vert x\vert^2)^{-\frac{n+1}{2}}ds_I s^{n-1}.
\end{align*}
Let $I = I_x$ and $m=\frac{n-1}{2}$. Then $s$ and $x$ commute and by applying the Residue theorem we obtain
\begin{equation*}
\mathcal{P}_{n-1,n}(x) = \frac{-I\gamma_n}{2\pi}\int_{\partial(U\cup\mathbb{C}_I)} \frac{1}{(s-x)^{m+1}} \frac{1}{(s-\bar{x})^{m}}s^{2m} ds = \gamma_{n}(\mathrm{Res}_x(f) + \mathrm{Res}_{\bar{x}}(f)),
\end{equation*}
where $f(s) := \frac{1}{(s-x)^{m+1}} \frac{1}{(s-\bar{x})^{m}}s^{2m} $. It is easy to see, that
\begin{gather*}
\frac{\partial^k}{\partial s^k} s^{2m} = \frac{(2m)!}{(2m-k)!}s^{2m-k},\\
\frac{\partial^k}{\partial s^k}\frac{1}{(s-\bar{x})^m}=(-1)^k\frac{(m+k-1)!}{(m-1)!}\frac{1}{(s-\bar{x})^{m+k}},\\
\frac{\partial^k}{\partial s^k}\frac{1}{(s-x)^{m+1}}=(-1)^k\frac{(m+k)!}{m!}\frac{1}{(s-x)^{m+1+k}}.
\end{gather*}
Thus, we obtain
\begin{align*}
\mathrm{Res}_{x}(f) &= \frac{1}{m!}\lim_{s\to x}\frac{\partial^m}{\partial s^m}\left((s-x)^{m+1}f(s)\right) = \frac{1}{m!}\lim_{s\to x}\frac{\partial^m}{\partial s^m}\left(\frac{1}{(s-\bar{x})^{m}}s^{2m}\right)
\\
&= \frac{1}{m!}\lim_{s\to x}\sum_{k=0}^{m}\binom{m}{k}\left(\frac{\partial^k}{\partial s^k}\frac{1}{(s-\bar{x})^{m}}\right)\left(\frac{\partial^{m-k}}{\partial s^{m-k}} s^{2m} \right)
 \\
&= \frac{1}{m!}\lim_{s\to x}\sum_{k=0}^m \binom{m}{k}(-1)^k\frac{(m+k-1)!}{(m-1)!}\frac{1}{(s-\bar{x})^{m+k}}\frac{(2m)!}{(m+k)!}s^{m+k}
 \\
&= \sum_{k=0}^{m}(-1)^k\binom{2m}{m-k}\binom{m+k-1}{k}\frac{x^{m+k}}{(x-\bar{x})^{m+k}}
\\
&= \frac{1}{(x-\bar{x})^{2m}}\sum_{k=0}^{m}(-1)^k\binom{2m}{m-k}\binom{m+k-1}{k}x^{m+k}(x-\bar{x})^{m-k}.
\end{align*}
But as
\begin{equation*}
(x-\bar{x})^{m-k} = \sum_{j=0}{m-k}\binom{m-k}{j}x^j(-\bar{x})^{m-k-j},
\end{equation*}
we have
\begin{align*}
\mathrm{Res}_{x}(f) &= \frac{1}{(x-\bar{x})^{2m}}\sum_{k=0}^{m}\sum_{j=0}^{m-k}(-1)^k\binom{2m}{m-k}
\binom{m+k-1}{k}\binom{m-k}{j}x^{m+k+j}(-\bar{x})^{m-k-j}\\
&= \frac{1}{(x-\bar{x})^{2m}}\sum_{k=0}^{m}\sum_{j=k}^{m}(-1)^k
\binom{2m}{m-k}\binom{m+k-1}{k}\binom{m-k}{j-k}x^{m+j}(-\bar{x})^{m-j}
\\
&= \frac{1}{(x-\bar{x})^{2m}}\sum_{j=0}^{m}\sum_{k=0}^{j}(-1)^k\binom{2m}{m-k}
\binom{m+k-1}{k}\binom{m-k}{j-k}x^{m+j}(-\bar{x})^{m-j}.
\end{align*}
For the coefficients, we obtain
\begin{align*}
\sum_{k=0}^{j}(-1)^k\binom{2m}{m-k}&\binom{m+k-1}{k}\binom{m-k}{j-k}
\\
&= \sum_{k=0}^{j}(-1)^k \frac{(2m)!}{(m-k)!(m+k)!}\frac{(m+k-1)!}{k!(m-1)!}\frac{(m-k)!}{(j-k)!(m-j)!}
\\
&
=\frac{(2m)!}{(m-j)!(m+j)!}\sum_{k=0}^{j}(-1)^k\frac{(m+k-1)!}{k!(m-1)!}\frac{(m+j)!}{(j-k)!(m+k)!}
\\
&
=\binom{2m}{m+j}\sum_{k=0}^{j}(-1)^k\binom{m+k-1}{k}\binom{m+j}{m+k} =\binom{2m}{m+j},
\end{align*}
where the last equation follows from \eqref{binom_id}. Therefore, we finally get
\begin{equation*}
\mathrm{Res}_{x}(f) = \frac{1}{(x-\bar{x})^{2m}}\sum_{j=0}^{m}\binom{2m}{m+j}x^{m+j}\bar{x}^{m-j} = \frac{1}{(x-\bar{x})^{2m}}\sum_{j=m}^{2m}\binom{2m}{j}x^{j}\bar{x}^{2m-j}.
\end{equation*}
All the same, we have
\begin{align*}
\mathrm{Res}_{\bar{x}}(f) & = \frac{1}{(m-1)!}\lim_{s\to\bar{x}}\frac{\partial^{m-1}}{\partial s^{m-1}}\left((s-\bar{x})^m f(s)\right)
\\
&
=\frac{1}{(m-1)!}\lim_{s\to\bar{x}}\frac{\partial^{m-1}}{\partial s^{m-1}}\left(\frac{1}{(s-x)^{m+1}}s^{2m}\right)
\\
&
=\frac{1}{(m-1)!}\lim_{s\to\bar{x}}\sum_{k=0}^{m-1}\binom{m-1}{k}\left(\frac{\partial^k}{\partial s^{k}}\frac{1}{(s-x)^{m+1}}\right)\left(\frac{\partial^{m-1-k}}{\partial s^{m-1-k}}s^{2m}\right)
\end{align*}
and also
\begin{align*}
\mathrm{Res}_{\bar{x}}(f) &
=\frac{1}{(m-1)!}\lim_{s\to\bar{x}}
\sum_{k=0}^{m-1}\binom{m-1}{k}(-1)^k\frac{(m+k)!}{m!}\frac{1}{(s-x)^{m+1+k}}\frac{(2m)!}{(m+k+1)!}s^{m+k+1}
\\
&
= \sum_{k=0}^{m-1}(-1)^k \binom{2m}{m-k-1}\binom{m+k}{k} \frac{\bar{x}^{m+k+1}}{(\bar{x}-x)^{m+1+k}}
\\
&
=\frac{1}{(x-\bar{x})^{2m}} \sum_{k=0}^{m-1}(-1)^k \binom{2m}{m-k-1}\binom{m+k}{k}(-\bar{x})^{m+k+1}(x-\bar{x})^{m-k-1}.
\end{align*}
As we have
\begin{equation*}
(x-\bar{x})^{m-k-1} = \sum_{j=0}^{m-k-1}\binom{m-k-1}{j}x^j(-\bar{x})^{m-k-1-j},
\end{equation*}
this equals
\begin{align*}
\mathrm{Res}_{\bar{x}}(f) &= \frac{1}{(x-\bar{x})^{2m}} \sum_{k=0}^{m-1}\sum_{j=0}^{m-k-1}(-1)^k \binom{2m}{m-k-1}\binom{m+k}{k}\binom{m-k-1}{j}x^j(-\bar{x})^{2m-j}
\\
&
= \frac{1}{(x-\bar{x})^{2m}} \sum_{j=0}^{m-1}\sum_{k=0}^{m-j-1}(-1)^k \binom{2m}{m-k-1}\binom{m+k}{k}\binom{m-k-1}{j}x^j(-\bar{x})^{2m-j}.
\end{align*}
For the coefficients, we again obtain
\begin{align*}
\sum_{k=0}^{m-j-1}(-1)^k &\binom{2m}{m-k-1}\binom{m+k}{k}\binom{m-k-1}{j}
\\
&
= \sum_{k=0}^{m-j-1}(-1)^k \frac{(2m)!}{(m-k-1)!(m+k+1)!}\frac{(m+k)!}{k!m!}\frac{(m-k-1)!}{j!(m-k-j-1)!}
\\
&
=\frac{2m!}{j!(2m-j)!} \sum_{k=0}^{m-j-1}(-1)^k \frac{(2m-j)!}{(m+k+1)!(m-k-j-1)!}\frac{(m+k)!}{k!m!}
\\
&
=\binom{2m}{j}\sum_{k=0}^{m-j-1}(-1)^k\binom{2m-j}{m+k+1}\binom{m+k}{k} = \binom{2m}{j},
\end{align*}
where the last equation again follows from \eqref{binom_id}.
Thus, we finally obtain
\begin{equation*}
\mathrm{Res}_{\bar{x}}(f) = \frac{1}{(x-\bar{x})^{2m}}\sum_{j=0}^{m-1}\binom{2m}{j}x^j(-\bar{x})^{2m-j}.
\end{equation*}
Putting all this together, we get
\begin{align*}
\mathcal{P}_{n-1,n}(x) &= \gamma_n\left(\mathrm{Res}_x(f) + \mathrm{Res}_{\bar{x}}(f)\right)
\\
&
=\gamma_n\left(\frac{1}{(x-\bar{x})^{2m}}\sum_{j=m}^{2m}\binom{2m}{j}x^{j}\bar{x}^{2m-j} + \frac{1}{(x-\bar{x})^{2m}}\sum_{j=0}^{m-1}\binom{2m}{j}x^j(-\bar{x})^{2m-j}\right)
\\
&
=\frac{\gamma_n}{(x-\bar{x})^{2m}}\sum_{j=0}^{2m}\binom{2m}{j}x^{j}\bar{x}^{2m-j}=
\frac{\gamma_n}{(x-\bar{x})^{2m}}(x-\bar{x})^{2m}=\gamma_n.
\end{align*}
\end{proof}

\end{document}